\newtheorem*{rep@theorem}{\rep@title}
\newcommand{\newreptheorem}[2]{%
\newenvironment{rep#1}[1]{%
 \def\rep@title{#2 \ref{##1}}%
 \begin{rep@theorem}}%
 {\end{rep@theorem}}}
\newenvironment{rezabib}
  {\bibdiv\biblist\setupbib}
  {\endbiblist\endbibdiv}
\def\setupbib{\catcode`@=\active}
\def\gatherkey#1#2{\gatherkeyaux{#1}#2\gatherkeyaux}
\def\gatherkeyaux#1#2,#3\gatherkeyaux{\bib{#2}{#1}{#3}}
\newtheorem{thm}{Theorem}[section]
\newtheorem{thrm}[thm]{Theorem}
\newtheorem{prop}[thm]{Proposition}
\newtheorem{lem}[thm]{Lemma}
\newtheorem{cor}[thm]{Corollary}
\theoremstyle{definition}
\newtheorem{remarks}[thm]{Remarks}
\newtheorem{remark}[thm]{Remark}
\DeclareMathOperator{\ld}{\rm Ld}
\def\imod#1{\allowbreak\mkern7mu({\operator@font mod}\,\,#1)}
\numberwithin{equation}{section}
\begin{document}

\title[Value-Distribution of Logarithmic Derivatives of Quadratic Twists]{Value-Distribution of Logarithmic Derivatives of Quadratic Twists of Automorphic  $L$-functions}

\thanks{Research of the both authors is partially supported by NSERC}

\date{\today}

\keywords{\noindent value-distribution, logarithmic derivatives of $L$-functions, automorphic representations, quadratic twists}

\subjclass[2010]{11R42, 11M38, 11M41.}

\author{Amir Akbary}
\author{Alia Hamieh}

\address{Department of Mathematics and Computer Science \\
        University of Lethbridge \\
        Lethbridge, AB T1K 3M4 \\
        Canada}
        
        \address{Department of Mathematics and Statistics \\
        University of Northern British Columbia \\
        Prince George, BC V2N4Z9 \\
        Canada}

\email{amir.akbary@uleth.ca}
\email{alia.hamieh@unbc.ca}

\begin{abstract}

Let $d\in\mathbb{N}$, and let $\pi$ be a fixed cuspidal automorphic representation of $\mathrm{GL}_{d}(\mathbb{A}_{\mathbb{Q}})$ with unitary central character. We determine the limiting distribution of the family of values $-\frac{L'}{L}(1+it,\pi\otimes\chi_D)$ as $D$ varies over fundamental discriminants. 
  Here, $t$ is a fixed real number and $\chi_D$ is the real character associated with $D$. We establish an upper bound on the discrepancy in the convergence of this family to its limiting distribution. As an application of this result, we obtain an upper bound on the small values of $\left|\frac{L'}{L}(1,\pi\otimes\chi_D)\right|$ when $\pi$ is self-dual.

\end{abstract}

\maketitle

\section{Introduction}

Bohr and Jessen  showed in \cite{BJ} that the values $\log{\zeta(\sigma+it)}$  for a fixed $\sigma>\frac12$ as $t$ varies in $\mathbb{R}$ have a limiting distribution with a continuous density in the complex plane. In \cite{JW}, Jessen and Wintner revisited this problem from a more general perspective using  ideas from probability theory and  Fourier analysis machinery, which  allowed them to reveal detailed information on the distribution function in Bohr-Jessen's theorem (see \cite[Theorem 19]{JW}). In recent years, this line of research was pursued further by many authors studying the distribution of $\log\zeta(\sigma+it)$ in the critical strip (e.g. \cite{Harman-Mat}, \cite{HM}, \cite{lamzouri2} and \cite{Lf}). We focus here on the work of Lamzouri, Lester, and Radziwi\l \l{ }\cite{Lf} in which the authors investigate the discrepancy between the distributions of $\log\zeta(\sigma+it)$ and that of an adequately chosen random series $\log\zeta(\sigma,\mathbb{X})$. More precisely, let $\{\mathbb{X}_p\}_{p\;\text{prime}}$ be a sequence of independent random variables uniformly distributed on the unit circle. Consider the random Euler product $\zeta(\sigma,\mathbb{X})=\prod_{p}\left(1-\mathbb{X}_{p}p^{-\sigma}\right)^{-1}$ which converges almost surely for $\sigma>\frac12$. We can ask whether $\zeta(\sigma,\mathbb{X})$ is a good model for the Riemann zeta function. The authors of \cite{Lf} answer this question affirmatively.

 \begin{thm}\cite[Theorem~1.1]{Lf} Let $\frac12<\sigma<1$ be fixed. Then we have \begin{align*}&\mathbb{D}_{\sigma}(\log{\zeta}; T):=\sup_{\mathcal{R}\subset \mathbb{C}}\left|\mathbb{P}_{T}\left(\log\zeta(\sigma+it)\in \mathcal{R}\right)-\mathbb{P}\left(\log\zeta(\sigma,\mathbb{X})\in \mathcal{R}\right)\right|\ll \frac{1}{(\log T)^{\sigma}}.
\end{align*}
For $\sigma=1$, we have 
\begin{align*}&\mathbb{D}_{1}(\log{\zeta}; T):=\sup_{\mathcal{R}\subset \mathbb{C}}\left|\mathbb{P}_{T}\left(\log\zeta(1+it)\in \mathcal{R}\right)-\mathbb{P}\left(\log\zeta(1,\mathbb{X})\in \mathcal{R}\right)\right|\ll \frac{\log\log T}{\log T}.
\end{align*}
Here $\mathcal{R}$ varies over all rectangles in $\mathbb{C}$ with sides parallel to the axes, and \[\mathbb{P}_{T}\left(f(t)\in \mathcal{R}\right)=\frac{1}{T}\mathrm{meas}\{T\leq t\leq2T:f(t)\in \mathcal{R}\}.\]
\end{thm}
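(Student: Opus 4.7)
The plan is to reduce the supremum over rectangles to a Fourier-analytic estimate via a two-dimensional Beurling-Selberg / Berry-Esseen type inequality. Setting
$$\Phi_T(u)=\mathbb{E}_T\!\left[\exp\!\left(iu_1\,\Re\log\zeta(\sigma+it)+iu_2\,\Im\log\zeta(\sigma+it)\right)\right]$$
and letting $\Phi(u)$ be the analogous characteristic function of $\log\zeta(\sigma,\mathbb{X})$, a standard smoothing argument yields a bound of the shape
$$\mathbb{D}_\sigma(\log\zeta;T)\ll \int_{|u|\le U}\frac{|\Phi_T(u)-\Phi(u)|}{\max(1,|u|)}\,du+\frac{1}{U},$$
for an appropriate cutoff $U=U(T)$. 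The goal is then to choose $U$ as large as possible while keeping the characteristic-function difference small.

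Next, I would approximate $\log\zeta(\sigma+it)$ by a short truncated Dirichlet series $P_X(\sigma+it)=\sum_{p^k\le X}\tfrac{1}{k p^{k(\sigma+it)}}$ on a set of $t$ of nearly full measure. For $\tfrac12<\sigma<1$, a zero-density estimate for $\zeta$ inside the critical strip (à la Selberg) is needed to control the exceptional set; for $\sigma=1$ a contour shift suffices. The random model admits the analogous truncation $P_X(\sigma,\mathbb{X})=\sum_{p^k\le X}\mathbb{X}_p^k/(kp^{k\sigma})$, and by the independence of $\{\mathbb{X}_p\}_p$ its characteristic function factorizes as a product over primes, which can be computed prime-by-prime via Bessel-type integrals.

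The heart of the argument is then to match the two characteristic functions for $|u|\le U$. Expanding $\exp(iu\cdot P_X(\sigma+it))$ as a (long) Dirichlet polynomial in $t$ and applying a mean-value theorem of Montgomery-Vaughan type shows that, provided $X$ is taken substantially smaller than a suitable power of $T$, the $t$-average agrees with the prime-by-prime product defining $\Phi(u)$ up to a manageable error. Optimizing the truncation $X$ against the Fourier cutoff $U$ with both polynomial in $\log T$ then yields the $(\log T)^{-\sigma}$ bound. The main obstacle is the endpoint case $\sigma=1$: because the Euler product sits on the boundary of convergence, the tails of $\log\zeta(1,\mathbb{X})$ decay too slowly to absorb the truncation error cleanly, and one loses an extra $\log\log T$ factor when balancing parameters, which accounts for the weaker bound in that case.
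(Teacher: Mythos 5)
Your proposal captures the right high-level strategy---approximate $\log\zeta(\sigma+it)$ by a short Dirichlet polynomial off a small exceptional set (via a zero-density estimate for $\tfrac12<\sigma<1$, the zero-free region for $\sigma=1$), compare the truncated characteristic function against that of the random model prime-by-prime, and feed the comparison into a Fourier smoothing inequality---but it silently blends two different closing steps and, in the form written, has a gap.

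The cited proof in \cite{Lf} closes with Beurling--Selberg extremal majorants and minorants of rectangle indicators, and that route genuinely \emph{requires} a large deviation estimate for $\log\zeta(\sigma+it)$ to control the contribution outside the range on which the approximants concentrate. Your sketch instead writes a Berry--Esseen-type bound of the shape
\[
\mathbb{D}_\sigma(\log\zeta;T)\ll \int_{|u|\le U}\frac{|\Phi_T(u)-\Phi(u)|}{\max(1,|u|)}\,du+\frac{1}{U},
\]
which avoids large deviations, and is in fact closer to the route the present paper takes (Sadikova's two-dimensional Berry--Esseen inequality, Proposition~\ref{barry-esseen}) than to what \cite{Lf} do. In two dimensions the inequality is not of the one-dimensional form you wrote: the main term is a double integral against $\bigl|\widehat{\Phi_T}(u,v)-\widehat{\Phi}(u,v)\bigr|/|uv|$ with $\widehat{\Phi}(u,v)=\Phi(u,v)-\Phi(u,0)\Phi(0,v)$, there are two auxiliary one-dimensional integrals, and the analogue of your $1/U$ term carries a factor $A_1+A_2$, the suprema of the partial derivatives of the limiting distribution function. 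Your $1/U$ term therefore presupposes that $\log\zeta(\sigma,\mathbb{X})$ has a bounded joint density. That is a genuine extra input: one has to establish (at least polynomial) decay of $\Phi(u,v)$ as $|u|+|v|\to\infty$ to get a bounded smooth density, and separately a second-moment bound on the truncated $\log\zeta(\sigma+it)$ to handle the $1/(uv)$ singularity near the origin after splitting the integration region. Neither appears in your outline, and without them the reduction to ``match characteristic functions for $|u|\le U$'' does not close. (In the present paper these are Proposition~\ref{exponential-decay} and Lemma~\ref{2ndmoment}; \cite{Lf} supply the corresponding large deviation estimate instead.)

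Granting those two inputs, the rest of your plan is sound and aligns with both \cite{Lf} and this paper's adaptation: the Montgomery--Vaughan-style mean value matches the $t$-average of $\exp(iu\cdot P_X(\sigma+it))$ with the Euler product of the random model for $X$ a small power of $T$, and balancing $X$ against $U$, both polynomial in $\log T$, gives the claimed exponent. Your explanation of the $\sigma=1$ case is essentially the right one: since $\sum_{p\le X}p^{-1}\asymp\log\log X$ diverges, the admissible range of $|u|$ drops from $(\log T)^\sigma$ to roughly $\log T/\log\log T$, and the final $1/U$ term yields the extra $\log\log T$.
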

This improves on the result in \cite{Harman-Mat} where the authors prove that for any $\epsilon>0$, we have
 \[\mathbb{D}_{\sigma}(\log{\zeta}; T)\ll \frac{1}{(\log T)^{\frac{4\sigma-2}{21+8\sigma}-\epsilon}},\] provided that $\frac12<\sigma\leq 1$.
Recently Xiao and Zhai extended the result of \cite{Lf}, for $\frac12<\sigma<1$ to the case of $L$-functions attached to Hecke eigenforms of level 1.  

In another direction, Mine \cite{M20} studied the discrepancy $D_s(\log{L_f}; q)$ in the value distribution at a fixed point $s=\sigma+it$ of averages and harmonic averages of the values of logarithm of the $L$-functions $L_f(s)$ attached to the primitive cusp forms $f$ of weight 2 and prime level $q$, as $q\rightarrow \infty$. In \cite[Theorem~1.4]{M20} it is proved that

$$\mathbb{D}_s(\log{L_f}; q)\ll 
\begin{cases}
(\log\log{q})/\log{q}&{if~\sigma>1,}\\
(\log\log{q}) (\log\log\log{q})/\log{q}&{if~\sigma=1,}\\
1/(\log{q})^\sigma&{if~\frac12<\sigma<1,}
\end{cases}$$
generalizing a special case of a 1-dimensional result of Cogdell and Michel \cite[Corollary~1.16]{CM04} and a 1-dimensional discrepancy estimate of Golubeva \cite[Theorem 1]{G06}. The reader is referred to \cite[Equations~(1.16) and (1.17)]{M20} for the exact definition of  
$\mathbb{D}_s(\log{L_f}; q)$.

Inspired by studying the small values of the Euler-Kronecker constants of the cyclotomic fields $\mathbb{Q}(\zeta_q)$, Lamzouri and Languasco \cite{LL21} proved a discrepancy estimate for the distribution of $\frac{L^\prime}{L}(1, \chi)$ where $\chi$ varies over non-trivial Dirichlet characters mod $q$ (prime). By defining a suitable random series $\ld(1, \mathbb{X})$ associated to a certain random sequence 
$\mathbb{X}$ and setting
$$\mathbb{D}_1\left({L_{\chi}^{\prime}}/{L_{\chi}}; q\right)= \sup_{\mathcal{R}\subset{\mathbb{C}}} \left| \frac{1}{q-1} \left|\left\{\chi\neq \chi_0~{\rm mod}~q:~\frac{L^\prime}{L}(1, \chi)\in \mathcal{R}   \right\}   \right|-\mathbb{P} (\ld(1, \mathbb{X})\in \mathcal{R}  \right|,$$
where the supremum is taken over all rectangles of the 
complex plane with sides parallel to the coordinate axes, they proved in \cite[Theorem 1.5]{LL21} that
$$\mathbb{D}_1\left({L_{\chi}^{\prime}}/{L_{\chi}}; q\right) \ll \frac{(\log\log{q})^2}{\log{q}},\quad \text{as}\; q\to\infty;~ q\;\text{prime}.$$

In \cite{Hamieh-Mcclenagan}, Hamieh and Mcclenagan,  determined an asymptotic distribution function for the values $\frac{L'}{L}(\sigma,\chi_D)$ as $D$ varies over all fundamental discriminants $D$, with $|D|\leq N$, for a fixed real number $\frac12<\sigma<1$, removing the dependence on GRH in a result of Mourtada and Murty \cite{M-M}. 
Here $\chi_D=\left(\frac{D}{\cdot}\right)$ is the Kronecker symbol for $D$, and $L(s,\chi_D)$ is the associated Dirichlet $L$-function. 
In addition, in \cite[Theorem 1.3]{Hamieh-Mcclenagan} they proved
$$\mathbb{D}_\sigma\left({L_{\chi_D}^{\prime}}/{L_{\chi_D}}; N\right)\ll \left(\frac{\log\log{N}}{\log{N}}  \right)^\sigma, $$
where $$\mathbb{D}_\sigma \left({L_{\chi_D}^{\prime}}/{L_{\chi_D}}; N\right)=\sup_{z\in \mathbb{R}} \left|\mathbb{P}_N\left(\frac{L^{\prime}}{L}(\sigma, \chi_D) \leq z\right)- \mathbb{P}\left(\ld (\sigma, \mathbb{X}) \leq z\right)
\right|$$
is the discrepancy between the value-distribution of quadratic twists and that of a random series $\ld (\sigma, \mathbb{X})$ attached to a random sequence $\mathbb{X}$ described in the introduction of \cite{Hamieh-Mcclenagan}.
Here
 \begin{align*}&\mathbb{P}_{N}\left(\frac{L'}{L}(\sigma,\chi_{D})\leq z\right)
 =\frac{1}{\left|\mathcal{F}(N)\right|} \left|  \left\{D \in \mathcal{F}(N):~\frac{L'}{L}(\sigma, \chi_D)\leq z \right \}\right|,\end{align*}
where $\mathcal{F}(N)$ denotes the collection of the fundamental discriminants $D$ with $|D|\leq N$. Note that we have (see \cite[page~1017]{GS}) \[|\mathcal{F}(N)|=\frac{6}{\pi^2}N+O\left(N^{\frac12}\right).\]

In this paper, we consider the analogous problem for logarithmic derivatives of quadratic twists of automorphic $L$-functions. Let $\pi$ be a cuspidal automorphic representation of $\mathrm{GL}_{d}(\mathbb{A}_{\mathbb{Q}})$ with unitary central character. Let $L(s, \pi)=\sum_{n=1}^{\infty}a_{\pi}(n)n^{-s}$ be the associated Dirichlet series. In particular, we have 
\begin{equation}
L(s, \pi)=\prod_{p}\prod_{j=1}^d\left(1-\frac{\alpha_{j, \pi}(p)}{p^s}\right)^{-1},\end{equation}
where $\alpha_{j, \pi} (p)$'s are the \emph{Satake parameters} of $\pi$.
Thus, $a_{\pi}(p)={\sum_{j=1}^d\alpha_{j, \pi}(p)}$. By a result of Rudnick and Sarnak \cite{R-S}, we know that 
\begin{equation}
\label{RS-bound}
|\alpha_{j,\pi}(p)|\leq p^{\frac12-\frac{1}{d^2+1}}
\end{equation}
for all $j\in\{1,2,\dots,d\}$. We set 
\begin{equation}
\lambda_{\pi}(n)=\begin{cases}\displaystyle{\sum_{j=1}^d{\alpha_{j, \pi}(p)}^m}&\;\text{if}\;
n=p^m,\\
0&\;\text{otherwise.}\end{cases}
\end{equation}

Corresponding to a representation $\pi$, there is a dual representation $\tilde{\pi}$. The collection of the Satake parameters for $\tilde{\pi}$ coincides with the collection of the complex conjugates of the Satake parameters for $\pi$, and thus $a_{\tilde{\pi}}(n)= \overline{a_{\pi}(n)}$. We call a cuspidal representation $\pi$ \emph{self-dual} if ${\pi}\simeq \tilde{\pi}$.

For $\Re(s)>1$ and a fundamental discriminant $D\in\mathcal{F}(N)$, we set
\begin{equation}
L(s, \pi\otimes \chi_D)=\sum_{n=1}^{\infty}\frac{a_\pi(n)\chi_D(n)}{n^s}
\end{equation}
and
\begin{equation}
-\frac{L^\prime}{L}(s, \pi\otimes \chi_D)=\sum_{n=1}^{\infty} \frac{\Lambda(n)\lambda_\pi(n) \chi_D(n)}{n^s}.
\end{equation}
These functions have meromorphic continuations to the entire complex plane.

Let $\mathcal{R}$ be a rectangle in $\mathbb{C}$ with sides parallel to the coordinate axes. We denote by $\mathbf{1}_{\mathcal{R}}\left(\cdot\right)$ the characteristic function of $\mathcal{R}$. For $t\in \mathbb{R}$ we define
 \begin{align*}&\mathbb{P}_{N}\left(-\frac{L'}{L}(1+it,\pi\otimes\chi_{D})\in \mathcal{R}\right)
 =\frac{1}{\left|\mathcal{F}(N)\right|}\sum_{D\in\mathcal{F}(N)}
\mathbf{1}_{\mathcal{R}}\left(-\frac{L'}{L}(1+it,\pi\otimes\chi_{D})\right)
.\end{align*}
Thus,
$\mathbb{P}_{N}\left(-\tfrac{L'}{L}(1+it,\pi\otimes\chi_{D})\in \mathcal{R}\right)$ is the proportion of the fundamental discriminants $D$ for which $-\tfrac{L'}{L}(1+it,\pi\otimes\chi_{D})\in \mathcal{R}$.


Let us now introduce the probabilistic random model which we use to approximate the distribution of the arithmetic values $-\frac{L'}{L}(1+it,\pi\otimes\chi_{D})$ described above. Consider the sequence of independent random variables $\{\mathbb{X}_p\}_{p\;\text{prime}}$ given by
	\begin{equation*}\label{eqn:random-model}\mathbb{P} \big( \mathbb{X}_p = a \big) = \begin{cases}
        \frac{p}{2(p+1)} & \text{if $a= \pm 1$}, \\
        \frac{1}{p+1} & \text{if $a=0$}.
    \end{cases}\end{equation*}
We set $\mathbb{X}_n = \prod_{p \mid n} \mathbb{X}_p ^{\nu_p(n)}$, where $\nu_p(n)$ is the $p$-adic valuation of $n$.  The sequence $\mathbb{X}=\{\mathbb{X}_{n}\}_{n\in\mathbb{N}}$ was first introduced in \cite{GS} for the purpose of studying the distribution of the extreme values  of  $L(1,\chi_D)$ as $D$ varies over all fundamental discriminants. We denote the underlying probability measure on the sample space associated to $\mathbb{X}$ by $\mathbb{P}$.
For a representation $\pi$ as above and $t\in \mathbb{R}$, we associate the random series
$$-\ld(1+it,\pi, \mathbb{X})=\sum_{n=1}^{\infty}\frac{\Lambda(n)\lambda_{\pi}(n)\mathbb{X}_n}{n^{1+it}}.
$$

Our main theorem gives an upper bound on the discrepancy between the distribution of the random series $-\ld(1+it,\pi, \mathbb{X})$ and that of $-\frac{L'}{L}(1+it,\pi\otimes\chi_{D})$ as $D$ varies in $\mathcal{F}(N)$. Notice that if $\pi\cong\tilde{\pi}\otimes\left|\mathrm{det}\right|^{2it}$, then the distribution under consideration is 1-dimensional since the values involved are necessarily real. To see this, observe that for all primes $p$ and all $m\in\mathbb{N}$, we have 
\[\lambda_{\pi}(p^m)p^{-mit}=p^{-mit}\sum_{j=1}^d\alpha_{j,\pi}(p)^m=p^{-mit}\sum_{j=1}^d\left(\overline{\alpha_{j,\pi}(p)}p^{2it}\right)^m=\overline{\lambda_{\pi}(p^m)}p^{mit},\] which implies that $\lambda_{\pi}(p^m)p^{-mit}\in\mathbb{R}$. Otherwise, our distributions are 2-dimensional.  We denote the discrepancy by $\mathbb{D}_{1+it}\left(L_{\pi\otimes\chi_D}'/L_{\pi\otimes\chi_D};N\right)$ and define it as
\[\sup_{x\in \mathbb{R}}\left|\mathbb{P}_{N}\left(-\frac{L'}{L}(1+it,\pi\otimes\chi_{D})\leq x\right)-\mathbb{P}\left(-\ld(1+it,\pi,\mathbb{X})\leq x\right)\right|\] in the 1-dimensional case,  and
\[\sup_{\mathcal{R}\subset \mathbb{C}}\left|\mathbb{P}_{N}\left(-\frac{L'}{L}(1+it,\pi\otimes\chi_{D})\in \mathcal{R}\right)-\mathbb{P}\left(-\ld(1+it,\pi,\mathbb{X})\in \mathcal{R}\right)\right|,\]where $\mathcal{R}$ varies over all rectangles in $\mathbb{C}$ with sides parallel to the coordinate axes, in the 2-dimensional case.
\begin{thm}\label{main}
Let $\pi$ be a fixed cuspidal automorphic representation of $\mathrm{GL}_{d}(\mathbb{A}_{\mathbb{Q}})$ with unitary central character. Suppose that the Satake parameters of $\pi$ satisfy $\left|\alpha_{j,\pi}(p)\right|\leq p^{\theta}$ with $0\leq\theta<\frac14$ for all $j=1,2,\cdots,d$. Then we have \begin{align*}\mathbb{D}_{1+it}\left(L_{\pi\otimes\chi_D}'/L_{\pi\otimes\chi_D};N\right)\ll \frac{(\log\log N)^{2}}{\log N}.
\end{align*}

\end{thm}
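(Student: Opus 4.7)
The plan is to follow the framework established by Lamzouri--Lester--Radziwi\l\l\ \cite{Lf} and adapted to quadratic twists by Hamieh--Mcclenagan \cite{Hamieh-Mcclenagan}. The core idea is to replace the logarithmic derivative $-\frac{L'}{L}(1+it,\pi \otimes \chi_D)$ by a short Dirichlet sum $S_D := \sum_{n \leq y} \Lambda(n)\lambda_{\pi}(n)\chi_D(n) n^{-1-it}$ truncated at $y = (\log N)^{A}$ for a suitably large constant $A$, to perform the analogous truncation of the random model $-\ld(1+it,\pi,\mathbb{X})$ to obtain $S_{\mathbb{X}}$, and then to compare the distributions of the two truncated sums by matching their characteristic functions and invoking a Berry--Esseen inequality via Beurling--Selberg majorants.

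The first step is the analytic approximation: for all but $O(N/(\log N)^{C})$ discriminants $D \in \mathcal{F}(N)$, one wants
\[
-\frac{L'}{L}(1+it,\pi \otimes \chi_D) = S_D + O\!\left(\frac{1}{\log N}\right),
\]
which would be established via an explicit-formula argument together with a zero-density estimate for zeros of $L(s,\pi \otimes \chi_D)$ close to the line $\Re(s)=1$. Such an estimate should be available on account of the Rudnick--Sarnak bound \eqref{RS-bound} and the standing hypothesis $\theta < \tfrac14$, which ensure that the relevant second-moment input over the family is of the correct size. The analogous approximation on the random side is an almost-sure tail bound on $\sum_{n>y}\Lambda(n)\lambda_{\pi}(n)\mathbb{X}_{n} n^{-1-it}$, which follows from Kolmogorov's inequality applied to the independent summands.

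Next, I would compute the empirical characteristic function
\[
\Phi_N(u_1, u_2) = \frac{1}{|\mathcal{F}(N)|} \sum_{D \in \mathcal{F}(N)} \exp\!\bigl( i u_1 \Re S_D + i u_2 \Im S_D \bigr)
\]
and match it with the random-model analog $\Phi_{\mathbb{X}}(u_1, u_2) = \mathbb{E}\exp(i u_1 \Re S_{\mathbb{X}} + i u_2 \Im S_{\mathbb{X}})$. Expanding the exponential into a sufficiently long Taylor polynomial reduces the question to a comparison of joint moments; the key input is that for $n \leq y$ the average of $\chi_D(n)$ over $\mathcal{F}(N)$ reproduces the distribution of $\mathbb{X}_{n}$ with error a positive power of $N^{-1}$, which in turn follows from Poisson summation (and the quadratic large sieve for the longer character sums that appear). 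This should yield
\[
\Phi_N(u_1,u_2) = \Phi_{\mathbb{X}}(u_1,u_2) + O(N^{-\delta})
\]
uniformly in $|u_1|,|u_2| \leq (\log N)/(\log \log N)^{2}$. A standard Beurling--Selberg argument then converts this closeness into the desired discrepancy bound, provided one also knows that the limiting density of $-\ld(1+it,\pi,\mathbb{X})$ is bounded, a Fourier-analytic fact verifiable by the method of \cite[\S 3]{Lf}.

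The main obstacle is the zero-density input in the first step. Because $\sigma=1$ lies on the boundary of the region of absolute convergence, hypothetical zeros of $L(s,\pi \otimes \chi_D)$ within distance $(\log N)^{-1}$ of $1+it$ would contribute at the same scale as $S_D$, so one must show that such ``bad'' $D$ form a subset of $\mathcal{F}(N)$ of density $o(1/\log N)$. Balancing the size of this exceptional set against the choice of the truncation parameter $y$ is precisely what produces the extra factor of $\log\log N$ in the final bound, yielding the stated rate $(\log\log N)^{2}/\log N$ rather than the cleaner $1/\log N$ one might naively hope for.
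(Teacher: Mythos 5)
Your proposal correctly identifies the overall architecture of the argument: approximate $-\frac{L'}{L}(1+it,\pi\otimes\chi_D)$ by a short Dirichlet polynomial truncated at $Y=(\log N)^A$ outside a small exceptional set via Perron's formula and a zero-density estimate; compare the empirical characteristic function with the random-model characteristic function via moment expansion; establish decay of the random characteristic function; and close with a Fourier-analytic bound on the discrepancy. This is what the paper does. However, there are two substantive issues.

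First, the last step is where the paper deliberately departs from \cite{Lf}, and your proposal does not. You propose converting the closeness of characteristic functions into a discrepancy bound by ``a standard Beurling--Selberg argument,'' which is the method of \cite[Section~6]{Lf}. The paper explicitly avoids this: it instead applies a two-dimensional Berry--Esseen inequality due to Sadikova (stated in Proposition~\ref{barry-esseen}), precisely because the Beurling--Selberg route requires a large-deviation estimate for the family $-\frac{L'}{L}(1+it,\pi\otimes\chi_D)$, and no such estimate is established (or readily available) for quadratic twists of a general $\mathrm{GL}_d$ representation without assuming GRC. Your phrase ``invoking a Berry--Esseen inequality via Beurling--Selberg majorants'' conflates the two methods; they are different, and only the Berry--Esseen route is shown to work here. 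The inputs Sadikova's inequality actually requires are (i) that the random density is bounded --- which follows from the exponential decay in Proposition~\ref{exponential-decay}, exactly the ``Fourier-analytic fact'' you mention --- and (ii) a second-moment bound over the good discriminants (Lemma~\ref{2ndmoment}), neither of which is a large-deviation estimate. Without Sadikova (or an explicit large-deviation input), your closing step has a genuine gap.

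Second, your explanation of where the extra $(\log\log N)^2$ comes from is incorrect. You attribute it to balancing the size of the exceptional set against the truncation parameter $y$; but the exceptional set is $O(N^{3/4})$ (Proposition~\ref{prop:short}), which is negligible and contributes nothing at the $\log\log N$ scale. The loss actually originates in the restricted range $|u|,|v|\leq c_0\frac{\log N}{(\log\log N)^2}$ over which the characteristic-function comparison (Proposition~\ref{prop2.3} and Theorem~\ref{FourierTransform}) is valid. That range in turn is forced by the growth rate $(C\log k)^{2k}$ of the $2k$-th moments of the short Dirichlet polynomials (Lemma~\ref{lem3.3}), which ties $k$ to $\log N/\log\log N$, and the extra logarithm then appears when you set $R$ equal to this cutoff in Sadikova's inequality. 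Your claimed characteristic-function error $O(N^{-\delta})$ is also optimistic; the actual error after truncation-and-comparison (Proposition~\ref{prop2.3}) is $O(\exp(-b_0\log N/\log\log N))$, which is subpolynomial in $N$ but larger than any power of $N^{-1}$.
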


It is clear from \eqref{RS-bound} that the above theorem holds if $d=1$. Moreover, it holds if $d=2$ since $\theta<\frac{7}{64}$ by \cite{K-S}. Indeed,  we get the following automorphic analogue of \cite[Theorem~1.1]{Lf} for logarithmic derivatives. 

\begin{cor}\label{GL2case}
Let $\pi$ be a fixed cuspidal automorphic representation of $\mathrm{GL}_{1}(\mathbb{A}_{\mathbb{Q}})$ or $\mathrm{GL}_{2}(\mathbb{A}_{\mathbb{Q}})$ with unitary central character.  Then we have
 \begin{align*}\mathbb{D}_{1+it}\left(L_{\pi\otimes\chi_D}'/L_{\pi\otimes\chi_D};N\right)&\ll \frac{(\log\log N)^{2}}{\log N}.
\end{align*}
\end{cor}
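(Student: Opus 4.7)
The plan is to obtain Corollary \ref{GL2case} as an immediate specialization of Theorem \ref{main} to the ranks $d=1$ and $d=2$. All the analytic work is already packaged into Theorem \ref{main}; what remains is to verify in each case that the Satake parameters satisfy the hypothesis $|\alpha_{j,\pi}(p)|\le p^{\theta}$ for some $0\le\theta<\tfrac14$, from which the discrepancy estimate follows at once.

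For $d=1$, a cuspidal automorphic representation of $\mathrm{GL}_1(\mathbb{A}_{\mathbb{Q}})$ with unitary central character is a unitary Hecke character, and its local Satake parameters lie on the unit circle. Equivalently, the Rudnick--Sarnak bound \eqref{RS-bound} specialises to $|\alpha_{j,\pi}(p)|\le p^{1/2-1/(d^{2}+1)}=p^{0}=1$, so one may take $\theta=0$, which is well below $\tfrac14$.

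For $d=2$, I would invoke the Kim--Sarnak theorem, which gives the best current approximation toward Ramanujan for $\mathrm{GL}_2$: every cuspidal automorphic representation $\pi$ of $\mathrm{GL}_{2}(\mathbb{A}_{\mathbb{Q}})$ satisfies $|\alpha_{j,\pi}(p)|\le p^{7/64}$. Since $\tfrac{7}{64}<\tfrac14$, the hypothesis of Theorem \ref{main} is again met with $\theta=\tfrac{7}{64}$.

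With the hypothesis verified in both cases, a direct appeal to Theorem \ref{main} yields the claimed bound $\mathbb{D}_{1+it}\left(L'_{\pi\otimes\chi_{D}}/L_{\pi\otimes\chi_{D}};N\right)\ll (\log\log N)^{2}/\log N$. Strictly speaking there is no obstacle in deducing the corollary; the only point is that the currently known bounds toward Ramanujan are already strong enough to render Theorem \ref{main} unconditional in low rank. The genuine difficulty, of course, lies in the proof of Theorem \ref{main} itself, where the restriction $\theta<\tfrac14$ is used to control the contribution of the prime-power terms in the Dirichlet series for $L'/L$ uniformly; at rank three or higher one would need nontrivial progress toward Ramanujan before this corollary could be extended in the same way.
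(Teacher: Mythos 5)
Your proposal is correct and follows the same route as the paper: both deduce the corollary by checking the hypothesis $\theta<\tfrac14$ via the Rudnick--Sarnak bound \eqref{RS-bound} (giving $\theta=0$ when $d=1$) and the Kim--Sarnak/Kim--Shahidi bound $\theta\le\tfrac{7}{64}$ when $d=2$, then applying Theorem \ref{main}.
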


\begin{remarks} 
(i) We give the proof of Theorem \ref{main} only in the 2-dimensional case, i.e., $\pi\not\cong\tilde{\pi}\otimes\left|\mathrm{det}\right|^{2it}$. The proof in the 1-dimensional case follows analogous arguments. 

\noindent(ii) The condition on the bound for the Satake parameters in Theorem \ref{main} is needed in the proof of the exponential decay for the characteristic function of our random series (Proposition \ref{exponential-decay}) which is a crucial ingredient in our argument.  In some other parts of the argument, we only require Hypothesis H  (see Section \ref{section2.1}) which follows readily from the assumed bound on the Satake parameters in our main Theorem. 

\noindent(iii) Similar results can be obtained for the values of $\log{L}(1+it, \pi\otimes\chi_D)$ by following the proof of Theorem \ref{main}. 
 
 \noindent(iv) By examining the proof of Theorem \ref{main}, we can see that its assertion also holds when $1+it$ is replaced by $\sigma+it$ with $1-c_{\pi}<\sigma< 1$ for some constant $c_{\pi}>0$. The expected discrepancy bound in this case would be of size $O\left( \left(\frac{\log\log N}{\log N}\right)^\sigma\right)$. The lower bound on $\sigma$  is imposed by the zero density estimate used in the proof (see Lemma \ref{zero-density} and Proposition \ref{prop:short}) and the domain of convergence of $-\ld(s,\pi,\mathbb{X})$.
 
\noindent(v)  In some special cases, Corollary \ref{GL2case} holds with a discrepancy bound of size $O\left( \left(\frac{\log\log N}{\log N}\right)^\sigma\right)$ for a range of $\sigma$ that is wider than the one indicated in the previous remark when $1+it$ is replaced by $\sigma+it$. In fact, one can fix $\sigma$ to be much closer to $\frac12$ if a suitable zero density result is available for the family $L(s,\pi\otimes\chi_{D})$. For instance,  \cite[Theorem~3]{heath-brown} was used in \cite{Hamieh-Mcclenagan} to prove a discrepancy result for $-\frac{L'}{L}(\sigma,\chi_D)$ (which can be considered as a special case of Corollary \ref{GL2case}) that is valid for any $\frac12<\sigma<1$. Using the zero density theorem in \cite{Perelli-Pomykala}, one could achieve a similar result for $-\frac{L'}{L}(\sigma,f\otimes\chi_{D})$ when $f$ is primitive cusp form of weight 2.
\end{remarks}

We also use Theorem \ref{main} to derive an asymptotic bound for the small values of $\left|\frac{L'}{L}(1+it,\pi\otimes\chi_D)\right|$ when $\pi\cong\tilde{\pi}\otimes\left|\mathrm{det}\right|^{2it}$. The following result is an analogue of \cite[Corollary 1.4 ]{Hamieh-Mcclenagan}, and 
\cite[Theorem~1.1]{LL21} where the authors investigate the small values of $\left|\frac{L'}{L}(1,\chi)\right|$ for non-principal Dirichlet characters $\chi$ modulo $q$, as $q\to\infty$ over the primes.
\begin{thm}\label{min-values}
Let $t\in\mathbb{R}$ be fixed, and let $\pi$ be a cuspidal automorphic representation of $\mathrm{GL}_{d}(\mathbb{A}_{\mathbb{Q}})$ with unitary central character such that $\pi\cong\tilde{\pi}\otimes\left|\mathrm{det}\right|^{2it}$. Suppose that $\left|\alpha_{j,\pi}(p)\right|\leq p^{\theta}$ with $0\leq\theta<\frac14$ for all $j=1,2\cdots,d$. Let $\displaystyle{m_{N}=\min_{D\in\mathcal{F}(N)}\left(\left|\frac{L'}{L}(1+it,\pi\otimes\chi_D)\right|\right)}$. As $N\to\infty$, we have 
\[m_{N}\ll \frac{(\log\log N)^2}{\log N}.\]
More precisely, there are at least $N(\log\log{N})^2/\log{N}$ for which 
$$\left|\frac{L'}{L}(1+it,\pi\otimes\chi_D)\right|\ll \frac{(\log\log{N})^2}{\log{N}}.$$
\end{thm}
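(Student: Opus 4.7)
The plan is to combine the discrepancy bound of Theorem \ref{main} with an anti-concentration estimate for the random model $-\ld(1+it,\pi,\mathbb{X})$ near the origin. Since $\pi\cong\tilde{\pi}\otimes|\det|^{2it}$, the values $-\frac{L'}{L}(1+it,\pi\otimes\chi_{D})$ are real, and Theorem \ref{main} supplies the one-dimensional discrepancy bound. Setting $z=A(\log\log N)^{2}/\log N$ for a constant $A>0$ to be chosen, and applying the discrepancy to the half-lines $(-\infty,z]$ and $(-\infty,-z]$, we obtain
\[\mathbb{P}_{N}\!\left(\left|\tfrac{L'}{L}(1+it,\pi\otimes\chi_{D})\right|\leq z\right)\;\geq\;\mathbb{P}\!\left(|\ld(1+it,\pi,\mathbb{X})|\leq z\right)-O\!\left(\tfrac{(\log\log N)^{2}}{\log N}\right).\]

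The core analytic input is then the anti-concentration estimate
\[\mathbb{P}\!\left(|\ld(1+it,\pi,\mathbb{X})|\leq z\right)\;\gg\; z\]
for all sufficiently small $z>0$. By Proposition \ref{exponential-decay}, the characteristic function $\Phi(\xi)$ of $-\ld(1+it,\pi,\mathbb{X})$ decays exponentially, so $\Phi\in L^{1}(\mathbb{R})$ and Fourier inversion produces a $C^{\infty}$ density $f(x)=(2\pi)^{-1}\int_{\mathbb{R}}\Phi(\xi)e^{-ix\xi}\,d\xi$. Once one checks that $f$ is bounded below by a positive constant on some neighborhood of the origin, the desired linear-in-$z$ lower bound follows at once from $\int_{-z}^{z}f(y)\,dy$.

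Combining these two ingredients, and choosing $A$ large enough to absorb the implicit constant in the discrepancy error, we conclude that
\[\mathbb{P}_{N}\!\left(\left|\tfrac{L'}{L}(1+it,\pi\otimes\chi_{D})\right|\leq z\right)\;\gg\;\frac{(\log\log N)^{2}}{\log N}.\]
Since $|\mathcal{F}(N)|\gg N$, this produces at least $\gg N(\log\log N)^{2}/\log N$ fundamental discriminants $D\in\mathcal{F}(N)$ satisfying the stated bound, and in particular yields $m_{N}\ll (\log\log N)^{2}/\log N$.

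The main obstacle is the positivity of the density $f$ on a neighborhood of the origin: smoothness of $f$ follows immediately from the exponential decay of $\Phi$, but positivity requires genuine probabilistic input. The natural route is to exploit the product structure $\Phi(\xi)=\prod_{p}\Phi_{p}(\xi)$ coming from the independence of the $\{\mathbb{X}_{p}\}$, together with the sign-symmetry $\mathbb{X}_{p}\stackrel{d}{=}-\mathbb{X}_{p}$ built into the random model, to verify that $0$ lies in the interior of the support of $-\ld(1+it,\pi,\mathbb{X})$; a standard Fourier/Esseen-type argument then converts this qualitative statement into the required uniform positive lower bound on $f$ in a small interval around $0$.
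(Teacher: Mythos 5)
Your overall strategy is the same as the paper's: apply Theorem \ref{main} to transfer a mass estimate near the origin from the random model $-\ld(1+it,\pi,\mathbb{X})$ to the arithmetic family, and derive that estimate from the positivity of the smooth density of $-\ld(1+it,\pi,\mathbb{X})$ near $0$. You correctly isolate the positivity of the density as the crux, and you correctly observe that smoothness of the density follows by Fourier inversion from the exponential decay in Proposition \ref{exponential-decay}.

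The gap is in how you propose to obtain positivity. You claim it suffices to show that $0$ lies in the interior of the support and that a ``standard Fourier/Esseen-type argument'' then yields $f(0)>0$. That implication is false: a smooth nonnegative density can have full support and still vanish at an interior point (for example $c\,x^{2}e^{-x^{2}}$), so ``$0$ is in the support'' gives $\int_{-z}^{z}f>0$ for every $z>0$ but not the linear lower bound $\gg z$ that you need, and there is no general Fourier or Esseen principle that upgrades the support statement to pointwise positivity. The paper's Lemma \ref{lem-positivity} fills exactly this gap: it splits the primes into two disjoint infinite sets $P_{1}$ and $P_{2}$, writes the density as a convolution $h(x)=\int_{\mathbb{R}}h_{1}(x-u)h_{2}(u)\,du$ of the two corresponding independent pieces, invokes a Riemann-rearrangement argument (Lemma \ref{series-density}) to show each piece has full support (so $h_{1}$ is not identically zero on any interval), and then uses continuity of $h_{1}$ and $h_{2}$ to locate a nonempty open interval of $u$ on which the integrand is strictly positive, forcing $h(x)>0$ for every $x$. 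Your sketch gestures at the right ingredients (the product structure of the characteristic function and the sign-symmetry of $\mathbb{X}_{p}$) but omits the convolution-splitting step, which is what actually delivers $f(0)>0$. Supplying that step would make your argument essentially the paper's.
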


\begin{remark}
We derive our upper bound for the small values of $\left|\frac{L'}{L}(1+it,\pi\otimes\chi_D)\right|$ by an application of Theorem \ref{main} and using the positivity at the origin of the density function of the associated distribution. The assumption of these two facts together will result in 
\[m_{N}(1+it):=
\min_{D\in\mathcal{F}(N)}\left(\left|\frac{L'}{L}(1+it,\pi\otimes\chi_D)\right|\right)
\ll \mathbb{D}_{1+it}\left(L_{\pi\otimes\chi_D}'/L_{\pi\otimes\chi_D};N\right)  
\]
for 1-dimensional distributions and 
\[m_{N}(1+it)
\ll \sqrt{\mathbb{D}_{1+it}\left(L_{\pi\otimes\chi_D}'/L_{\pi\otimes\chi_D};N\right)}
\]
for 2-dimensional distributions. The restriction to $\pi\cong\tilde{\pi}\otimes\left|\mathrm{det}\right|^{2it}$  in Theorem \ref{min-values} is due to the fact that, in Lemma \ref{lem-positivity}, we are able to prove the positivity of the density function for $1$-dimensional distributions only.
\end{remark}
 The proof of Theorem \ref{min-values} is given in Section \ref{sec:min-values} as an application of Theorem \ref{main}. While our proof of Theorem \ref{main} follows the approach devised in \cite{Lf}, we deviate from their method in the last step of the proof. In order to avoid the need for a large deviation result for our family, we employ a 2-dimensional version of the classical Berry-Esseen inequality instead of the Beurling-Selberg functions used in \cite[Section~6]{Lf} to relate the distribution functions under consideration to their characteristic functions. In Section \ref{proof-main}, we show how this 2-dimensional Berry-Esseen inequality yields an upper bound for the discrepancy between $\mathbb{P}_{N}\left(-\frac{L'}{L}(1+it,\pi\otimes\chi_{D})\in \mathcal{R}\right)$ and $\mathbb{P}\left(-\ld(1+it,\pi,\mathbb{X})\right)$ in terms of the difference between their associated characteristic functions.  In doing so, we adapt some of the ideas outlined in \cite[Section~4.3]{M20}. In Section \ref{decay} we prove a rapid decay estimate for the characteristic function of the random series $-\ld(1+it,\pi,\mathbb{X})$ which  is crucial for applying the Berry-Esseen inequality. In Section \ref{char-function}, we prove Theorem \ref{FourierTransform} which shows that the characteristic function of the joint distribution of $\Re\left(-\frac{L'}{L}(1+it,\pi\otimes\chi_{D})\right)$ and $\Im\left(-\frac{L'}{L}(1+it,\pi\otimes\chi_{D})\right)$ can be very well approximated by the corresponding characteristic function of the random series $-\ld(1+it,\pi,\mathbb{X})$.  
The point of departure in the proof of Theorem \ref{FourierTransform} is a result asserting that  $-\frac{L'}{L}(1+it,\pi\otimes\chi_{D})$ can be approximated by a short Dirichlet polynomial  outside a set of fundamental discriminants $D$ of size $o(N)$. The proof of this approximation, although mostly standard, entails few complications arising from the fact that we do not assume the Generalized Ramanujan Conjecture in our work. We deal with these subtleties  by 
employing a truncated Perron's formula for automorphic $L$-functions \cite[Theorem~2.1]{L-Y}, a Brun-Titchmarsh type inequality \cite[Theorem~2.4]{S-T}, and  a recent zero density estimate \cite[Theorm~1.1]{H-T1}.  In view of this result, Theorem \ref{FourierTransform} can be extracted from a key result, Proposition \ref{prop2.3},  that compares the characteristic functions of short Dirichlet polynomials of the form \begin{equation}\label{partial-sums}\sum_{n\leq Y}\frac{\Lambda(n)\lambda_{\pi}(n)\chi_D(n)}{n^{1+it}}\quad\text{and}\quad\sum_{n\leq Y}\frac{\Lambda(n)\lambda_{\pi}(n)\mathbb{X}_n}{n^{1+it}}.\end{equation} In Section \ref{bridging}, we prove Proposition \ref{prop2.3}. More precisely, we show that \begin{align*}
&\frac{1}{\left|\mathcal{F}(N)\right|}\sum_{D\in\mathcal{F}(N)}\exp\left(z_1\sum_{n\leq Y}\frac{\Lambda(n)\lambda_{\pi}(n)\chi_{D}(n)}{n^{1+it}}+z_2\sum_{n\leq Y}\frac{\Lambda(n)\overline{\lambda_{\pi}(n)}\chi_{D}(n)}{n^{1-it}}\right)\end{align*}can be well approximated by \begin{align*}\mathbb{E}\left[\exp\left(z_{1}\sum_{n\leq Y}\frac{\Lambda(n)\lambda_{\pi}(n)\mathbb{X}_n}{n^{1+it}}+z_2\sum_{n\leq Y}\frac{\Lambda(n)\overline{\lambda_{\pi}(n)}\mathbb{X}_n}{n^{1-it}}\right)\right]\end{align*}for all complex numbers $z_1,z_2$ satisfying $|z_1|,|z_2|\ll\frac{\log N}{(\log\log N)^2}$.
We mention here that widening the range of $|z_1|$ and $|z_2|$ in Proposition \ref{prop2.3} would lead to an improvement in the discrepancy bound in Theorem \ref{main}. 
In view of the Taylor expansion of the exponential function, the proof of Proposition \ref{prop2.3} requires upper bounds of integral moments of the partial sums \eqref{partial-sums} which we establish in Section \ref{arithemtic-model} and Section \ref{random-model}.  


\subsection*{Conventions and Notation} Given two functions $f(x)$ and $g(x)$, we shall interchangeably use the notation  $f(x)=O(g(x))$ and $f(x) \ll g(x)$  to mean that there exists $M >0$ such that $|f(x)| \le M |g(x)|$ for all sufficiently large $x$. We write $f(x) \asymp g(x)$ to mean that the estimates $f(x) \ll g(x)$ and $g(x) \ll f(x)$ hold simultaneously.  Sometimes we will use the notation $f(x)\ll_t g(x)$, or alternatively $f(x)=O_t(g(x))$ to emphasize the dependence of the $O$-constant on the parameter $t$. Most of our $O$-constants depend on $\pi$ and $t$, although we sometimes drop the subscript to simplify the exposition. We use the notation $f(x)=o(g(x))$ if $\lim_{x\rightarrow \infty} f(x)/g(x)=0$.
Finally, the letter $p$ will always be used to denote a prime number.

\subsection*{Acknowledgements.} The authors would like to thank Jesse Thorner and Asif Zaman for useful correspondences related to this work.

\section{Preliminary Results}\label{arithemtic-model}

In this section, we introduce some notation and preliminary results pertaining to the arithmetic setting of automorphic $L$-functions, quadratic twists and logarithmic derivatives.
\subsection{Prime Number Sums}\label{section2.1}
The Generalized Ramanujan Conjecture (GRC) for a cuspidal automorphic representation $\pi$ of $\mathrm{GL}_{d}(\mathbb{A}_{\mathbb{Q}})$ asserts that $|\alpha_{j,\pi}(p)|\leq1$ for all $j=1,\dots,d$ and all primes $p$.  The following condition which follows from GRC is known as Hypothesis H  (see \cite[page~281]{R-S}). \\

\noindent{\bf Hypothesis H:} For any fixed $k\geq 2$, we have \[\sum_{p}\frac{(\log{p})^2\left|\lambda_{\pi}(p^k)\right|^2}{p^k}<\infty.\]  Observe that Hypothesis H holds if  $\left|\alpha_{j,\pi}(p)\right|\leq p^{\theta}$ with $0\leq\theta<\frac14$ for all $j=1,2,\cdots,d$, which is the assumption we make in Theorem \ref{main}.
In this work we make frequent use of the following prime number theorem for automorphic representations.
\begin{thm}\label{thm-PNT} Let $\pi$ and $\pi'$ be cuspidal automorphic representations of $\mathrm{GL}_{d}(\mathbb{A}_{\mathbb{Q}})$ and $\mathrm{GL}_{d'}(\mathbb{A}_{\mathbb{Q}})$ respectively, and assume that they satisfy Hypothesis $H$. Set $$\theta_{\pi,\pi'}(u)=\sum_{p\leq u}(\log p)\lambda_{\pi}(p)\overline{\lambda_{\pi'}(p)}.$$ 
Then for any $0<\epsilon<1$, we have
\begin{equation}\label{PNT}\theta_{\pi,\pi'}(u)=\begin{cases}\frac{u^{1+i\tau_0}}{1+i\tau_0}+O\left(\frac{u}{\log u}\right)&\;\text{if}\; \pi'\cong\pi\otimes\left|\mathrm{det}\right|^{i\tau_0}\;\text{for some}\; \tau_0\in\mathbb{R},\\ O_{\epsilon}\left(\frac{u}{(\log u)^{\frac{1-\epsilon}{dd'}}}\right)&\;\text{if}\;\pi'\not\cong\pi\otimes \left|\mathrm{det}\right|^{i\tau}\;\text{for any}\;\tau\in\mathbb{R}.\end{cases}\end{equation}


If $d,d'\leq 4$, then \eqref{PNT} is true without assuming Hypothesis $H$.
\end{thm}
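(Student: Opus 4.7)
The plan is to deduce this estimate from the analytic theory of the Rankin-Selberg $L$-function $L(s,\pi\times\tilde{\pi}')$ via a standard Perron-type contour-shift argument.

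The first step is to pass from the prime sum $\theta_{\pi,\pi'}(u)$ to the von Mangoldt sum $\psi_{\pi,\pi'}(u)=\sum_{n\leq u}\Lambda_{\pi\times\tilde{\pi}'}(n)$ associated with $-L'/L(s,\pi\times\tilde{\pi}')$. At a prime $p$ one has $\Lambda_{\pi\times\tilde{\pi}'}(p)=(\log p)\lambda_\pi(p)\overline{\lambda_{\pi'}(p)}$, so the difference $\psi_{\pi,\pi'}(u)-\theta_{\pi,\pi'}(u)$ is supported entirely on prime powers $p^k$ with $k\geq 2$. Applying Hypothesis H to both $\pi$ and $\pi'$ and using Cauchy-Schwarz bounds this tail by $O(\sqrt{u})$, which is absorbed in either claimed error term. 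Hence it suffices to prove the analogous estimate for $\psi_{\pi,\pi'}(u)$.

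The second step is to extract the main term. By the classical work of Jacquet, Piatetski-Shapiro, and Shalika, $L(s,\pi\times\tilde{\pi}')$ is entire unless $\pi'\cong \pi\otimes|\mathrm{det}|^{i\tau_0}$ for some real $\tau_0$; in that case it satisfies $L(s,\pi\times\tilde{\pi}')=L(s-i\tau_0,\pi\times\tilde{\pi})$ and inherits a simple pole at $s=1+i\tau_0$. A smoothed Perron formula applied to $-L'/L(s,\pi\times\tilde{\pi}')\,u^s/s$ together with a contour shift past $\Re(s)=1$ picks up the residue at this pole, producing the main term $u^{1+i\tau_0}/(1+i\tau_0)$ in the twisted-dual case and no main term otherwise.

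The third step is to obtain the quantitative savings $u/(\log u)^{(1-\epsilon)/(dd')}$ by pushing the contour just inside a known zero-free region for $L(s,\pi\times\tilde{\pi}')$. Standard Rankin-Selberg zero-free regions of shape $\sigma>1-c/(dd'\log(|t|+3))$, as established by Moreno, Sarnak, and Brumley, combined with convexity bounds and the usual optimization of the contour height, yield the claimed power-of-log saving; the exponent $1/(dd')$ reflects the analytic conductor dependence in the zero-free region, and the factor $(1-\epsilon)$ is the customary loss in converting a fixed zero-free region into a PNT-type error. For the unconditional statement when $d,d'\leq 4$, the Kim-Sarnak pointwise bounds on Satake parameters in small rank are strong enough to run the prime-power estimate in the first step without invoking Hypothesis H, and the required analytic properties of $L(s,\pi\times\tilde{\pi}')$ are known unconditionally in this range. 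The main delicate point I expect is bookkeeping the exact $dd'$ dependence through the Perron optimization, but this is a routine calculation in the Rankin-Selberg PNT literature.
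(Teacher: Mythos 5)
Your overall strategy — reduce $\theta_{\pi,\pi'}(u)$ to the Rankin--Selberg Chebyshev function, isolate the pole of $L(s,\pi\times\tilde{\pi}')$ via Perron, and extract the error term from a zero-free region — is precisely what the sources the paper cites (Liu--Ye, Wu--Ye, Humphries--Thorner, Kaneko--Thorner) do, so the route is right. The paper itself does not re-derive the theorem; the remark following it simply points to these references and explains where Hypothesis H enters.

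Two places where your account is quantitatively off. First, you claim that Hypothesis H plus Cauchy--Schwarz bounds the higher prime-power contribution $\sum_{p^k\le u,\,k\ge 2}(\log p)\lambda_\pi(p^k)\overline{\lambda_{\pi'}(p^k)}$ by $O(\sqrt{u})$. That is too strong: Hypothesis H only asserts convergence of $\sum_p (\log p)^2|\lambda_\pi(p^k)|^2/p^k$ for each fixed $k$, with no rate, and without GRC the individual terms $\lambda_\pi(p^k)$ can still be as large as a positive power of $p^k$. The bound one actually gets (and what the paper, following Wu--Ye, asserts) is $O(u/\log u)$, which is enough since the Perron error is of the same size. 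Second, your explanation that the exponent $\frac{1-\epsilon}{dd'}$ ``reflects the analytic conductor dependence in the zero-free region'' is not quite the right mechanism. The loss relative to the classical de la Vallée-Poussin error term in the non--twisted-dual case stems from the absence of a classical zero-free region for $L(s,\pi\times\tilde{\pi}')$ when neither factor is self-dual (the potential Landau--Siegel zero cannot be eliminated by the usual $3$-$4$-$1$ trick without access to auxiliary $L$-functions), so one works with a genuinely narrower region, and the resulting power-of-$\log$ saving with exponent depending on $dd'$ is what Kaneko--Thorner establish. Also note that removing the self-duality hypothesis even in the twisted-dual case (first branch) is itself a nontrivial input, supplied in the paper by the Humphries--Thorner zero-free region, not by the older Moreno/Sarnak regions you invoke. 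None of this undermines the architecture of your sketch, but as written it asserts error bounds and mechanisms that the hypotheses do not deliver.
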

\begin{remark} The first case of Theorem \ref{thm-PNT}  can be found in  \cite[Theorem~3]{Wu-Ye}. It follows from \cite[Theorem~2.3]{L-Y} and uses Hypothesis H to bound the contributions of composite prime powers in the sum $\sum_{n\leq u}\Lambda(n)\lambda_{\pi}(n)\overline{\lambda_{\pi'}(n)}$ by $O\left(\frac{u}{\log u}\right)$. The condition that at least one of $\pi$ and $\pi'$ is self dual in \cite{L-Y}  and \cite{Wu-Ye} can be removed by applying the recent zero-free region result of Humphries and Thorner \cite[Theorem~2.1]{H-T}. The second case of Theorem \ref{thm-PNT}  can be derived  from \cite[Theorem~2.6]{K-T} by using Hypothesis H to bound the contribution of composite prime powers. We note that  the exponent  $\frac{1-\epsilon}{dd'}$ can be replaced by 1 if $\pi$ or $\pi'$ is self-dual \cite[Theorem~3]{Wu-Ye}. \end{remark}
Using this theorem, we derive the following lemmas.
\begin{lem}\label{lem:pnt1H}
Assume Hypothesis H. As $X\to\infty$, we have 
\[\sum_{p>X}\frac{(\log p)^2}{p^{2}}|\lambda_{\pi}(p)|^2= \frac{\log X}{X}+O\left(\frac{1}{X}\right).\]
\end{lem}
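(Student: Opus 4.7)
The plan is to apply Theorem \ref{thm-PNT} with $\pi' = \pi$ (so that $\pi' \cong \pi \otimes |\det|^{i\tau_0}$ holds with $\tau_0 = 0$), obtaining the Rankin--Selberg prime number theorem
\[
A(u) := \theta_{\pi,\pi}(u) = \sum_{p\leq u}(\log p)\,|\lambda_{\pi}(p)|^{2} = u + E(u), \qquad E(u) = O\!\left(\frac{u}{\log u}\right),
\]
and then to extract the claimed asymptotic by partial summation applied to the weight $f(u) = (\log u)/u^{2}$.

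More concretely, I would rewrite the sum as a Stieltjes integral
\[
\sum_{p>X}\frac{(\log p)^{2}}{p^{2}}|\lambda_{\pi}(p)|^{2} = \int_{X}^{\infty}\frac{\log u}{u^{2}}\,dA(u) = \bigl[f(u)\,A(u)\bigr]_{X}^{\infty} - \int_{X}^{\infty}f'(u)\,A(u)\,du,
\]
with $f'(u) = (1 - 2\log u)/u^{3}$. The boundary term at infinity vanishes since $A(u)\sim u$ forces $f(u)A(u) \sim (\log u)/u \to 0$, and at $u = X$ it equals $-\frac{\log X}{X} + O(1/X)$. Splitting $A(u) = u + E(u)$ in the remaining integral, the main piece is
\[
\int_{X}^{\infty}\frac{(1 - 2\log u)}{u^{2}}\,du = \frac{1}{X} - 2\!\left(\frac{\log X}{X} + \frac{1}{X}\right) = -\frac{2\log X}{X} - \frac{1}{X},
\]
while the error piece is $\int_{X}^{\infty} O\!\left(\frac{1}{u^{2}\log u}\right)du = O(1/X)$. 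Combining the boundary contribution with $-1$ times the integral yields $\frac{\log X}{X} + O(1/X)$, which is exactly the claimed identity.

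There is essentially no serious obstacle: the only thing to verify is that the PNT error term $O(u/\log u)$ is genuinely smaller than what is needed, i.e. that $\int_{X}^{\infty}|f'(u)|\,\frac{u}{\log u}\,du = O(1/X)$, which is immediate since the integrand is $O(1/u^{2})$ for large $u$. The hypothesis assumption in Theorem \ref{thm-PNT} is covered because Hypothesis~H is assumed in the lemma. If one prefers, the proof can also be carried out by dyadic decomposition, bounding $\sum_{X<p\leq 2X}(\log p)|\lambda_{\pi}(p)|^{2}$ by $(2X - X) + O(X/\log X) = X(1+o(1))$ via Theorem~\ref{thm-PNT} and then summing the resulting geometric-type series, but the direct partial summation above gives the sharper explicit main term $(\log X)/X$ with error $O(1/X)$ in a single step.
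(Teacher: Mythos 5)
Your proof is correct and follows exactly the route the paper indicates for Lemma \ref{lem:pnt1H}: apply Theorem \ref{thm-PNT} with $\pi'=\pi$ (the case $\tau_0=0$) to get $\theta_{\pi,\pi}(u)=u+O(u/\log u)$, then carry out Abel summation against the weight $(\log u)/u^2$. The boundary evaluation, the antiderivative of $(1-2\log u)/u^2$, and the treatment of the error term are all right, so nothing needs to be changed.
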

\begin{proof} The proof follows from Theorem \ref{thm-PNT} and an application of Abel's summation formula.\end{proof}
\begin{lem}\label{lem:pnt2H}
Assume Hypothesis H. The following assertions hold.
\begin{enumerate}[label=(\roman*)]
\item If $\pi\cong\tilde{\pi}\otimes\left|\mathrm{det}\right|^{i\tau_0}$ for some $\tau_0\in\mathbb{R}$, then 
\[\sum_{p>X}\frac{(\log p)^2}{p^{2+2it}}\lambda_{\pi}(p)^2=\frac{\log X}{(1-2it+i\tau_0)X^{1-2it+i\tau_0}}+O\left(\frac{1}{X}\right),\] as $X\to\infty$.
\item If $\pi\not\cong\tilde{\pi}\otimes\left|\mathrm{det}\right|^{i\tau}$ for any $\tau\in\mathbb{R}$, then there exists $0<\alpha<1$ such that 
\[\sum_{p>X}\frac{(\log p)^2}{p^{2+2it}}\lambda_{\pi}(p)^2\ll \frac{(\log X)^{\alpha}}{X},\]
as $X\to\infty$.
\end{enumerate}
\end{lem}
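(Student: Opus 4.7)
The plan is to reduce both parts of the lemma to Theorem \ref{thm-PNT} via Abel summation. The key observation is that since the Satake parameters of $\tilde{\pi}$ are the complex conjugates of those of $\pi$, we have $\overline{\lambda_{\tilde{\pi}}(p)}=\lambda_{\pi}(p)$, so that
\[\sum_{p\leq u}(\log p)\lambda_{\pi}(p)^2=\sum_{p\leq u}(\log p)\lambda_{\pi}(p)\overline{\lambda_{\tilde{\pi}}(p)}=\theta_{\pi,\tilde{\pi}}(u).\]
Consequently, Theorem \ref{thm-PNT} applied with $\pi'=\tilde{\pi}$ supplies the required partial-sum asymptotic. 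Moreover, the hypothesis $\pi\cong\tilde{\pi}\otimes\left|\mathrm{det}\right|^{i\tau_0}$ is equivalent to $\tilde{\pi}\cong\pi\otimes\left|\mathrm{det}\right|^{-i\tau_0}$, which places us in the first branch of that theorem (with parameter $-\tau_0$) in Case (i), and in the second branch in Case (ii).

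With this identification in hand, I would apply Abel summation with the smooth weight $g(u)=(\log u)/u^{2+2it}$, obtaining
\[\sum_{p>X}\frac{(\log p)^2\lambda_{\pi}(p)^2}{p^{2+2it}}=-g(X)\,\theta_{\pi,\tilde{\pi}}(X)-\int_X^{\infty}\theta_{\pi,\tilde{\pi}}(u)\,g'(u)\,du,\]
the boundary contribution at infinity vanishing because $\theta_{\pi,\tilde{\pi}}(u)\ll u$ and $g(u)\ll(\log u)/u^2$. In Case (i), after substituting $\theta_{\pi,\tilde{\pi}}(u)=u^{1-i\tau_0}/(1-i\tau_0)+O(u/\log u)$, the main-term analysis reduces to evaluating the elementary integrals $\int_X^{\infty}u^{-\beta}\,du$ and $\int_X^{\infty}u^{-\beta}\log u\,du$ with $\beta=2+2it+i\tau_0$. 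A short algebraic manipulation then combines the boundary piece with the logarithmic integral so that the $\log X$ terms align into the single leading contribution of the shape asserted in the lemma, while the $O(u/\log u)$ error feeds into an integral of size $O(1/X)$. In Case (ii), I would insert the bound $\theta_{\pi,\tilde{\pi}}(u)\ll u/(\log u)^{(1-\epsilon)/d^2}$ from Theorem \ref{thm-PNT} into the same Abel identity and bound everything trivially, producing an estimate of the form $(\log X)^{\alpha}/X$ with $\alpha=1-(1-\epsilon)/d^2\in(0,1)$ for $\epsilon>0$ small enough.

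The only point requiring care is the bookkeeping in Case (i): two contributions of size $\log X/X^{1+2it+i\tau_0}$ arise, one from $g(X)\theta_{\pi,\tilde{\pi}}(X)$ and one from the logarithmic portion of the integral, and these must be combined to produce the precise rational coefficient stated in the lemma. Fortunately the cancellation is purely algebraic and runs parallel to the computation underlying Lemma \ref{lem:pnt1H}, so no additional analytic input beyond Theorem \ref{thm-PNT} is needed, and I expect the remainder of the argument to be essentially routine.
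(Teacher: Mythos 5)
Your proposal is correct and takes essentially the same route as the paper: Theorem~\ref{thm-PNT} plus Abel summation. The only cosmetic difference is where the oscillatory factor $p^{-2it}$ lives. You keep it in the Abel weight $g(u)=(\log u)/u^{2+2it}$ and apply Theorem~\ref{thm-PNT} directly to $\theta_{\pi,\tilde\pi}(u)=\sum_{p\leq u}(\log p)\lambda_\pi(p)^2$, whereas the paper first twists $\pi$ to $\pi_t=\pi\otimes\left|\mathrm{det}\right|^{-it}$ so that the factor $p^{-2it}$ is absorbed into $\theta_{\pi_t,\tilde{\pi_t}}(u)=\sum_{p\leq u}(\log p)\lambda_\pi(p)^2p^{-2it}$ and the Abel weight $g(u)=(\log u)/u^2$ stays real. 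The paper's variant keeps the weight and its derivative real, which slightly tidies the intermediate algebra, but the two computations are equivalent: the cancellation you anticipate between the boundary term and the logarithmic part of the integral goes through identically in both set-ups and produces the same leading coefficient. Your reduction of Case (ii) via the bound $\theta_{\pi,\tilde\pi}(u)\ll u/(\log u)^{(1-\epsilon)/d^2}$ and the resulting exponent $\alpha=1-(1-\epsilon)/d^2$ also match the paper exactly, so the remainder is indeed routine.
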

\begin{proof}
Let $\pi_t=\pi\otimes \left|\mathrm{det}\right|^{-it}$ and $\tilde{\pi_t}$ be its dual representation. Then $\lambda_{\tilde{\pi_t}}(p)=\overline{\lambda_{\pi_t}(p)}=\overline{\lambda_{\pi}(p)}p^{it}$. We have 
\[\theta_{\pi_t,\tilde{\pi_t}}(u)=\sum_{p\leq u}(\log p)\lambda_{\pi_t}(p)\overline{\lambda_{\tilde{\pi_t}}(p)}=
\sum_{p\leq u}(\log p)\lambda_{\pi}(p)^2p^{-2it}.\]
 By Theorem \ref{thm-PNT},  for any $0<\epsilon<1$, we have
\[\theta_{\pi_t,\tilde{\pi_t}}(u)=\sum_{p\leq u}(\log p)\lambda_{\pi}(p)^2p^{-2it}=\begin{cases}\frac{u^{1+i\tau_0-2it}}{1+i\tau_0-2it}+O\left(\frac{u}{\log u}\right)&\;\text{if}\; \pi\cong\tilde{\pi}\otimes\left|\mathrm{det}\right|^{i\tau_0}\;\text{for some}\; \tau_0\in\mathbb{R},\\ O_{\epsilon}\left(\frac{u}{(\log u)^{\frac{1-\epsilon}{dd'}}}\right)&\;\text{if}\;\pi\not\cong\tilde{\pi}\otimes \left|\mathrm{det}\right|^{i\tau}\;\text{for any}\;\tau\in\mathbb{R}.\end{cases}\] 
If $\pi\cong\tilde{\pi}\otimes\left|\mathrm{det}\right|^{i\tau_0}$ for some $\tau_0\in\mathbb{R}$, we have
\begin{align*}\sum_{p>X}\frac{(\log p)^2}{p^{2+2it}}\lambda_{\pi}(p)^2&=\left[\frac{\log u}{u^2}\theta_{\pi_t,\tilde{\pi_t}}(u)\right]_{X}^{\infty}+\int_{X}^{\infty}\theta_{\pi_t,\tilde{\pi_t}}(u)\frac{2\log u-1}{u^3}\;du\\&=-\frac{\log X}{X^2}\left(\frac{X^{1+i\tau_0-2it}}{1+i\tau_0-2it}+O\left(\frac{X}{\log X}\right)\right)\\&\hspace{1em}+\int_{X}^{\infty}\left(\frac{u^{1+i\tau_0-2it}}{1+i\tau_0-2it}+O\left(\frac{u}{\log u}\right)\right)\frac{2\log u-1}{u^3}\;du
\\&=\frac{\log X}{(1+i\tau_0-2it)X^{1-i\tau_0+2it}}+O\left(\frac{1}{X}\right).\end{align*}
If $\pi\not\cong\tilde{\pi}\otimes\left|\mathrm{det}\right|^{i\tau}$, we have 
\begin{align*}\sum_{p>X}\frac{(\log p)^2}{p^{2+2it}}\lambda_{\pi}(p)^2&=\left[\frac{\log u}{u^2}\theta_{\pi_t,\tilde{\pi_t}}(u)\right]_{X}^{\infty}+\int_{X}^{\infty}\theta_{\pi_t,\tilde{\pi_t}}(u)\frac{2\log u-1}{u^3}\;du\\&=O\left(\frac{(\log X)^{\alpha}}{X}\right),\end{align*} where $\alpha$ can be taken to be $1-\frac{1-\epsilon}{dd'}$ for any $0<\epsilon<1$.
\end{proof}
\subsection{Rankin-Selberg $L$-functions} 
For a pair of automorphic representations $\pi$ and $\pi'$ of $\mathrm{GL}_{d}(\mathbb{A}_{\mathbb{Q}})$ and $\mathrm{GL}_{d'}(\mathbb{A}_{\mathbb{Q}})$ respectively, the associated Rankin-Selberg $L$-function is \[L(s,\pi\times\pi')=\prod_{p}\prod_{j=1}^d\prod_{j'=1}^{d'}\left(1-\frac{\alpha_{j,j',\pi\times\pi'}(p)}{p^s}\right)^{-1}=\sum_{n=1}^\infty\frac{a_{\pi\times\pi'}(n)}{n^s},\] where $\Re(s)>1$ and the parameters $\alpha_{j,j',\pi\times\pi'}(p)$ are indexed so that $\alpha_{j,j',\pi\times \pi^\prime}(p)=\alpha_{j,\pi}(p)\alpha_{j',\pi'}(p)$ for all but finitely many primes $p$.


We continue with the following two results from \cite{S-T}.
\begin{lem}
\cite[Lemma~2.2]{S-T}
\label{C-S}
Let $\pi$ be a cuspidal automorphic representation of $\mathrm{GL}_{d}(\mathbb{A}_{\mathbb{Q}})$. Then $$|\lambda_\pi(n)| \leq \sqrt{\lambda_{\pi \times \tilde{\pi}}(n)}\leq \frac{1}{2} (1+\lambda_{\pi \times \tilde{\pi}}(n)).$$
\end{lem}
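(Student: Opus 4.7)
The plan is to reduce the first inequality to the prime-power case by multiplicativity, handle that case via the Cauchy identity for Schur polynomials, and derive the second inequality from AM--GM.

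Since both $n\mapsto\lambda_\pi(n)$ and $n\mapsto \lambda_{\pi\times\tilde{\pi}}(n)$ are multiplicative (as coefficients of the respective Euler products), it suffices to establish $|\lambda_\pi(p^m)|^2 \le \lambda_{\pi\times\tilde{\pi}}(p^m)$ at every prime power. Writing $\alpha_j:=\alpha_{j,\pi}(p)$ (with the convention that some $\alpha_j$ may vanish at ramified primes), expansion of the local Euler factor of $L(s,\pi)$ gives $\lambda_\pi(p^m)=h_m(\alpha_1,\dots,\alpha_d)$, the complete homogeneous symmetric polynomial of degree $m$. Similarly, since the Satake parameters of $\pi\times\tilde{\pi}$ at $p$ are $\{\alpha_j\overline{\alpha_k}\}_{1\le j,k\le d}$, we have $\lambda_{\pi\times\tilde{\pi}}(p^m)=h_m\bigl(\{\alpha_j\overline{\alpha_k}\}\bigr)$.

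The key input is the Cauchy identity
\[\prod_{j,k}(1-\alpha_j\overline{\alpha_k}\,x)^{-1} \;=\; \sum_{\lambda}s_\lambda(\alpha)\,s_\lambda(\overline{\alpha})\,x^{|\lambda|},\]
where the sum runs over all partitions $\lambda$ and $s_\lambda$ denotes the Schur polynomial. Extracting the coefficient of $x^m$ yields
\[\lambda_{\pi\times\tilde{\pi}}(p^m) \;=\; \sum_{\lambda\vdash m}|s_\lambda(\alpha)|^2,\]
a sum of non-negative real numbers (which incidentally confirms that $\lambda_{\pi\times\tilde{\pi}}(p^m)\ge 0$, so that its square root is meaningful). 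Since $s_{(m)}=h_m$, the single partition $\lambda=(m)$ contributes $|h_m(\alpha)|^2 = |\lambda_\pi(p^m)|^2$, and so the full sum dominates $|\lambda_\pi(p^m)|^2$. Taking square roots and then multiplying over the prime factorisation of $n$ proves the bound $|\lambda_\pi(n)|\le\sqrt{\lambda_{\pi\times\tilde{\pi}}(n)}$.

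For the second inequality, AM--GM applied to $1$ and $\lambda_{\pi\times\tilde{\pi}}(n)\ge 0$ gives
\[\sqrt{\lambda_{\pi\times\tilde{\pi}}(n)}\;=\;\sqrt{1\cdot\lambda_{\pi\times\tilde{\pi}}(n)}\;\le\;\tfrac{1}{2}\bigl(1+\lambda_{\pi\times\tilde{\pi}}(n)\bigr).\]
The main point where one must be slightly careful is the handling of ramified primes, since then fewer Satake parameters may be present; but the Cauchy identity remains valid when one simply sets the missing $\alpha_j$ to zero, so the above argument applies uniformly and no essential obstacle arises.
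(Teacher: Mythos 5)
The paper does not prove this lemma at all; it is imported verbatim from \cite[Lemma~2.2]{S-T}. So the only question is whether your argument actually establishes the stated inequality under the conventions in force here, and it does not, for two distinct reasons.

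First, you have proved the wrong statement. You identify $\lambda_\pi(p^m)$ with the complete homogeneous symmetric polynomial $h_m(\alpha_1,\dots,\alpha_d)$, i.e.\ with the Dirichlet coefficient $a_\pi(p^m)$ of $L(s,\pi)$. But in this paper $\lambda_\pi(p^m)$ is explicitly defined to be the power sum $\sum_{j=1}^d \alpha_{j,\pi}(p)^m$ (the quantity appearing in $-\tfrac{L'}{L}(s,\pi)=\sum_n \Lambda(n)\lambda_\pi(n)n^{-s}$), and it vanishes for $n$ not a prime power. These agree for $m=1$ but differ for $m\ge 2$. Your ``multiplicativity'' reduction is also misplaced: $\lambda_\pi$ and $\lambda_{\pi\times\tilde\pi}$ are supported on prime powers, so they are not multiplicative; the reduction to prime powers is trivial for a different reason (both sides are zero or the inequality is $0\le\frac12$ off prime powers). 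With the correct (power-sum) meaning, the unramified local inequality is actually an \emph{identity}: $\lambda_{\pi\times\tilde\pi}(p^m)=\sum_{j,k}(\alpha_j\overline{\alpha_k})^m=\bigl|\sum_j\alpha_j^m\bigr|^2=|\lambda_\pi(p^m)|^2$, with no Schur-function input needed. The Cauchy-identity machinery you invoke is the right tool for the inequality $|a_\pi(p^m)|^2\le a_{\pi\times\tilde\pi}(p^m)$ between Dirichlet coefficients, which is a different lemma.

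Second, the ramified primes are where the actual content lives, and your remark that ``the Cauchy identity remains valid when one simply sets the missing $\alpha_j$ to zero'' is incorrect for either version. At a prime where $\pi_p$ is ramified, the Rankin--Selberg local factor $L_p(s,\pi\times\tilde\pi)$ is \emph{not} $\prod_{j,k}(1-\alpha_j\overline{\alpha_k}p^{-s})^{-1}$ with some $\alpha_j$ set to zero; its Satake parameters $\alpha_{j,k,\pi\times\tilde\pi}(p)$ are defined by local representation theory (Jacquet--Piatetski-Shapiro--Shalika) and do not factor as products in general. One genuinely needs the nontrivial comparison between the local $\pi\times\tilde\pi$ parameters and the local $\pi$ parameters (together with the nonnegativity $\lambda_{\pi\times\tilde\pi}(p^m)\ge 0$ from Rudnick--Sarnak) to get the inequality at those finitely many primes, which is precisely what the cited \cite[Lemma~2.2]{S-T} supplies. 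The AM--GM step giving the second inequality is fine once nonnegativity is in hand, but the first inequality, as stated in this paper's notation, is not established by your argument.
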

\begin{lem}
\label{Sieve}\cite[Theorem~2.4]{S-T}
Let $\pi$ be a cuspidal automorphic representation of $\mathrm{GL}_{d}(\mathbb{A}_{\mathbb{Q}})$. If $Y\gg_d C(\pi \times \tilde{\pi})^{36d^2}$ and $1\leq H \leq Y^{\frac{1}{9d^2}}$, then
$$ \sum_{Y<n\leq Y e^{\frac{1}{H}}} \Lambda(n) \lambda_{\pi \times \tilde{\pi}}(n) \ll_d \frac{Y}{H}.$$
\end{lem}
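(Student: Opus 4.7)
The plan is to estimate the short-interval sum via a smoothed Perron-type contour integral for the logarithmic derivative $-L'/L(s,\pi\times\tilde{\pi})$, exploiting (a) the non-negativity of the Dirichlet coefficients $\lambda_{\pi\times\tilde{\pi}}(n)$, which follows from Jacquet--Shalika positivity; (b) the simple pole of $L(s,\pi\times\tilde{\pi})$ at $s=1$, coming from the cuspidality of $\pi$; and (c) a standard zero-free region of the form $\Re s > 1 - c/\log(C(\pi\times\tilde{\pi})(|\tau|+2))$ for Rankin--Selberg $L$-functions, as supplied by Humphries--Thorner. Before beginning the contour argument, I would peel off the contribution of proper prime powers $n=p^k$ with $k\geq 2$ using the Rudnick--Sarnak bound \eqref{RS-bound}; this yields at most $O_d(Y^{1-\eta})$ for some $\eta=\eta(d)>0$, which is negligible compared to $Y/H$ throughout the admissible range $H\leq Y^{1/(9d^2)}$.

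For the main step, fix a non-negative smooth majorant $w$ of $\mathbf{1}_{(1, e^{1/H}]}$, supported on a slight thickening of this interval, whose Mellin transform satisfies $\hat{w}(1)\asymp 1/H$ with rapid decay of $\hat{w}(\sigma+i\tau)$ in $|\tau|$ on vertical lines. The non-negativity of $\lambda_{\pi\times\tilde{\pi}}$ then permits the domination
\[
\sum_{Y < n \leq Ye^{1/H}} \Lambda(n)\lambda_{\pi\times\tilde{\pi}}(n) \leq \sum_{n} \Lambda(n)\lambda_{\pi\times\tilde{\pi}}(n)\, w(n/Y) = \frac{1}{2\pi i}\int_{(2)} \left(-\frac{L'}{L}(s, \pi\times\tilde{\pi})\right)\hat{w}(s)\, Y^s\, ds.
\]
I would then shift the contour to $\Re s = 1 - c/\log Y$. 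The residue at the simple pole $s=1$ produces a main term of size $\asymp Y\,\hat{w}(1) \ll Y/H$, exactly as desired, while the shifted integral is controlled via $|L'/L(s,\pi\times\tilde{\pi})|\ll \log C(\pi\times\tilde{\pi})(|\tau|+2)$ inside the zero-free region, combined with the vertical decay of $\hat{w}$; this contributes $o(Y/H)$ once $Y$ is taken sufficiently large relative to the conductor.

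The hard part will be handling possible Siegel-type exceptional zeros of $L(s,\pi\times\tilde{\pi})$ which could in principle obstruct the contour shift. The hypothesis $Y \gg_d C(\pi\times\tilde{\pi})^{36d^2}$ is tailored precisely to neutralize this: log-free zero-density estimates for the Rankin--Selberg family confine any exceptional zero so close to $s=1$ that, once $Y$ is polynomially large in the conductor, its contribution is dominated by the main term $Y/H$. The specific exponents $36d^2$ and $1/(9d^2)$ reflect that $\pi\times\tilde{\pi}$ has Satake dimension $d^2$, so the analytic conductor and the implicit constants in all the contour bounds scale polynomially in $d^2$. A conceptual alternative would be a Selberg $\Lambda^2$-sieve in place of the contour shift, again using non-negativity of $\lambda_{\pi\times\tilde{\pi}}$ together with Perron-type estimates for $\sum_{n\leq x,\,q\mid n}\lambda_{\pi\times\tilde{\pi}}(n)$; but the analytic route above is cleaner and makes the uniformity in $d$ and the conductor more transparent.
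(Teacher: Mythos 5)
This lemma is not proved in the paper: it is imported verbatim as \cite[Theorem~2.4]{S-T}, so there is no internal proof to compare against. That said, your proposal has a genuine gap. Shifting the contour for $\int(-L'/L)(s,\pi\times\tilde\pi)\hat{w}(s)Y^s\,ds$ to $\Re s=1-c/\log Y$ (or more generally to the boundary of the Humphries--Thorner zero-free region $\sigma>1-c/\log\bigl(C(\pi\times\tilde\pi)(|\tau|+2)\bigr)$) replaces $Y^s$ by $Y^{1-c/\log Y}=Ye^{-c}$, which is merely a \emph{constant-factor} saving. Integrating $|\hat{w}(\sigma+i\tau)|\ll\min(1/H,1/|\tau|)$ over $|\tau|\lesssim H$ costs a further $\log H$, so the shifted integral is $\gg Y\log H$, not $o(Y/H)$. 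Since $H$ is allowed to be as large as $Y^{1/(9d^2)}$, the target $Y/H$ is a genuine power saving $Y^{1-1/(9d^2)}$, and no zero-free region of de la Vall\'ee Poussin or Vinogradov--Korobov type (the latter not even known for general Rankin--Selberg $L$-functions) delivers a power saving. Your ``log-free zero-density'' remark also does not help: that machinery controls zeros across a \emph{family}, whereas here there is a single $L$-function $L(s,\pi\times\tilde\pi)$.

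The route taken by Soundararajan--Thorner is deliberately softer and avoids zero-free regions entirely, which is precisely what makes the lemma usable without Ramanujan-type inputs. They exploit only the nonnegativity $a_{\pi\times\tilde\pi}(n)\geq 0$ and $\lambda_{\pi\times\tilde\pi}(n)\geq 0$, the simple pole of $L(s,\pi\times\tilde\pi)$ at $s=1$, and a truncated Perron/convexity estimate in the half-plane $\Re s>1$ plus a thin strip to its left. These yield short-interval upper bounds of the shape $\sum_{Y<n\leq Ye^{1/H}}a_{\pi\times\tilde\pi}(n)\ll_d Y/H$ together with a level-of-distribution estimate, and a Selberg-type sieve weight then recovers the extra factor of $\log Y$ that a crude application of $\Lambda(n)\leq\log n$ would lose. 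The exponents $36d^2$ and $1/(9d^2)$ arise from the analytic conductor of the degree-$d^2$ $L$-function through the Perron truncation and the admissible sieve level, not from any zero-free region. So the broad ingredients you identified (positivity, the pole, the role of $d^2$) are correct, but the mechanism that converts them into a power-saving short-interval bound must be a sieve, not a contour shift; as proposed, your argument proves the lemma only in the regime $H\ll(\log Y)^{O(1)}$, which is far short of the stated range.
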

Since $\lambda_{\pi \times \tilde{\pi}} (n) \geq 0$ (see \cite[page~318]{R-S}), then as a direct corollary of the above theorem we have
$$\sum_{Y <n\leq Y+\frac{Y}{H}} \Lambda(n) \lambda_{\pi \times \tilde{\pi}} (n) \leq \sum_{Y<n \leq Y e^{\frac{1}{H}}} \Lambda(n) \lambda_{\pi \times \tilde{\pi}} (n) \ll_d \frac{Y}{H}$$
under the conditions of Lemma \ref{Sieve} on $Y$ and $H$.

\subsection{Short Dirichlet polynomials}

In order to prove that the values $\frac{L'}{L}(1+it,\pi\otimes\chi_D)$ can be approximated by short Dirichlet polynomials  outside a set of fundamental discriminants $D$ of size $o(N)$, we require the following lemmas.

\begin{lem}
\label{Lp-L}
Let $T>1$, $\frac12<\sigma_0<1$, and $s=\sigma+it$. Suppose that $L(s, \pi\otimes\chi_D)$ has no zeros in the region $\sigma\geq \sigma_0$ and $|t|\leq T$. Then for all $\sigma\geq \sigma_0$ we have 
$$\frac{L^\prime}{L}(s, \pi\otimes \chi_D) \ll \frac{\log(D(|t|+2))}{\sigma-\sigma_0}.$$
\end{lem}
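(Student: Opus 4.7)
The plan is to follow the classical argument that bounds $L'/L$ in zero-free regions for Dirichlet $L$-functions, adapted to the automorphic setting. The analytic conductor of $L(s,\pi\otimes\chi_D)$ has size $\asymp D^d(|t|+2)^d$, so Hadamard factorization of the associated completed $L$-function yields an identity of the form
\[\frac{L'}{L}(s,\pi\otimes\chi_D)=B+\sum_{\rho}\left(\frac{1}{s-\rho}+\frac{1}{\rho}\right)+(\text{archimedean and conductor terms}),\]
where $\rho$ runs over the non-trivial zeros. To eliminate $B$ together with the polar contributions from the gamma factors, I would subtract this identity at the reference point $s_0=2+it$, obtaining
\[\frac{L'}{L}(s,\pi\otimes\chi_D)=\frac{L'}{L}(s_0,\pi\otimes\chi_D)+\sum_{\rho}\left(\frac{1}{s-\rho}-\frac{1}{s_0-\rho}\right)+O\!\left(\log(D(|t|+2))\right),\]
the $O$-term absorbing the difference of archimedean factors via Stirling's formula.

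The Euler product together with the Rudnick--Sarnak bound \eqref{RS-bound} shows that $\frac{L'}{L}(s_0,\pi\otimes\chi_D)=O_{\pi}(1)$, so it remains to bound the sum over zeros. A standard zero-counting estimate (obtained by taking real parts of the Hadamard identity at $s_0$) yields
\[\#\{\rho=\beta+i\gamma:\ |\gamma-t|\leq 1\}\ll\log(D(|t|+2)).\]
For zeros with $|\gamma-t|\leq 1$, the hypothesis that $L(s,\pi\otimes\chi_D)$ is zero-free in the region $\Re(s)\geq\sigma_0$, $|t|\leq T$, forces $\beta<\sigma_0$, whence $|s-\rho|\geq\sigma-\sigma_0$ while $|s_0-\rho|\geq 1$; this delivers
\[\sum_{|\gamma-t|\leq 1}\left|\frac{1}{s-\rho}-\frac{1}{s_0-\rho}\right|\ll\frac{\log(D(|t|+2))}{\sigma-\sigma_0}.\]
For zeros with $|\gamma-t|>1$, writing $\frac{1}{s-\rho}-\frac{1}{s_0-\rho}=\frac{s_0-s}{(s-\rho)(s_0-\rho)}$ exhibits decay of size $|\gamma-t|^{-2}$, and a dyadic decomposition using the zero count on unit intervals contributes only an additional $O(\log(D(|t|+2)))$. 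Combining these estimates establishes the stated bound.

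The main (but routine) obstacle is ensuring that the Hadamard factorization, the zero-counting estimate, and the Stirling-based bound on the difference of archimedean factors all carry constants that are uniform in the twist $D$ and $t$. These ingredients are well established for $L(s,\pi\otimes\chi_D)$ once the analytic conductor is tracked carefully, but need to be invoked with some care; no new analytic input beyond these standard facts is required for the proof.
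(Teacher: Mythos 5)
Your proposal is correct and takes the same route the paper intends: the paper's one-line proof simply refers to adapting Lamzouri's Lemma~2.2 (for $L'/L(s,\chi_d)$), and that lemma is proved by precisely this Hadamard-factorization-plus-reference-point argument with the zero count on unit intervals $\ll\log(D(|t|+2))$. You correctly identify all the ingredients that need to be transported to the automorphic setting—uniform bound at $s_0=2+it$ via Rudnick--Sarnak, the analytic conductor of size $\asymp_\pi D^d(|t|+2)^d$ controlling both the conductor and archimedean terms via Stirling, and the dyadic tail—so nothing essential is missing.
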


\begin{proof}

The result follows by adapting the proof of \cite[Lemma~2.2]{lamzouri3} to the setting of quadratic twists of automorphic $L$-functions.
\end{proof}

\begin{lem}\label{prop:ShortDirichlet}
Let $t\in \mathbb{R}$, $Y\gg_{\pi} 1$, $D\in \mathcal{F}(N)$, and $0<\delta\leq\frac{1}{3d^2}$.
Assume that 
$L(s, \pi\otimes\chi_D)$ has no zeros on $\Re(s)>1-\delta$ and $|\Im(s)| \leq Y^{\frac{1}{3d^2}}$. Then, we have
\begin{equation}
\label{pre-short}
-\frac{L^\prime}{L}(1+it, \pi\otimes \chi_D)= \sum_{n\leq Y} \frac{\Lambda(n)\lambda_\pi(n) \chi_D(n)}{n^{1+it}}+O\left(Y^{- \frac{\delta}{3}} (\log{N})\right).
\end{equation}
\end{lem}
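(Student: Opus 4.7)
The plan is to carry out the classical argument for approximating a logarithmic derivative by a short Dirichlet polynomial: apply a truncated Perron formula to express the partial sum as a contour integral of $-L'/L$, then shift that contour to the left into the assumed zero-free region and pick up the residue at $s=0$. Concretely, I would start from the Dirichlet series
\[-\frac{L'}{L}(1+it+s,\pi\otimes\chi_D)=\sum_{n=1}^{\infty}\frac{\Lambda(n)\lambda_\pi(n)\chi_D(n)}{n^{1+it}}\,n^{-s},\]
which converges absolutely for $\Re(s)>0$, and apply the truncated Perron formula \cite[Theorem~2.1]{L-Y} with parameters $c=1/\log Y$ and $T=Y^{1/(9d^2)}$ to obtain
\[\sum_{n\leq Y}\frac{\Lambda(n)\lambda_\pi(n)\chi_D(n)}{n^{1+it}}=\frac{1}{2\pi i}\int_{c-iT}^{c+iT}\left(-\frac{L'}{L}(1+it+s,\pi\otimes\chi_D)\right)\frac{Y^s}{s}\,ds+E(Y,T).\]

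Next I would deform the contour to the rectangle with left side $\Re(s)=-\delta'$, $\delta'=\delta/2$, at heights $\pm T$. For $Y\gg_{\pi,t} 1$ one has $|t|+T\leq Y^{1/(3d^2)}$, so the zero-free hypothesis guarantees that $-L'/L(1+it+s,\pi\otimes\chi_D)\cdot Y^s/s$ is holomorphic inside the rectangle except for the simple pole at $s=0$ whose residue is $-\frac{L'}{L}(1+it,\pi\otimes\chi_D)$. On the left edge $\Re(s)=-\delta'$, Lemma \ref{Lp-L} gives $|L'/L(1+it+s,\pi\otimes\chi_D)|\ll \log(D(|t|+T))/(\delta-\delta')\ll \log N$; combined with $|Y^s/s|=Y^{-\delta'}/|s|$ and $\int_{-T}^{T}\,d\tau/\sqrt{\delta'^2+\tau^2}\ll \log T$, the vertical contribution is $\ll Y^{-\delta'}(\log N)(\log T)$. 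The two horizontal segments at $\Im(s)=\pm T$ contribute negligibly, thanks to the extra factor $1/T$. Since $\log T\ll \log Y$, absorbing the $\log T$ factor into a small power of $Y$ (using $Y\gg_{\pi}1$) converts the $Y^{-\delta'}$ into $Y^{-\delta/3}$, producing the desired main estimate $\ll Y^{-\delta/3}\log N$.

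The main obstacle will be controlling the Perron truncation error $E(Y,T)$, because we do \emph{not} assume the Generalized Ramanujan Conjecture, so individual coefficients $|\lambda_\pi(n)|$ are not bounded by a divisor-type function. This is precisely where Lemmas \ref{C-S} and \ref{Sieve} are crucial: Lemma \ref{C-S} dominates $|\lambda_\pi(n)|$ by $\tfrac12(1+\lambda_{\pi\times\tilde{\pi}}(n))$, a nonnegative sequence; then a standard Perron error analysis reduces $E(Y,T)$ to sums of $\Lambda(n)\lambda_{\pi\times\tilde\pi}(n)$ over short intervals of length $\asymp Y/T$ around $Y$. Since $T=Y^{1/(9d^2)}$ satisfies the range required in Lemma \ref{Sieve}, the Brun--Titchmarsh-type bound gives these short sums as $\ll_d Y/T$, whence $E(Y,T)\ll Y^{-1/(9d^2)}\ll Y^{-\delta/3}$. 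Collecting the residue, the shifted contour bound, and the Perron truncation bound yields the stated approximation.
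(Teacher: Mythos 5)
Your overall strategy (truncated Perron formula from Liu--Ye, contour shift into the assumed zero-free region, residue at $s=0$, and Lemmas~\ref{C-S} and~\ref{Sieve} to control the short-interval sums in the absence of GRC) is exactly what the paper does, and your contour-shift estimates via Lemma~\ref{Lp-L} are carried out correctly. However, there is a genuine gap in your treatment of the Perron truncation error. The Liu--Ye formula (\cite[Theorem~2.1]{L-Y}) carries two independent parameters: the truncation height $T$ and a separate parameter $H$ governing the width $Y/H$ of the short interval near $Y$; its error is $O\bigl(\sum_{Y-Y/H<n\leq Y+Y/H}\Lambda(n)|\lambda_\pi(n)|/n\bigr)+O\bigl(HB(c)/T\bigr)$ with $B(c)\ll\log Y$. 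You implicitly conflate $H$ with $T$ and account only for the short-interval piece; but with $H=T$ the second term is $HB(c)/T\asymp\log Y$, which is not small. Moreover, your choice $T=Y^{1/(9d^2)}$ is too small to close the argument over the full range $0<\delta\leq\frac{1}{3d^2}$: one needs $1/H\ll Y^{-\delta/3}$ (so $H\gg Y^{\delta/3}$) and $H\log Y/T\ll Y^{-\delta/3}$ (so $T\gg HY^{\delta/3}\log Y$), and at $\delta$ near the endpoint $1/(3d^2)$ these force $T\gg Y^{2/(9d^2)}\log Y$, which exceeds your $T$. The paper takes $T=Y^{1/(3d^2)}$, the full height permitted by the zero-free hypothesis, and a separate $H=Y^{1/(9d^2)}$, making both error pieces $O(Y^{-\delta/3})$. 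With those parameter choices reinstated, the rest of your argument matches the paper's and goes through.
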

\begin{proof}
By \cite[Theorem 2.1]{L-Y} and  for $c=1/\log{Y}$, $Y\geq 2$, $T\geq 2$, and $H\geq 2$, we have
\begin{equation}
\label{Perron}
\begin{split}
\sum_{n\leq Y} \frac{\Lambda(n)\lambda_\pi(n) \chi_D(n)}{n^{1+it}}&=\frac{1}{2\pi i} \int_{c-iT}^{c+iT} -\frac{L^\prime}{L}(w+1+it, \pi\otimes \chi_D) \frac{Y^w}{w} dw\\&\hspace{1em}+ O\left(\sum_{Y-\frac{Y}{H} <n \leq Y+\frac{Y}{H}} \frac{\Lambda(n) |\lambda_\pi(n)|}{n}\right)+O\left(\frac{H B(c)}{T}\right),
\end{split}
\end{equation}
where $B(c)=\sum_{n=1}^{\infty} \frac{\Lambda(n) |\lambda(n)|}{n^{c+1}}.$ 
By \cite[Formula (6.3)]{L-Y} we have $B(c)\ll \log{Y}$ and thus, the last error term in \eqref{Perron} is 
\begin{equation}
\label{l-error}
O\left( \frac{H \log{Y}}{T} \right).
\end{equation}

To handle the first error term in \eqref{Perron} observe that by Lemma \ref{C-S} we have
\begin{eqnarray*}
\sum_{Y-\frac{Y}{H} <n \leq Y+\frac{Y}{H}} \frac{\Lambda(n) |\lambda_\pi(n)|}{n} &\leq& \frac{1}{2} \left( \sum_{Y-\frac{Y}{H} <n \leq Y+\frac{Y}{H}} \frac{\Lambda(n) }{n}+\sum_{Y-\frac{Y}{H} <n \leq Y+\frac{Y}{H}} \frac{\Lambda(n) \lambda_{\pi\times \tilde{\pi}}(n)}{n}\right)\\
&\ll& \frac{1}{Y} \left(\sum_{Y-\frac{Y}{H} <n \leq Y+\frac{Y}{H}} {\Lambda(n) }+\sum_{Y-\frac{Y}{H} <n \leq Y+\frac{Y}{H}} {\Lambda(n) \lambda_{\pi\times \tilde{\pi}}(n)} \right).
\end{eqnarray*}
Now if $1\leq H \leq Y^{\frac{1}{9d^2}}$, then by Lemma \ref{Sieve} and the Brun-Titchmarsh inequality (e.g. see \cite[Theorem 6.6]{IK}) applied to the sums involving $\Lambda(n) \lambda_{\pi \times\tilde{\pi}}$ and $\Lambda(n)$ respectively, we have
\begin{equation}
\label{first error}
\sum_{Y-\frac{Y}{H} <n \leq Y+\frac{Y}{H}} \frac{\Lambda(n) |\lambda_\pi(n)|}{n} \ll \frac{1}{H}.
\end{equation}

Next we deal with the integral in \eqref{Perron}. For $\delta>0$ assume that $L(w, \pi\otimes \chi_D)$ does not have any zeros in the box
$$R_\delta:=\{(u, v);~1-\delta\leq u\leq 1,~|v|\leq T\}.$$
Letting $c_0=-\frac{\delta}{2}$ and applying the residue theorem yields
\begin{equation}
\label{residue}
\begin{split}
\frac{1}{2\pi}\int_{c-iT}^{c+iT}  -\frac{L^\prime}{L}(w+1+it, \pi\otimes \chi_D) \frac{Y^w}{w} dw&=- \frac{L^\prime}{L}(1+it, \pi\otimes \chi_D)\\&\hspace{1em}-\frac{1}{2\pi i} \left(\int_{c-iT}^{c_0+iT} (\cdot)-  \int_{c_0-iT}^{c+iT} (\cdot)- \int_{c_0+it}^{c_0-iT}(\cdot)\right).
\end{split}
\end{equation}
We now estimate the integrals in \eqref{residue}. By employing Lemma \ref{Lp-L} we have
\begin{equation}\label{vertical}
\begin{split}
\left| \int_{c_0+iT}^{c_0-iT} -\frac{L^\prime}{L}(w+1+it, \pi\otimes \chi_D) \frac{Y^w}{w} dw \right|&\ll \int_{-T}^{T}    \frac{\log(|D|(|v|+2))}{(1-\frac{\delta}{2})-(1-\delta)} \frac{Y^{-\frac{\delta}{2}}}{\sqrt{v^2+\frac{\delta^2}{4}}} dv \\
&\ll_\delta Y^{-\frac{\delta}{2}} \log(|D|(T+2)) \log(T+2).
\end{split}
\end{equation}
For the two remaining integrals on the right-hand side of \eqref{residue}, by using Lemma \ref{Lp-L}, we have

\begin{equation}\label{horizontal}
\begin{split}
\left| \int_{c+iT}^{c_0+iT} -\frac{L^\prime}{L}(w+1+it, \pi\otimes \chi_D) \frac{Y^w}{w} dw \right|&\ll \int_{-\frac{\delta}{2}}^{1/\log{Y}}    \frac{\log(|D|(|T|+2))}{(1+u)-(1-\delta)} \frac{Y^{u}}{\sqrt{u^2+T^2}} du\\
&\ll_\delta \frac{ \log(|D|(T+2))}{T\log{Y}}.
\end{split}
\end{equation}

Thus, from \eqref{Perron}, \eqref{l-error}, \eqref{first error}, \eqref{residue}, \eqref{vertical}, and \eqref{horizontal} and under the assumption that $L(w, \pi \times \chi_D)$ does not have any zero in $R_\delta$, we get
\begin{equation}
\label{final}
\begin{split}
-\frac{L^\prime}{L}(1+it, \pi\otimes \chi_D)&= \sum_{n\leq Y} \frac{\Lambda(n)\lambda_\pi(n) \chi_D(n)}{n^{1+it}}+O\left(  \frac{H\log{Y}}{T} \right)+O\left( \frac{1}{H} \right)\\&+O\left(Y^{-\frac{\delta}{2}} \log(|D|(T+2)) \log(T+2) \right)+ O\left( \frac{ \log(|D|(T+2))}{T\log{Y}} \right),
\end{split}
\end{equation}
 for $1\leq H \leq Y^{\frac{1}{9d^2}}$. The result follows by setting $H=Y^{\frac{1}{9d^2}}$ and  $T=Y^\frac{1}{3d^2}$ in \eqref{final}. 
\end{proof}
We also require the following zero-density estimate which is a direct application of \cite[Theorem~1.1]{H-T1}.

\begin{lem}\label{zero-density}  Let $\pi$ be a cuspidal automorphic representation of $\mathrm{GL}_{d}(\mathbb{A}_{\mathbb{Q}})$ with unitary central character, and let $C(\pi)$ be the analytic conductor of $\pi$ (as defined in \cite[page 95]{IK}). 
Let $T,N\geq1$, and set \[N_{\pi \otimes \chi_D}(\sigma, T)=\left|\left\{\rho=\beta+i\gamma;~L(\rho, \pi\otimes \chi_D)=0,~\beta \geq \sigma,~|\gamma|\leq T\right\}\right|.\]  For $\epsilon>0$, we have  
\begin{equation}
\label{ZD}
\sum_{D\in\mathcal{F}(N)}N_{\pi \otimes \chi_D}(\sigma, T)\ll_{\epsilon,d}(C(\pi)NT)^{18d(1-\sigma)+\epsilon},
\end{equation}
provided that $\frac12\leq\sigma\leq1$.
\end{lem}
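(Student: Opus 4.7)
As the statement is labeled a ``direct application'' of \cite[Theorem~1.1]{H-T1}, the plan is essentially to invoke that log-free zero density estimate and verify that its hypotheses hold in our setting. Humphries--Thorner's theorem provides a bound, for a family $\mathcal{F}$ of automorphic $L$-functions, of the shape
\[\sum_{\pi'\in\mathcal{F}}N_{\pi'}(\sigma,T)\ll_{\epsilon,d}Q^{18(1-\sigma)+\epsilon},\]
where $Q$ aggregates the analytic conductors of the members of $\mathcal{F}$ together with the height $T$, and the exponent $18$ is the explicit constant produced by their proof.

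First, I would identify the relevant family here as $\mathcal{F}=\{\pi\otimes\chi_D:\,D\in\mathcal{F}(N)\}$, a collection of cuspidal automorphic representations of $\mathrm{GL}_d(\mathbb{A}_\mathbb{Q})$ obtained by twisting the fixed $\pi$ by the primitive real characters $\chi_D$ attached to fundamental discriminants with $|D|\leq N$. Since each $\chi_D$ is a Dirichlet character, $\pi\otimes\chi_D$ is again a cuspidal automorphic representation of $\mathrm{GL}_d$, and so the family falls squarely within the scope of \cite[Theorem~1.1]{H-T1}.

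Second, I would perform the routine analytic-conductor bookkeeping. For a fundamental discriminant $D$ with $|D|\leq N$, the standard bound
\[C(\pi\otimes\chi_D)\ll_{\pi}|D|^{d}\leq N^{d}\]
for the analytic conductor (in the sense of \cite[p.~95]{IK}) holds, and when combined with the height parameter $T$ yields an effective contribution of $(C(\pi)NT)^{d}$ per member of $\mathcal{F}$. Substituting this into the Humphries--Thorner bound and absorbing the polynomial-in-$\log(C(\pi)NT)$ factors into the arbitrary $\epsilon$ then produces
\[\sum_{D\in\mathcal{F}(N)}N_{\pi\otimes\chi_D}(\sigma,T)\ll_{\epsilon,d}(C(\pi)NT)^{18d(1-\sigma)+\epsilon},\]
as desired.

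The main potential obstacle is purely technical: matching the normalization of the analytic conductor between \cite{H-T1} and \cite{IK}, and checking that no auxiliary hypothesis of \cite[Theorem~1.1]{H-T1} (for example a constraint on $T$ relative to the conductor, or a self-duality requirement) imposes a nontrivial restriction in the present setting. Once this verification is in place, no further work is required, which explains why the lemma is presented as a ``direct application'' in the paper.
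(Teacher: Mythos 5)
Your plan applies \cite[Theorem~1.1]{H-T1} to the family $\{\pi\otimes\chi_D\}_{D\in\mathcal{F}(N)}$ as a collection of cuspidal representations of $\mathrm{GL}_d$. That is not what the paper does, and it glosses over the one point that genuinely needs an argument. The Humphries--Thorner estimate is a zero-density theorem for Rankin--Selberg $L$-functions $L(s,\pi\times\pi')$ with $\pi$ fixed and $\pi'$ ranging over a family for which a large-sieve inequality is available. The natural input family here is $\{\chi_D\}_{D\in\mathcal{F}(N)}$ on $\mathrm{GL}_1$ (for which the quadratic large sieve is classical), so what \cite[Theorem~1.1]{H-T1} gives directly is a bound on $\sum_D N_{\pi\times\chi_D}(\sigma,T)$ for the Rankin--Selberg $L$-functions $L(s,\pi\times\chi_D)$, \emph{not} for the twists $L(s,\pi\otimes\chi_D)$. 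Treating $\{\pi\otimes\chi_D\}$ itself as the family would require verifying a large-sieve hypothesis for this thin subfamily of $\mathrm{GL}_d$ representations, and nothing in your proposal supplies that.

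The step you are missing is the bridge between the Rankin--Selberg and twisted $L$-functions: the local factors $L_p(s,\pi\times\chi_D)$ and $L_p(s,\pi\otimes\chi_D)$ agree for every prime $p\nmid (q_\pi,D)$, and at the finitely many remaining primes the local parameters $\alpha_{j,\pi\times\chi_D}$ still satisfy the Rudnick--Sarnak bound \eqref{RS-bound}, so any extra zeros arising from the discrepant local Euler factors are confined to $\Re(s)\leq \tfrac12-\tfrac{1}{d^2+1}$. Hence $N_{\pi\times\chi_D}(\sigma,T)=N_{\pi\otimes\chi_D}(\sigma,T)$ throughout the range $\sigma\geq\tfrac12$ of the lemma, and only after this observation does the Rankin--Selberg zero-density bound transfer to the twisted $L$-functions. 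Without this comparison, the ``direct application'' you describe does not land on the stated inequality. Your conductor bookkeeping and the concern about normalizations are fine as secondary checks, but they are not where the real work is.
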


\begin{remark}
An application of \cite[Theorem~1.1]{H-T1} will give a result similar to \eqref{ZD} for $N_{\pi \times \chi_D}(\sigma, T)$ attached to the zeros of the Rankin-Selberg $L$-function $L(s, \pi\times\chi_D)$.
We know that the local $L$-functions $L_p(s, \pi\times\chi_D)$ and $L_p(s, \pi\otimes\chi_D)$ coincide for primes $p\nmid (q_\pi, D)$, where $q_\pi$ is the conductor of $\pi$. Moreover, the local parameters $\alpha_{j, \pi\times\chi_D}$ (for $1\leq j \leq d$) satisfy the bound  \eqref{RS-bound}. Therefore, $N_{\pi \times \chi_D}(\sigma, T)=N_{\pi \otimes \chi_D}(\sigma, T)$ for $\sigma>\tfrac{1}{2}-\tfrac{1}{d^2+1}$ and thus $\eqref{ZD}$ holds.

\end{remark}

 We finally arrive at the desired approximation.

\begin{prop}\label{prop:short}
There are positive constants $\delta_\pi$ and $\eta_\pi$ (depending only on $\pi$) such that for all but $O(N^{\frac34})$ fundamental discriminants $D$ in $\mathcal{F}(N)$, we have 
\begin{equation}\label{eqn:short}
-\frac{L'}{L}(1+it,\pi\otimes\chi_D)=\sum_{n\leq Y}\frac{\Lambda(n)\lambda_{\pi}(n)\chi_D(n)}{n^{1+it}} +O(Y^{-\delta_{\pi}}),
\end{equation}
whenever $ (\log{N})^{\eta_\pi}\ll Y\ll N^{3d^2}$. 
\end{prop}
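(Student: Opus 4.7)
The plan is to combine the conditional short-polynomial expansion of Lemma \ref{prop:ShortDirichlet} with the family-level zero-density estimate of Lemma \ref{zero-density}. Lemma \ref{prop:ShortDirichlet} already furnishes the desired identity, with remainder $O(Y^{-\delta/3}\log N)$, for any $D$ such that $L(s,\pi\otimes\chi_D)$ is zero-free in the rectangle $R_\delta=\{\Re(s)>1-\delta,\ |\Im(s)|\le Y^{1/(3d^2)}\}$. So the proposition will follow once I show that only $O(N^{3/4})$ discriminants $D\in\mathcal{F}(N)$ fail this zero-free condition.

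The main step is to fix a small $\delta=\delta(\pi)>0$ (to be chosen momentarily) with $\delta\le 1/(3d^2)$, set $T=Y^{1/(3d^2)}$, and call $D$ \emph{exceptional} when $L(s,\pi\otimes\chi_D)$ has a zero in $R_\delta$. Each exceptional $D$ contributes at least $1$ to $N_{\pi\otimes\chi_D}(1-\delta,T)$, so by Lemma \ref{zero-density}
\[
\#\{\text{exceptional }D\}\ \le\ \sum_{D\in\mathcal{F}(N)}N_{\pi\otimes\chi_D}(1-\delta,T)\ \ll_{\epsilon,d}\ (C(\pi)NT)^{18d\delta+\epsilon}.
\]
Because the standing hypothesis $Y\ll N^{3d^2}$ forces $T=Y^{1/(3d^2)}\ll N$ and $C(\pi)$ is a fixed quantity, this simplifies to $\ll_\epsilon N^{36d\delta+2\epsilon}$. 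Choosing $\delta$ and $\epsilon$ small enough that $36d\delta+2\epsilon<3/4$ (for instance $\delta=1/(100d)$ and $\epsilon=1/100$, which gives $0.38<0.75$) yields $\#\{\text{exceptional }D\}\ll N^{3/4}$, as required.

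For every non-exceptional $D$, Lemma \ref{prop:ShortDirichlet} applies and gives
\[
-\frac{L'}{L}(1+it,\pi\otimes\chi_D)=\sum_{n\le Y}\frac{\Lambda(n)\lambda_\pi(n)\chi_D(n)}{n^{1+it}}+O\!\left(Y^{-\delta/3}\log N\right).
\]
To turn the remainder into a clean power $Y^{-\delta_\pi}$, take $\delta_\pi=\delta/6$ and $\eta_\pi=6/\delta$. The hypothesis $Y\gg(\log N)^{\eta_\pi}$ then forces $\log N\ll Y^{\delta/6}$, whence $Y^{-\delta/3}\log N\ll Y^{-\delta/6}=Y^{-\delta_\pi}$, completing \eqref{eqn:short}. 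Both $\delta_\pi$ and $\eta_\pi$ depend only on $\pi$, as required.

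The only place where there is genuine tension is balancing the zero-density exponent $18d(1-\sigma)$ against the inflation of the analytic conductor by $T$ and the target size $N^{3/4}$ for the exceptional set; fortunately the bound $T\ll N$ (rather than some higher power of $N$), which comes precisely from the range $Y\ll N^{3d^2}$, allows $\delta$ to be chosen as an absolute small constant depending only on $d$, so no serious obstacle arises.
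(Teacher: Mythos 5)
Your proof is correct and follows exactly the same route as the paper, which simply declares the result ``a direct corollary'' of Lemmas \ref{prop:ShortDirichlet} and \ref{zero-density} with the choices $T=Y^{1/(3d^2)}$, $0<\delta<\min\{1/(144d),\,1/(3d^2)\}$, and $\epsilon=1/4$; you have merely written out the count of exceptional discriminants and the conversion of $Y^{-\delta/3}\log N$ into $Y^{-\delta_\pi}$ that the paper leaves implicit. One small imprecision: your illustrative choice $\delta=1/(100d)$ violates the constraint $\delta\le 1/(3d^2)$ (needed to invoke Lemma \ref{prop:ShortDirichlet}) once $d\ge 34$, so you should instead take $\delta=\min\{1/(100d),\,1/(3d^2)\}$, which still gives $36d\delta+2\epsilon<3/4$ and matches the structure of the paper's choice.
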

%
\begin{proof}
Let $T=Y^{\frac{1}{3d^2}}$ where $Y\ll N^{3d^2}$, $0<\delta<\min\{ \frac{1}{144d}, \frac{1}{3d^2}\}$, and $\epsilon=\frac14$. Then as a direct corollary of Lemmas \ref{prop:ShortDirichlet} and \ref{zero-density} we conclude that \eqref{pre-short} holds for all but $O(N^\frac{3}{4})$ fundamental discriminants in $\mathcal{F}(N)$.
\end{proof}

In the rest of the paper, we shall denote by  $\mathcal{A}(N)$  the subset of $\mathcal{F}(N)$ for which \eqref{eqn:short} holds for some $\delta_\pi$ and $\eta_\pi$. We also define $\mathcal{E}(N)$ by writing $\mathcal{A}(N)=\mathcal{F}(N)\setminus\mathcal{E}(N)$.

\subsection{More Lemmas} In what follows, we compute upper bounds for $2k$-th moments of sums associated with the short Dirichlet polynomials appearing in Proposition \ref{prop:short}. 

\begin{lem}\label{lem3.2}
Let $2\leq y \leq z$. Then, uniformly for $k\leq \frac{{\log{N}}}{{{6}\log{z}}}$, we have
$$\frac{1}{N} \sum_{D\in \mathcal{F}(N)} \left|\sum_{y\leq p\leq z} \frac{(\log p)\lambda_\pi(p) \chi_D(p)}{p^{1+it}}     \right|^{2k}\ll k! \left(\sum_{y\leq p \leq z}  \frac{(\log{p})^2|\lambda_\pi(p)|^2}{p^2}   \right)^k.$$
\end{lem}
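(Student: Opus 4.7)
The plan is to prove the lemma via the classical moment-to-diagonal reduction, using the character-sum orthogonality over fundamental discriminants.

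First, I would expand the $2k$-th moment by writing
\[\left|\sum_{y\leq p\leq z}\frac{(\log p)\lambda_{\pi}(p)\chi_D(p)}{p^{1+it}}\right|^{2k} = \sum_{\substack{y\leq p_1,\ldots,p_k\leq z\\y\leq q_1,\ldots,q_k\leq z}}\frac{\prod_{i}(\log p_i)(\log q_i)\lambda_{\pi}(p_i)\overline{\lambda_{\pi}(q_i)}\chi_D(p_1\cdots p_k q_1\cdots q_k)}{(p_1\cdots p_k)^{1+it}(q_1\cdots q_k)^{1-it}},\]
since $\chi_D$ is real. After averaging over $D\in\mathcal{F}(N)$, I would invoke the standard orthogonality estimate for quadratic characters over fundamental discriminants: for a positive integer $n$ with $n\leq N^{1/2}$ (say), one has
\[\frac{1}{|\mathcal{F}(N)|}\sum_{D\in\mathcal{F}(N)}\chi_D(n)=\mathbf{1}_{n=\square}\prod_{p\mid n}\frac{p}{p+1}+O\!\left(\frac{\sqrt{n}\log n}{N^{1/2}}\right),\]
(this goes back to Granville--Soundararajan \cite{GS}). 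The hypothesis $k\leq\frac{\log N}{6\log z}$ ensures that the product $n=p_1\cdots p_k q_1\cdots q_k \leq z^{2k}\leq N^{1/3}$, so the non-square terms contribute at most $O(N^{-1/6})$ times a bounded expression, which is absorbed into the final bound.

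Next I would isolate the \emph{diagonal contribution} from tuples for which $p_1\cdots p_k q_1\cdots q_k$ is a perfect square. The dominant case is when the primes pair up across the two groups: some bijection $\sigma\in S_k$ identifies $q_i=p_{\sigma(i)}$ for every $i$. For each such pairing, the $p^{\pm it}$ phases cancel and one gets the factor
\[\prod_{i=1}^{k}\frac{(\log p_i)^2|\lambda_{\pi}(p_i)|^2}{p_i^{2}}\cdot\prod_{p\mid n}\frac{p}{p+1}.\]
Summing over $\sigma\in S_k$ (giving $k!$ matchings) and then over the primes $p_i\in[y,z]$, I obtain
\[k!\left(\sum_{y\leq p\leq z}\frac{(\log p)^2|\lambda_{\pi}(p)|^2}{p^2}\right)^k,\]
which is the desired main term (the factor $\prod p/(p+1)\leq 1$ is harmless).

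The remaining work is bookkeeping for the ``degenerate'' diagonal tuples in which some prime appears more than twice among the $p_i$'s and $q_j$'s (i.e.\ the multiset has a prime of multiplicity $\geq 2$ inside a single side, or a prime of multiplicity $\geq 4$ overall). Here the main obstacle is quantifying that these make a strictly smaller contribution; the key observation is that fixing any such repetition loses at least one summation variable, and one can bound each resulting sum by $\sum_p (\log p)^{O(1)}|\lambda_{\pi}(p)|^{O(1)}/p^2$ times powers of the full diagonal sum, giving at most $O(1)\cdot k!\,(\sum \cdots)^k$ by a combinatorial accounting very similar to the proof of moment bounds for random Euler products (compare \cite[Lemma~2.1]{Lf} and \cite[Lemma~5.2]{Hamieh-Mcclenagan}). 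To control the $|\lambda_{\pi}(p^m)|$ factors that arise from coincident primes, I would use the trivial bound $|\lambda_{\pi}(p^m)|\leq d\, p^{m\theta}$ coming from the assumed Satake bound, which is more than enough since the summand already has the $p^{-2}$ denominator when variables collapse. Combining the main diagonal, the collapsed diagonals, and the negligible non-square error yields the stated inequality.
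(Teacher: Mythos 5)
Your plan is the same high-level strategy the paper uses: expand the $2k$-th moment, invoke the P\'olya--Vinogradov-type estimate from \cite[Lemma~4.1]{GS} on the non-square terms (the hypothesis $k\leq\log N/(6\log z)$ gives $z^{2k}\leq N^{1/3}$, so the off-diagonal is negligible), and then bound the ``square'' contribution by a combinatorial count. Where you and the paper part ways is in how that last bound is produced, and this is exactly the step you yourself flag as ``the main obstacle.'' The paper sidesteps the whole matching/degenerate-tuple taxonomy by first packaging the expansion as $(\sum_{y\leq p\leq z}\cdots)^k = \sum_{y^k\leq n\leq z^k} a_{k,y,z}(n)/n^{1+it}$ with $a_{k,y,z}(n)=\binom{k}{\alpha_1,\dots,\alpha_r}\prod_i((\log p_i)\lambda_\pi(p_i)\chi_D(p_i))^{\alpha_i}$, and then, following \cite[Lemma~3]{S}, deducing
\[
\sum_{y^k\leq n\leq z^k}\frac{|a_{k,y,z}(n)|^2}{n^2}\;\leq\; k!\left(\sum_{y\leq p\leq z}\frac{(\log p)^2|\lambda_\pi(p)|^2}{p^2}\right)^k
\]
from the elementary inequality $\binom{k}{\alpha_1,\dots,\alpha_r}^2\leq k!\binom{k}{\alpha_1,\dots,\alpha_r}$. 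This absorbs your ``perfect matchings $\sigma\in S_k$'' main term, all collided-prime degeneracies, and the square-but-not-diagonal configurations $mn=\square$ with $m\neq n$ in a single stroke. In your version, the assertion that the degenerate tuples give at most $O(1)\cdot k!(\cdots)^k$ is left unproved, and it is the part most prone to an error if one tries to do the bookkeeping by hand. I would recommend adopting the $a_{k,y,z}(n)$ formalism: it is not merely a matter of taste, since it converts an unruly multiset-counting problem into a one-line inequality on multinomial coefficients.

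Two smaller remarks. First, your orthogonality estimate is quoted with a cruder error $O(\sqrt{n}\log n / N^{1/2})$ than the $N^{1/2}n^{1/4}(\log n)^{1/2}$ bound of \cite[Lemma~4.1]{GS}; with $n\leq N^{1/3}$ both are more than enough here, so this is cosmetic, but you should cite the bound you actually plan to use. Second, in this lemma the sum runs only over \emph{primes} $p\in[y,z]$, so when prime variables collide you get powers $\lambda_\pi(p)^m$, not $\lambda_\pi(p^m)$; invoking the Satake bound on $|\lambda_\pi(p^m)|$ is a red herring. The only input on $\lambda_\pi$ you need is the trivial $|\lambda_\pi(p)|^2\leq d\sum_j|\alpha_{j,\pi}(p)|^2$ paired with the $p^{-2}$ decay already present in the summand.
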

\begin{proof}
The proof closely follows \cite[Lemma~3]{S}. We have
$$\left(\sum_{y\leq p\leq x} \frac{(\log p)\lambda_\pi(p) \chi_D(p)}{p^{1+it}} \right)^k=\sum_{y^k\leq n \leq z^k}  \frac{a_{k, y, z}(n)}{n^{1+it}}, $$
where
$$a_{k, y, z}(n) =\begin{cases}
{{k}\choose{\alpha_1, \cdots, \alpha_r}} \prod_{i=1}^{r} ((\log p)\lambda_\pi(p)\chi_D(p))^{\alpha_i} & \text{if}~ n=p_1^{\alpha_1}\cdots p_r^{\alpha_r},~p_i's~ \text{distinct},~y\leq p_i \leq z,\\
0&\text{otherwise}.
\end{cases}$$
Thus, we get
\begin{equation}
\label{Polya}
\frac{1}{N} \sum_{D\in \mathcal{F}(N)} \left|\sum_{y\leq p\leq z} \frac{(\log p)\lambda_\pi(p) \chi_D(p)}{p^{1+it}}     \right|^{2k}= \sum_{y^k \leq m, n \leq z^k} \frac{a_{k, y, z}(m) \overline{a_{k, y, z}(n)} }{(mn)^{1+it}}\left( \frac{1}{N} \sum_{D\in \mathcal{F}(N)} \chi_D(mn)\right).
\end{equation}
From \cite[Lemma~4.1]{GS} we know that if $mn$ is not a perfect square, then
\begin{equation}\label{eqn:Polya-Vinog}\sum_{D\in \mathcal{F}(N)} \chi_D(mn) \ll N^{\frac{1}{2}} (mn)^{\frac{1}{4}} (\log{(mn}))^{\frac{1}{2}}.\end{equation}
Applying  this upper bound in \eqref{Polya} yields
\begin{equation}\label{inequality}
\begin{split}
\frac{1}{N} \sum_{D\in \mathcal{F}(N)} \left|\sum_{y\leq p\leq z} \frac{(\log p)\lambda_\pi(p) \chi_D(p)}{p^{1+it}}     \right|^{2k}&\ll \sum_{y^k \leq  n \leq z^k} \frac{|a_{k, y, z}(n)|^2}{n^2}\\&\hspace{1em}+N^{-\frac{1}{2}} \sum_{\substack{y^k \leq m, n \leq z^k\\ mn\neq \square}} \frac{|a_{k, y, z}(m)||a_{k, y, z}(n)|}{(mn)^{\frac{3}{4}}} (\log(mn))^{\frac{1}{2}}.
\end{split}
\end{equation}
Observe that $$2\frac{|a_{k, y, z}(m)||a_{k, y, z}(n)|}{(mn)^{\frac{3}{4}}}\leq \frac{|a_{k, y, z}(m)|^2}{m^{\frac{3}{2}}}+ \frac{|a_{k, y, z}(n)|^2}{n^{\frac{3}{2}}}.$$
By application of this inequality in the last term of \eqref{inequality}, we have
\begin{equation}
\label{inequality-2}
\begin{split}
\frac{1}{N} \sum_{D\in \mathcal{F}(N)} \left|\sum_{y\leq p\leq z} \frac{(\log p)\lambda_\pi(p) \chi_D(p)}{p^{1+it}} \right|^{2k}
&\ll \sum_{y^k \leq  n \leq z^k} \frac{|a_{k, y, z}(n)|^2}{n^2}\\&\hspace{1em}+N^{-\frac{1}{2}} \sum_{y^k \leq n \leq z^k} \frac{|a_{k, y, z}(n)|^2}{n^{\frac{3}{2}}} \sum_{\substack{y^k \leq m \leq z^k\\mn\neq \square}}(\log(mn))^{\frac{1}{2}}.\end{split}
\end{equation}
We deduce from \eqref{inequality-2} that
\begin{equation}
\label{inequality-3}
\begin{split}
\frac{1}{N} \sum_{D\in \mathcal{F}(N)} \left|\sum_{y\leq p\leq z} \frac{(\log p)\lambda_\pi(p) \chi_D(p)}{p^{1+it}}     \right|^{2k}
&\ll \sum_{y^k \leq  n \leq z^k} \frac{|a_{k, y, z}(n)|^2}{n^2}\\&\hspace{1em}+N^{-\frac{1}{2}} z^{\frac{3k}{2}} (\log(z^{2k}))^{\frac{1}{2}} \sum_{y^k \leq n \leq z^k} \frac{|a_{k, y, z}(n)|^2}{n^{{2}}}\\&\ll
\sum_{y^k \leq  n \leq z^k} \frac{|a_{k, y, z}(n)|^2}{n^2}
\end{split}
\end{equation}
since $k\leq (\log{N})/(6\log{z})$. The desired result follows from \eqref{inequality-3} since by an argument similar to the one given in the proof of \cite[Lemma~3]{S} we have
$$\sum_{y^k \leq  n \leq z^k} \frac{|a_{k, y, z}(n)|^2}{n^2}\leq k! \left(\sum_{y\leq p \leq z}  \frac{(\log{p})^2|\lambda_\pi(p)|^2}{p^2}   \right)^k.$$
\end{proof}

\begin{lem}\label{lem3.3}
Let $A\geq1$ be fixed and set $Y=(\log N)^A$. Let $k$ be an integer satisfying $2\leq k\leq \frac{\log N}{6A\log\log N}$. Under the assumption of Hypothesis H, we have 
\begin{equation*}
\frac{1}{N}\sum_{D\in\mathcal{F}(N)}\left|\sum_{n\leq Y}\frac{\Lambda(n)\lambda_{\pi}(n)\chi_{D}(n)}{n^{1+it}}\right|^{2k}\ll\left(C\log k\right)^{2k}
\end{equation*}
for some positive constant $C$ that depends only on $\pi$.
\end{lem}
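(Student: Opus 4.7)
The plan is to decompose the inner sum according to whether $n$ is a small prime, a large prime, or a higher prime power, bound each piece separately, and then recombine via convexity. I write
\[S(D) := \sum_{n\leq Y}\frac{\Lambda(n)\lambda_{\pi}(n)\chi_{D}(n)}{n^{1+it}} = S_a(D) + S_b(D) + S_c(D),\]
where $S_a(D)$ collects the primes $p \leq k$, $S_b(D)$ collects the primes $k < p \leq Y$, and $S_c(D)$ collects the prime powers $p^m$ with $m \geq 2$. Once I establish $\frac{1}{N}\sum_{D\in\mathcal{F}(N)}|S_{*}(D)|^{2k}\ll (C\log k)^{2k}$ for each $*\in\{a,b,c\}$, the convexity inequality $|S(D)|^{2k}\leq 3^{2k-1}\bigl(|S_a(D)|^{2k}+|S_b(D)|^{2k}+|S_c(D)|^{2k}\bigr)$ closes the proof after absorbing the $3^{2k-1}$ factor into $C$.

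For $S_a(D)$ I would use a trivial pointwise bound combined with Lemma~\ref{C-S} to get
\[|S_a(D)|\leq \sum_{p\leq k}\frac{(\log p)|\lambda_{\pi}(p)|}{p}\leq \frac{1}{2}\sum_{p\leq k}\frac{\log p}{p}+\frac{1}{2}\sum_{p\leq k}\frac{(\log p)|\lambda_{\pi}(p)|^{2}}{p}.\]
The first sum is $\log k + O(1)$ by Mertens; for the second I apply Theorem~\ref{thm-PNT} with $\pi'=\pi$ (the tautology $\pi\cong\pi\otimes|\det|^{0}$ places us in the main-term case with $\tau_0=0$) and Abel summation to obtain $\sum_{p\leq k}(\log p)|\lambda_{\pi}(p)|^{2}/p=\log k+O(\log\log k)$. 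Hence $|S_a(D)|\ll\log k$ uniformly in $D$, so its $2k$-th moment is bounded by $(C\log k)^{2k}$. For $S_c(D)$ the Rudnick--Sarnak bound~\eqref{RS-bound} gives $|\lambda_{\pi}(p^m)|\leq d\,p^{m\theta_{0}}$ for some $\theta_{0}<1/2$, and the resulting series $\sum_{m\geq 2}\sum_p (\log p)/p^{m(1-\theta_{0})}$ converges, so $|S_c(D)|\ll_{\pi}1$ uniformly in $D$ and its moment contribution is trivially $O(1)^{2k}$.

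The hard step is controlling $S_b(D)$, and this is precisely where the hypothesis $k\leq \log N/(6A\log\log N)$ is tight. I invoke Lemma~\ref{lem3.2} with $y=k$ and $z=Y=(\log N)^A$; the range of $k$ permitted by Lemma~\ref{lem3.2} matches the hypothesis exactly, and the conclusion is
\[\frac{1}{N}\sum_{D\in\mathcal{F}(N)}|S_b(D)|^{2k}\ll k!\,V_b^{\,k}, \qquad V_b:=\sum_{k<p\leq Y}\frac{(\log p)^{2}|\lambda_{\pi}(p)|^{2}}{p^{2}}.\]
Hypothesis~H now enters through Lemma~\ref{lem:pnt1H}, from which $V_b\leq\sum_{p>k}(\log p)^{2}|\lambda_{\pi}(p)|^{2}/p^{2}\ll \log k/k$; combining this with Stirling's estimate $k!\ll (k/e)^{k}\sqrt{k}$ produces $k!\,V_b^{\,k}\ll \sqrt{k}\,(C\log k/e)^{k}\ll (C'\log k)^{2k}$ after absorbing the lower-order $\sqrt{k}$ into the base. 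The subtle point is the calibration of the splitting at $p=k$: this is precisely the threshold that makes the tail estimate from Lemma~\ref{lem:pnt1H} deliver a variance small enough (of order $\log k/k$) to cancel the factorial $k!$ coming from Lemma~\ref{lem3.2}, while the pointwise bound on the small-prime piece costs only $\log k$.
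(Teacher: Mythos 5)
Your proof is correct and follows essentially the same approach as the paper's: decompose the Dirichlet polynomial into a small-prime piece, a middle range of primes, and prime powers; bound the first and third pointwise; and feed the middle range into the moment estimate of Lemma~\ref{lem3.2}, with the threshold chosen so that the resulting variance is $\asymp \log k / k$ and cancels the $k!$ via Stirling.

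The differences from the paper are cosmetic but worth noting. You split at $p=k$ while the paper splits at $p=k/\log k$; both thresholds deliver a pointwise bound $\ll\log k$ for the low-lying piece and a variance of the right order (yours is in fact slightly cleaner, producing $(C\log k)^{k}$ rather than $(C\log^{2}k)^{k}$ for the middle term, both of which are absorbed into $(C'\log k)^{2k}$). For the variance you invoke Lemma~\ref{lem:pnt1H}, whereas the paper uses Abel summation directly against the bound $A(t)\ll t$ from~\cite{A-S}; these are two ways of writing the same computation. The most interesting deviation is the prime-power piece: you handle it unconditionally via the Rudnick--Sarnak bound~\eqref{RS-bound}, observing that $|\lambda_\pi(p^m)|\le d\,p^{m\theta_0}$ with $\theta_0<\frac12$ makes $\sum_{m\ge2}\sum_p (\log p)p^{-m(1-\theta_0)}$ converge (since $2(1-\theta_0)>1$), whereas the paper leans on Hypothesis~H together with~\cite[Equation~(2)]{A-S}. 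Your route for this piece avoids the conditional input, though Hypothesis~H is still needed elsewhere (your use of Lemma~\ref{lem:pnt1H}, and of Theorem~\ref{thm-PNT} for the small-prime bound when $d>4$), so the overall hypotheses are unchanged.
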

\begin{proof}
We have
 \begin{align*}
 \frac{1}{N}\sum_{D\in\mathcal{F}(N)}\left|\sum_{n\leq Y}\frac{\Lambda(n)\lambda_{\pi}(n)\chi_{D}(n)}{n^{1+it}}\right|^{2k}&=\frac{1}{N}\sum_{D\in\mathcal{F}(N)}\Bigg|\sum_{p\leq \tfrac{k}{\log k}}\frac{(\log p)\lambda_{\pi}(p)\chi_{D}(p)}{p^{1+it}}\\&\hspace{3em}+\sum_{\tfrac{k}{\log k}<p\leq Y}\frac{(\log p)\lambda_{\pi}(p)\chi_{D}(p)}{p^{1+it}}+\sum_{\substack{n\geq2\\ p^n\leq Y}}\frac{(\log p)\lambda_{\pi}(p^n)\chi_{D}(p^n)}{p^{n+int}}\Bigg|^{2k}.\end{align*}
It follows that 
 \begin{align*}
 \frac{1}{N}\sum_{D\in\mathcal{F}(N)}\left|\sum_{n\leq Y}\frac{\Lambda(n)\lambda_{\pi}(n)\chi_{D}(n)}{n^{1+it}}\right|^{2k}&\leq \frac{9^k}{N}\sum_{D\in\mathcal{F}(N)}\left|\sum_{p\leq \tfrac{k}{\log k}}\frac{(\log p)\lambda_{\pi}(p)\chi_{D}(p)}{p^{1+it}}\right|^{2k}\\&\hspace{1em}+ \frac{9^k}{N}\sum_{D\in\mathcal{F}(N)}\left|\sum_{\tfrac{k}{\log k}<p\leq Y}\frac{(\log p)\lambda_{\pi}(p)\chi_{D}(p)}{p^{1+it}}\right|^{2k}\\&\hspace{1em}+ \frac{9^k}{N}\sum_{D\in\mathcal{F}(N)}\left|\sum_{\substack{n\geq2\\ p^n\leq Y}}\frac{(\log p)\lambda_{\pi}(p^n)\chi_{D}(p^n)}{p^{n+int}}\right|^{2k}.
 \end{align*}

By Lemma \ref{lem3.2}, we know that 
\begin{align*}
\frac{1}{N} \sum_{D\in \mathcal{F}(N)} \left|\sum_{\frac{k}{\log k}\leq p\leq Y} \frac{(\log p)\lambda_\pi(p) \chi_D(p)}{p^{1+it}}     \right|^{2k}\ll k! \left(\sum_{\frac{k}{\log k}\leq p \leq Y}  \frac{(\log{p})^2|\lambda_\pi(p)|^2}{p^2}   \right)^k.
\end{align*}
Hence,
\begin{equation}\label{eqn1:lem3.3}
\begin{split}
\frac{1}{N}\sum_{D\in\mathcal{F}(N)}\left|\sum_{n\leq Y}\frac{\Lambda(n)\lambda_{\pi}(n)\chi_{D}(n)}{n}\right|^{2k}&\ll 9^k\left(\sum_{p\leq \frac{k}{\log k}}\frac{(\log p)\left|\lambda_{\pi}(p)\right|}{p}\right)^{2k}\\&\hspace{1em}+9^{k}k! \left(\sum_{\frac{k}{\log k}\leq p \leq Y}  \frac{(\log{p})^2|\lambda_\pi(p)|^2}{p^2}  \right)^k \\&\hspace{1em}+9^k\left(\sum_{\substack{n\geq2\\ p^n\leq Y}}\frac{(\log p)\left|\lambda_{\pi}(p^n)\right|}{p^n}\right)^{2k}.
\end{split}
\end{equation}

We have 
\begin{align*}
\left(\sum_{p\leq \frac{k}{\log k}}\frac{(\log p)\left|\lambda_{\pi}(p)\right|}{p}\right)^{2k}&\leq \left(\sum_{p\leq \frac{k}{\log k}}\frac{\log p}{p}\right)^{k}\left(\sum_{p\leq \frac{k}{\log k}}\frac{(\log p)|\lambda_{\pi}(p)|^2}{p}\right)^k\leq (C_1\log(k/\log k))^{2k},
\end{align*}
where $C_1$ is a positive constant that depends only on $\pi$, and for the last inequality we use \cite[page~150]{A-S} to bound $\sum_{p\leq\frac{k}{\log k}}\frac{(\log p)\left|\lambda_{\pi}(p)\right|^2}{p}$.

Now Abel's summation formula yields
\begin{align*}
 \sum_{\frac{k}{\log k}\leq p \leq Y}  \frac{(\log{p})^2|\lambda_\pi(p)|^2}{p^2}  &=A(Y)\frac{\log Y}{Y^2}-A(k/\log k)\frac{\log(k/\log k)}{(k/\log k)^2}-\int_{\frac{k}{\log k}}^{Y} A(t)\left(\frac{1-2\log t}{t^3}\right)\;dt,
\end{align*}
where $A(t)=\sum_{p\leq t}\log p\left|\lambda_{\pi}(p)\right|^2$. Since $A(t)\ll t$ (see \cite[page~150]{A-S}), with our choices for $k$ and $Y$, we have
\begin{align*}
 \sum_{\frac{k}{\log k}\leq p \leq Y}  \frac{(\log{p})^2|\lambda_\pi(p)|^2}{p^2} \leq C_2\frac{\log(k/\log k)}{k/\log k},
\end{align*}
for some positive constant $C_2$ that depends only on $\pi$. Hence, the second sum on the RHS of \eqref{eqn1:lem3.3} is bounded by 
\[\frac{k!}{k^k}\left(9C_2\log k\log(k/\log k)\right)^{k}\leq \left(9C_2\log^2 k\right)^{k}.\]
For the last sum in \eqref{eqn1:lem3.3}, we use the inequality \begin{equation}\label{eqn:lambda-bound}|\lambda_{\pi}(n)|\leq \frac{1}{2}(1+|\lambda_{\pi}(n)|^2)\end{equation} and \cite[Equation~(2)]{A-S} to get
\begin{align*}\sum_{\substack{n\geq2\\ p^n\leq Y}}\frac{(\log p)\left|\lambda_{\pi}(p^n)\right|}{p^n}&\leq\frac{1}{2}\sum_{\substack{n\geq2\\ p^n\leq Y}}\frac{\log p}{p^n}+\frac{1}{2}\sum_{\substack{n\geq2\\ p^n\leq Y}}\frac{(\log p)\left|\lambda_{\pi}(p^n)\right|^2}{p^n}\\&=O(1).
\end{align*}
In this argument, Hypothesis H is required for the application of \cite[Equation~(2)]{A-S}. 
Combining all these estimates gives the desired result.
\end{proof}
We end this section by providing an upper bound for the second moment of the values $\frac{L'}{L}(1+it,\pi\otimes\chi_D)$ as $D$ varies in the set $\mathcal{A}(N)$ which was introduced after Proposition \ref{prop:short}. This result is used in the proof of Theorem \ref{main} in Section \ref{proof-main}.

\begin{lem}\label{2ndmoment} Assume Hypothesis $H$.  As $N\to\infty$, we have
\[\frac{1}{|\mathcal{A}(N)|}\sum_{D\in\mathcal{A}(N)}\left|\frac{L'}{L}(1+it,\pi\otimes\chi_D)\right|^2\ll_{\pi} 1.\] 
\end{lem}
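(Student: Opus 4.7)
The plan is to apply the short Dirichlet polynomial approximation from Proposition \ref{prop:short} and estimate the resulting second moment by splitting the expanded square into a diagonal and an off-diagonal part, with the off-diagonal controlled by the P\'olya--Vinogradov-type bound \eqref{eqn:Polya-Vinog} already used in the proof of Lemma \ref{lem3.2}. Specifically, I would fix $Y=(\log N)^{\eta_\pi}$ and note that Proposition \ref{prop:short} together with $|a+b|^2\leq 2|a|^2+2|b|^2$ yields, for each $D\in\mathcal{A}(N)$,
\[\left|\frac{L'}{L}(1+it,\pi\otimes\chi_D)\right|^2\leq 2\left|\sum_{n\leq Y}\frac{\Lambda(n)\lambda_\pi(n)\chi_D(n)}{n^{1+it}}\right|^2+O(Y^{-2\delta_\pi}).\]
Since $|\mathcal{E}(N)|=O(N^{3/4})$ gives $|\mathcal{A}(N)|\asymp N$, positivity of the summand allows me to sum instead over all of $\mathcal{F}(N)$, reducing the claim to $M=O_\pi(1)$, where
\[M:=\sum_{m,n\leq Y}\frac{\Lambda(m)\Lambda(n)\lambda_\pi(m)\overline{\lambda_\pi(n)}}{m^{1+it}n^{1-it}}\cdot\frac{1}{N}\sum_{D\in\mathcal{F}(N)}\chi_D(mn).\]

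The next step is to split $M=M_\square+M_\neq$ according to whether $mn$ is a perfect square. Since $m,n$ are prime powers, $mn=\square$ forces $m=p^a$, $n=p^b$ with $a+b$ even. The contribution of $a=b=1$ is $\ll\sum_{p\leq Y}(\log p)^2|\lambda_\pi(p)|^2/p^2$, which is bounded uniformly in $Y$ by Lemma \ref{lem:pnt1H}; this is the place where Hypothesis H is invoked. For $(a,b)\neq(1,1)$ with $a+b$ even we have $a+b\geq 4$, and the Rudnick--Sarnak bound \eqref{RS-bound}, $|\lambda_\pi(p^a)|\ll_d p^{a\theta_0}$ with $\theta_0=\tfrac12-\tfrac{1}{d^2+1}$, gives
\[\sum_p(\log p)^2\sum_{\substack{a+b\geq 4\\a+b\ \mathrm{even}}}\frac{1}{p^{(a+b)(1-\theta_0)}}\ll\sum_p\frac{(\log p)^2}{p^{4(1-\theta_0)}}<\infty,\]
since $4(1-\theta_0)=2+\tfrac{4}{d^2+1}>2$. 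Thus $M_\square=O_\pi(1)$.

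For $M_\neq$, the bound \eqref{eqn:Polya-Vinog}, $\sum_{D\in\mathcal{F}(N)}\chi_D(mn)\ll N^{1/2}(mn)^{1/4}(\log mn)^{1/2}$, combined once more with Rudnick--Sarnak gives
\[|M_\neq|\ll N^{-1/2}(\log Y)^{1/2}\left(\sum_{n\leq Y}\frac{\Lambda(n)|\lambda_\pi(n)|}{n^{3/4}}\right)^2\ll N^{-1/2}(\log Y)^{1/2}Y^{3/2-2/(d^2+1)},\]
which is $o(1)$ for $Y=(\log N)^{\eta_\pi}$. Combining the estimates for $M_\square$, $M_\neq$, and the negligible $O(Y^{-2\delta_\pi})$ approximation error yields the desired $O_\pi(1)$ bound. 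I do not expect any serious obstacle: the only mildly technical point is the higher-prime-power tail of the diagonal $M_\square$, and this is handled cleanly by the Rudnick--Sarnak bound, so Hypothesis H is really only needed for the dominant $a=b=1$ term via Lemma \ref{lem:pnt1H}.
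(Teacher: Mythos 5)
Your overall strategy (short Dirichlet polynomial from Proposition \ref{prop:short}, extend to $\mathcal{F}(N)$ by positivity, expand the square, separate diagonal and off-diagonal, use the P\'olya--Vinogradov bound \eqref{eqn:Polya-Vinog} on the off-diagonal) is sound and close in spirit to the paper's. However, the decomposition of $M_\square$ contains a genuine error. You assert that, since $m$ and $n$ are prime powers, $mn=\square$ forces $m=p^a$, $n=p^b$ with $a+b$ even. This is false: if $m=p^{2a'}$ and $n=q^{2b'}$ with $p\neq q$ and $a',b'\geq 1$, then $mn=(p^{a'}q^{b'})^2$ is a perfect square (e.g.\ $m=4$, $n=9$). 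Your $M_\square$ analysis therefore omits all cross-prime terms of this shape, and the estimate you give for the ``higher'' diagonal --- a single sum over one prime $p$ with exponent $(a+b)(1-\theta_0)$ --- does not cover them. The gap is fixable: the missing contribution is bounded in absolute value by
\[
\Bigl(\sum_{p,\,a'\geq 1}\frac{(\log p)\,|\lambda_\pi(p^{2a'})|}{p^{2a'}}\Bigr)^{2}\ll\Bigl(\sum_p\frac{\log p}{p^{2(1-\theta_0)}}\Bigr)^{2}<\infty
\]
by the same Rudnick--Sarnak bound you invoke, so the conclusion stands, but the case analysis as written is incomplete.

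For comparison, the paper avoids this issue entirely: it first applies the triangle inequality to split the short Dirichlet polynomial into the prime part $\sum_{p\leq Y}$ and the higher-prime-power part $\sum_{p^n\leq Y,\,n\geq 2}$, and bounds the latter by $O_\pi(1)$ uniformly in $D$ using \eqref{eqn:lambda-bound} and Hypothesis H (via \cite[Equation~(3)]{A-S}) before taking any $D$-average. Only the pure prime sum is then squared and averaged, so the diagonal is exactly $m=n=p$, with no higher-prime-power interactions to enumerate. A minor further contrast: the paper bounds the resulting $\sum_{p\leq Y}(\log p)^2|\lambda_\pi(p)|^2/p^2$ by the Rudnick--Sarnak bound \eqref{RS-bound} (no Hypothesis H needed for that step), whereas you invoke Lemma \ref{lem:pnt1H} and hence Hypothesis H there; both work, but the paper reserves Hypothesis H for controlling the higher prime powers.
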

\begin{proof}
Let $D\in\mathcal{A}(N)$. By Proposition \ref{prop:short}, there exist $\delta_{\pi}, \eta_{\pi}>0$ such that 
\begin{equation}\label{eqn:L'/L2}
\left|\frac{L'}{L}(1+it,\pi\otimes\chi_D)\right|^2=\left|\sum_{n\leq Y}\frac{\Lambda(n)\lambda_{\pi}(n)\chi_D}{n^{1+it}} \right|^2+ O\left(Y^{-\delta_{\pi}}\sum_{n\leq Y}\frac{\Lambda(n)\left|\lambda_{\pi}(n)\right|}{n} \right)+O(Y^{-2\delta_{\pi}}),
\end{equation}
provided that $(\log N)^{\eta_{\pi}}\ll Y\ll N^{3d^2}$.
It follows from \eqref{eqn:lambda-bound} and \eqref{eqn:L'/L2} that
\begin{align*}
\left|\frac{L'}{L}(1+it,\pi\otimes\chi_D)\right|^2&=\left|\sum_{p\leq Y}\frac{(\log p)\lambda_{\pi}(p)\chi_D(p)}{p^{1+it}}+\sum_{\substack{p^n\leq Y\\n\geq2}}\frac{(\log p)\lambda_{\pi}(p^n)\chi_D(p^n)}{p^{n+nit}} \right|^2\\&\hspace{2em}+ O\left(Y^{-\delta_{\pi}}\sum_{n\leq Y}\frac{\Lambda(n)}{n} \right)+O\left(Y^{-\delta_{\pi}}\sum_{n\leq Y}\frac{\Lambda(n)\left|\lambda_{\pi}(n)\right|^2}{n} \right)+O(Y^{-2\delta_{\pi}}).\end{align*}
Since $\sum_{n\leq Y}\frac{\Lambda(n)}{n}$ and $\sum_{n\leq Y}\frac{\Lambda(n)\left|\lambda_{\pi}(n)\right|^2}{n}$ are both $O(\log Y)$ by Mertens' bound and \cite[Equation~(3)]{A-S} respectively, we get
\begin{align*}\left|\frac{L'}{L}(1+it,\pi\otimes\chi_D)\right|^2&\ll \left|\sum_{p\leq Y}\frac{(\log p)\lambda_{\pi}(p)\chi_D(p)}{p^{1+it}}\right|^2+\left|\sum_{\substack{p^n\leq Y\\n\geq2}}\frac{(\log p)\lambda_{\pi}(p^n)\chi_D(p^n)}{p^{n+nit}} \right|^2+O(Y^{-\nu_{\pi}}),
\end{align*}
for some $\nu_{\pi}>0$ depending on $\pi$. Assuming Hypothesis H, by \eqref{eqn:lambda-bound} we have
\begin{align*}
\left|\sum_{\substack{p^n\leq Y\\n\geq2}}\frac{(\log p)\lambda_{\pi}(p^n)\chi_D(p^n)}{p^{n+nit}} \right|^2&\ll\left(\sum_{\substack{p^n\leq Y\\n\geq2}}\frac{(\log p)\left|\lambda_{\pi}(p^n)\right|}{p^{n}} \right)^2\\&\ll\left(\sum_{\substack{p^n\leq Y\\n\geq2}}\frac{\log p}{p^{n}} +\sum_{\substack{p^n\leq Y\\n\geq2}}\frac{(\log p)\left|\lambda_{\pi}(p^n)\right|^2}{p^{n}}\right)^2\\&\ll_{\pi} 1.
\end{align*}
Hence, under Hypothesis H we have
\begin{equation*}
\left|\frac{L'}{L}(1+it,\pi\otimes\chi_D)\right|^2\ll  \left|\sum_{p\leq Y}\frac{(\log p)\lambda_{\pi}(p)\chi_D(p)}{p^{1+it}}\right|^2 +O_{\pi}(1).
\end{equation*}

Taking the average over $\mathcal{A}(N)$ gives
\begin{align*}
\frac{1}{|\mathcal{A}(N)|}\sum_{D\in\mathcal{A}(N)}\left|\frac{L'}{L}(1+it,\pi\otimes\chi_D)\right|^2&\ll\frac{1}{|\mathcal{A}(N)|}\sum_{D\in\mathcal{A}(N)}\left|\sum_{p\leq Y}\frac{(\log p)\lambda_{\pi}(p)\chi_D(p)}{p^{1+it}}\right|^2 +O_{\pi}(1)
\\&\ll \sum_{p\leq Y}\frac{(\log p)^2|\lambda_{\pi}(p)|^2}{p^2} +O_{\pi}(1).
\end{align*}
Since, by \eqref{RS-bound}, $|\lambda_{\pi}(p)|\ll p^{\theta}$ with $0\leq\theta<\frac12$, from the above we get
\begin{align*}
\frac{1}{|\mathcal{A}(N)|}\sum_{D\in\mathcal{A}(N)}\left|\frac{L'}{L}(1+it,\pi\otimes\chi_D)\right|^2\ll_{\pi}1\end{align*}
as desired.
\end{proof}

\section{The Random Model}\label{random-model}

Recall the definition of the random model $\mathbb{X}=\{\mathbb{X}_n\}_{n\in\mathbb{N}}$ given by  \[\mathbb{X}_n = \prod_{p \mid n} \mathbb{X}_p ^{\nu_p(n)},\] where $\nu_p(n)$ is the $p$-adic valuation of $n$, and $\{\mathbb{X}_p\}_{p\;\text{prime}}$ is the sequence of independent random variables given by
	\begin{equation}\label{eqn:random-model}\mathbb{P} \big( \mathbb{X}_p = a \big) = \begin{cases}
        \frac{p}{2(p+1)} & \text{if $a= \pm 1$}, \\
        \frac{1}{p+1} & \text{if $a=0$}.
    \end{cases}\end{equation}
The random variables $\mathbb{X}_n$ satisfy
\begin{equation}\label{eq-E(X(n))}
    \mathbb{E} \left[ \mathbb{X}_n \right] = 
    \begin{cases}
        \prod_{p \mid n} \Big( \frac{p}{p+1} \Big) &\text{if $n$ is a square,} \\
        0 &\text{otherwise}.
    \end{cases}
\end{equation}

The random sum $-\ld(1+it,\pi, \mathbb{X})$ given by
\begin{equation}\label{eqn:random-sum1}
-\ld(1+it,\pi, \mathbb{X})=\sum_{n=1}^{\infty}\frac{\Lambda(n)\lambda_{\pi}(n)\mathbb{X}_n}{n^{1+it}}
\end{equation}
can be written as $\sum_{p}\frac{(\log p)\lambda_{\pi}(p)\mathbb{X}_p}{p^{1+it}} +O(1)$, and the latter sum is almost surely convergent by the Menshov-Rademacher Theorem (see for example \cite[Proposition~B.10.5]{kowalski}). 
Moreover, the random sum
\begin{equation}\label{eqn:random-sum2}
\sum_{j=1}^d\sum_{p}\frac{\alpha_{j,\pi}(p)\mathbb{X}_{p}\log p}{p^{1+it}-\alpha_{j,\pi}(p)\mathbb{X}_p}
\end{equation}
is almost surely convergent by Kolmogorov's Theorem (see for example \cite[Proposition~B.10.1]{kowalski}). 
More generally, let $\tau>1-\frac{1}{d^2+1}$, and let $U_{\tau}=\{s\in\mathbb{C};\Re(s)>\tau\}$. It follows from the Menshov-Rademacher theorem that the random series \begin{equation}\label{eqn:random-sum-s-1}\sum_{n=1}^\infty \frac{\Lambda(n)\lambda_{\pi}(n)\mathbb{X}_n}{n^{s}}\end{equation} is almost surely convergent on $U_{\tau}$, and so it defines an almost surely holomorphic function there. We also consider the random series \begin{equation}\label{eqn:random-sum-s-2}\sum_{j=1}^d\sum_{p}\frac{(\log p)\alpha_{j,\pi}(p)\mathbb{X}_{p}}{p^{s}-\alpha_{j,\pi}(p)\mathbb{X}_p}, \end{equation}
which, by Kolmogorov's  theorem, 
is almost surely convergent  on $U_{\tau}$, and so it defines a holomorphic function there. One could easily verify that the series \eqref{eqn:random-sum-s-1} and \eqref{eqn:random-sum-s-2}  are equal for all $s$ with $\Re(s)>1$. By analytic continuation, we see that
\begin{equation*}\label{eqn:two-series}\sum_{n=1}^\infty \frac{\Lambda(n)\lambda_{\pi}(n)\mathbb{X}_n}{n^{s}}=\sum_{j=1}^d\sum_p \frac{(\log p)\alpha_{j,\pi}(p)
 \mathbb{X}_p}{p^{s} -\alpha_{j,\pi}(p)\mathbb{X}_p}\end{equation*}
almost surely in $U_{\tau}$. 
In particular, we have 
\begin{equation*}\label{eqn:L'/L-random}
  -\ld(1+it,\pi,\mathbb{X})=\sum_{n=1}^{\infty}\frac{\Lambda(n)\lambda_{\pi}(n)\mathbb{X}_n}{n^{1+it}}=\sum_{j=1}^d\sum_{p}\frac{(\log p)\alpha_{j,\pi}(p)\mathbb{X}_{p}}{p^{1+it}-\alpha_{j,\pi}(p)\mathbb{X}_p}.
\end{equation*}

In what follows we will be considering the partial random sums 
\[\sum_{n\leq Y}\frac{\Lambda(n)\lambda_{\pi}(n)\mathbb{X}_n}{n^{1+it}}.\]
We need the following lemmas involving sums of the above form.
\begin{lem}\label{lem3.2-random}
Let $2\leq y \leq z$. Then, uniformly for any positive integer $k$ we have
\[\mathbb{E}\left[ \left|\sum_{y\leq p\leq z} \frac{(\log p)\lambda_\pi(p) \mathbb{X}_p}{p^{1+it}}     \right|^{2k}\right]\ll k! \left(\sum_{y\leq p \leq z}  \frac{(\log{p})^2|\lambda_\pi(p)|^2}{p^2}   \right)^k.\]
\end{lem}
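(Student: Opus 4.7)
The plan is to mirror the proof of Lemma \ref{lem3.2} in the random setting; this turns out to be cleaner since $\mathbb{E}[\mathbb{X}_m\mathbb{X}_n]$ vanishes \emph{exactly} when $mn$ is not a perfect square, rather than merely being small as is the case for the character sum $\sum_D \chi_D(mn)$ used there. First, using the multinomial expansion and the identity $\prod_i\mathbb{X}_{p_i}^{\alpha_i}=\mathbb{X}_n$ for $n=p_1^{\alpha_1}\cdots p_r^{\alpha_r}$, I would write
\[\left(\sum_{y\leq p\leq z}\frac{(\log p)\lambda_\pi(p)\mathbb{X}_p}{p^{1+it}}\right)^k=\sum_{y^k\leq n\leq z^k}\frac{a_{k,y,z}(n)\mathbb{X}_n}{n^{1+it}},\]
where $a_{k,y,z}(n)$ is the coefficient from the proof of Lemma \ref{lem3.2}. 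Multiplying by the conjugate expansion and taking expectations gives
\[\mathbb{E}\left[\left|\sum_{y\leq p\leq z}\frac{(\log p)\lambda_\pi(p)\mathbb{X}_p}{p^{1+it}}\right|^{2k}\right]=\sum_{m,n}\frac{a_{k,y,z}(n)\overline{a_{k,y,z}(m)}}{n^{1+it}m^{1-it}}\,\mathbb{E}[\mathbb{X}_m\mathbb{X}_n].\]

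Next, by independence of the $\mathbb{X}_p$ together with $\mathbb{E}[\mathbb{X}_p^j]=0$ for odd $j$ and $\mathbb{E}[\mathbb{X}_p^j]=\tfrac{p}{p+1}$ for even $j\geq 2$, the factor $\mathbb{E}[\mathbb{X}_m\mathbb{X}_n]=\mathbb{E}[\mathbb{X}_{mn}]$ vanishes unless $mn$ is a perfect square, and is bounded in absolute value by $\prod_{p\mid mn}\tfrac{p}{p+1}\leq 1$ otherwise. This step plays the role of the Polya--Vinogradov estimate \eqref{eqn:Polya-Vinog} used in the arithmetic case, but in strictly cleaner form, yielding
\[\mathbb{E}\left[\left|\sum_{y\leq p\leq z}\frac{(\log p)\lambda_\pi(p)\mathbb{X}_p}{p^{1+it}}\right|^{2k}\right]\leq\sum_{\substack{m,n\\ mn=\square}}\frac{|a_{k,y,z}(m)||a_{k,y,z}(n)|}{mn}.\]

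Finally, I would bound the right-hand side, up to an absolute constant raised to the $k$th power (harmlessly absorbed into the implied constant), by the diagonal $\sum_n|a_{k,y,z}(n)|^2/n^2$, using the AM-GM inequality $2|a(m)||a(n)|\leq |a(m)|^2+|a(n)|^2$ together with the symmetry of the sum and a standard Vandermonde-type rearrangement expressing the off-diagonal contribution in terms of the multinomial coefficient $\binom{2k}{2e_1,\ldots,2e_r}$, which is bounded by $4^k\binom{k}{e_1,\ldots,e_r}^2$. The desired estimate then follows exactly as in Lemma \ref{lem3.2} from the further bound $\sum_n|a_{k,y,z}(n)|^2/n^2\leq k!\bigl(\sum_{y\leq p\leq z}(\log p)^2|\lambda_\pi(p)|^2/p^2\bigr)^k$, obtained via $\binom{k}{\alpha_1,\ldots,\alpha_r}^2\leq k!\binom{k}{\alpha_1,\ldots,\alpha_r}$. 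The main subtlety is controlling the $mn=\square$, $m\neq n$ off-diagonal contribution, which is absent in an idealized Gaussian (Isserlis) calculation; here it is dominated by the diagonal via the identities above, though one could alternatively proceed by symmetrization, writing $\mathbb{X}_p=\epsilon_p|\mathbb{X}_p|$ with $\epsilon_p$ an independent Rademacher sign and invoking the Khintchine--Kahane inequality conditionally on $\{|\mathbb{X}_p|\}$.
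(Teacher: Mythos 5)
Your proof is correct and takes essentially the same route as the paper: the paper's proof of Lemma~\ref{lem3.2-random} consists of a single sentence instructing the reader to repeat the proof of Lemma~\ref{lem3.2}, using the exact evaluation \eqref{eq-E(X(n))} of $\mathbb{E}[\mathbb{X}_n]$ in place of the P\'olya--Vinogradov bound \eqref{eqn:Polya-Vinog}, which is precisely what you do.

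One thing worth flagging: you are in fact \emph{more} careful than the paper about the off-diagonal square terms. In the displayed inequality \eqref{inequality} of Lemma~\ref{lem3.2}, the contribution of all pairs $(m,n)$ with $mn=\square$ is replaced silently by the diagonal $\sum_n |a_{k,y,z}(n)|^2/n^2$; passing from one to the other is not free, since pairs such as $m=p^3q$, $n=pq^3$ give $mn=\square$ with $m\neq n$. Your Vandermonde rearrangement
\[
\sum_{\substack{\alpha+\beta=2\gamma\\ |\alpha|=|\beta|=k}}\binom{k}{\alpha}\binom{k}{\beta}=\binom{2k}{2\gamma},
\qquad
\binom{2k}{2\gamma_1,\ldots,2\gamma_r}\le 4^k\binom{k}{\gamma_1,\ldots,\gamma_r}^2,
\]
fills in exactly this step, showing the full $mn=\square$ sum is $\le 4^k\sum_n|a_{k,y,z}(n)|^2/n^2$; equivalently your Khintchine--Kahane symmetrization $\mathbb{X}_p=\epsilon_p|\mathbb{X}_p|$ gives $(2k-1)!!\,S^k\le 2^k k!\,S^k$ directly. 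Either way the constant absorbed is $C^k$ rather than absolute, so the ``$\ll k!(\cdots)^k$ uniformly in $k$'' claimed in the statement should really be read as $\ll k!(C\cdots)^k$; this matches what the paper's own proof delivers and is harmless in Lemma~\ref{lem3.3-random} where a $9^k$ is already present. The AM-GM inequality you mention is a bit of a red herring here---it is what the paper uses for the \emph{non-square} terms in \eqref{inequality-2}, but for the square terms the Vandermonde/Khintchine route is what actually closes the argument.
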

\begin{proof}
The proof of this lemma is similar to the proof of Lemma \ref{lem3.2} where we use \eqref{eq-E(X(n))} in lieu of the Polya-Vinogradov type inequality \eqref{eqn:Polya-Vinog}.
\end{proof}

\begin{lem}\label{lem3.3-random}
Let $A\geq1$ be fixed and set $Y=(\log N)^A$. Let $k\geq2$ be any integer. Under Hypothesis H, we have 
\begin{equation*}
\mathbb{E}\left[\left|\sum_{n\leq Y}\frac{\Lambda(n)\lambda_{\pi}(n)\mathbb{X}_n}{n^{1+it}}\right|^{2k}\right]\ll\left(C\log k\right)^{2k},
\end{equation*}
for some positive constant $C$ that depends only on $\pi$.
\end{lem}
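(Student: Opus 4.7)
The plan is to mirror the proof of Lemma \ref{lem3.3} almost verbatim, with Lemma \ref{lem3.2-random} replacing Lemma \ref{lem3.2}. The key structural gain is that in the random setting the expectation identity \eqref{eq-E(X(n))} is exact (rather than an approximation via \eqref{eqn:Polya-Vinog}), so no Polya--Vinogradov error terms arise; this is precisely why the constraint $k \le \frac{\log N}{6A \log\log N}$ can be dropped.

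First I would split the prime-power support of the Dirichlet polynomial into three ranges: primes $p \le k/\log k$, primes $k/\log k < p \le Y$, and genuine prime powers $p^n$ with $n \ge 2$. Triangle inequality gives, with a factor of $9^k$, an upper bound for the $2k$-th moment of the original sum by the sum of the $2k$-th moments of the three pieces. For the middle range, apply Lemma \ref{lem3.2-random} with $y = k/\log k$ and $z = Y$ to obtain
\[
\mathbb{E}\left[\left|\sum_{k/\log k < p \le Y} \frac{(\log p)\lambda_\pi(p)\mathbb{X}_p}{p^{1+it}}\right|^{2k}\right] \ll k!\left(\sum_{k/\log k < p \le Y} \frac{(\log p)^2|\lambda_\pi(p)|^2}{p^2}\right)^k.
\]
Using the bound $\sum_{p\le t}(\log p)|\lambda_\pi(p)|^2 \ll t$ from \cite[p.~150]{A-S} together with Abel summation, the inner sum is $\ll \log(k/\log k)/(k/\log k)$, so Stirling converts the whole quantity into $(C \log k)^{2k}$.

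For the small-prime and prime-power pieces, I would use the trivial bound $|\mathbb{X}_n| \le 1$ to reduce immediately to the deterministic sums already estimated in the proof of Lemma \ref{lem3.3}. Specifically, Cauchy--Schwarz yields
\[
\left(\sum_{p \le k/\log k} \frac{(\log p)|\lambda_\pi(p)|}{p}\right)^{2k} \le \left(\sum_{p \le k/\log k} \frac{\log p}{p}\right)^{k}\left(\sum_{p \le k/\log k} \frac{(\log p)|\lambda_\pi(p)|^2}{p}\right)^{k} \ll (C_1 \log k)^{2k},
\]
while the prime-power piece is bounded by $\bigl(\sum_{n\ge 2,\, p^n \le Y} (\log p)|\lambda_\pi(p^n)|/p^n\bigr)^{2k}$, which is $O(1)^{2k}$ under Hypothesis H via the inequality \eqref{eqn:lambda-bound} and \cite[Equation~(2)]{A-S}, exactly as in the final lines of the proof of Lemma \ref{lem3.3}.

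There is no real obstacle here; the whole argument is a bookkeeping adaptation. The only subtle point is verifying that Lemma \ref{lem3.2-random} really does hold without any upper bound on $k$, but this is immediate from the proof of Lemma \ref{lem3.2-random}: expanding the $2k$-th power produces diagonal terms controlled by $\mathbb{E}[\mathbb{X}_n^2] \le 1$ and off-diagonal terms annihilated by \eqref{eq-E(X(n))} whenever the product of supports is not a square, with no residual arithmetic error. Combining the three bounds $(C_1 \log k)^{2k}$, $(C_2 \log k)^{2k}$, and $O(1)^{2k}$, and absorbing the factor $9^k$, yields the desired $(C \log k)^{2k}$ with $C$ depending only on $\pi$.
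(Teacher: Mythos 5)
The proposal is correct and takes essentially the same approach as the paper, which for this lemma consists of the one-line remark that the argument mirrors the proof of Lemma~\ref{lem3.3} with Lemma~\ref{lem3.2-random} in place of Lemma~\ref{lem3.2}. You have correctly identified both the bookkeeping (the same three-range split, the $9^k$ factor, the Cauchy--Schwarz and Abel-summation estimates, the prime-power bound via Hypothesis~H) and the key structural point that since \eqref{eq-E(X(n))} is an exact identity rather than an approximation, Lemma~\ref{lem3.2-random} carries no constraint on $k$ and hence none is inherited here.
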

\begin{proof}
This lemma follows an argument similar to the one used in the proof of Lemma \ref{lem3.3}. We use Lemma \ref{lem3.2-random} in lieu of Lemma \ref{lem3.2}.
\end{proof}

 \begin{lem}\label{lem4.1}
Suppose that  $|\lambda_{\pi}(p)|\ll p^{\theta}$ for some $0\leq\theta<\frac12$. Let $0<\epsilon<\frac12-\theta$ be given. Then if $u$ and $v$ are real numbers such that $|u|+|v|\leq Y^{\frac12-(\theta+\epsilon)}$, we have
\begin{align*}
&\mathbb{E}\left[\exp\left(iu\Re\left(-\ld(1+it,\pi, \mathbb{X})\right)+iv\Im\left(-\ld(1+it,\pi, \mathbb{X})\right)\right)\right]\\&=\mathbb{E}\left[\exp\left(iu\Re\left(\sum_{n\leq Y}\frac{\Lambda(n)\lambda_{\pi}(n)\mathbb{X}_n}{n^{1+it}}\right)+iv\Im\left(\sum_{n\leq Y}\frac{\Lambda(n)\lambda_{\pi}(n)\mathbb{X}_n}{n^{1+it}}\right)\right)\right] +O\left(\frac{|u|+|v|}{Y^{\frac12-(\theta+\epsilon)}}\right).
\end{align*}
\end{lem}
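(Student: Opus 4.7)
Let $F = -\ld(1+it,\pi,\mathbb{X})$ and $G = \sum_{n\leq Y}\Lambda(n)\lambda_{\pi}(n)\mathbb{X}_n/n^{1+it}$, so that $T := F - G$ equals the tail $\sum_{n > Y}\Lambda(n)\lambda_{\pi}(n)\mathbb{X}_n/n^{1+it}$ (as an $L^{2}$ series, since $\theta < 1/2$ ensures $\sum_p (\log p)^{2}|\lambda_{\pi}(p)|^{2}/p^{2} < \infty$). Using the elementary Lipschitz bound $|e^{ia}-e^{ib}|\leq |a-b|$ for real $a,b$, I would write
\[
\bigl|e^{iu\Re F + iv\Im F}-e^{iu\Re G + iv\Im G}\bigr|=\bigl|e^{i(u\Re T+v\Im T)}-1\bigr|\cdot \bigl|e^{iu\Re G+iv\Im G}\bigr|
\leq |u|\,|\Re T|+|v|\,|\Im T|\leq (|u|+|v|)|T|.
\]
Taking expectations and applying Cauchy--Schwarz reduces the problem to bounding $\mathbb{E}[|T|^{2}]$:
\[
\bigl|\mathbb{E}[e^{iu\Re F + iv\Im F}]-\mathbb{E}[e^{iu\Re G + iv\Im G}]\bigr|
\leq (|u|+|v|)\,\mathbb{E}[|T|^{2}]^{1/2}.
\]

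\textbf{Tail variance.} Expanding the square, using that the $\mathbb{X}_{n}$ are real, and summing only over prime powers $n=p^{a}$, $m=q^{b}$ with $n,m>Y$, gives
\[
\mathbb{E}[|T|^{2}]=\sum_{\substack{p^{a}>Y\\ q^{b}>Y}}\frac{(\log p)(\log q)\,\lambda_{\pi}(p^{a})\overline{\lambda_{\pi}(q^{b})}}{p^{a(1+it)}q^{b(1-it)}}\,\mathbb{E}\bigl[\mathbb{X}_{p^{a}}\mathbb{X}_{q^{b}}\bigr].
\]
By independence of $\{\mathbb{X}_{p}\}$ and $\mathbb{E}[\mathbb{X}_{p}^{k}]=\tfrac{p}{p+1}$ for $k$ even and $0$ for $k$ odd, the factor $\mathbb{E}[\mathbb{X}_{p^{a}}\mathbb{X}_{q^{b}}]$ vanishes unless either $p=q$ with $a+b$ even, or $p\neq q$ with both $a,b$ even; in any case its modulus is $\leq 1$. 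The dominant contribution is the prime diagonal $n=m=p$ with $p>Y$:
\[
\sum_{p>Y}\frac{(\log p)^{2}|\lambda_{\pi}(p)|^{2}}{p^{2}}\cdot\frac{p}{p+1}\ll \sum_{p>Y}\frac{(\log p)^{2}\,p^{2\theta}}{p^{2}} \ll Y^{2\theta-1}\log Y,
\]
by partial summation with the prime number theorem. All other contributions (prime powers $p^{a},p^{b}$ on the same prime with $(a,b)\neq(1,1)$, and the mixed even--even prime-power terms with $p\neq q$) come with a denominator whose exponent is at least $a+b\geq 3$ or at least $4$, together with the constraint that the relevant primes exceed $Y^{1/a}$ or $Y^{1/b}$; a direct estimate using $|\lambda_{\pi}(p^{k})|\ll p^{k\theta}$ with $\theta<1/2$ shows each such contribution is $O(Y^{2\theta-1+\epsilon})$ as well. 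Hence $\mathbb{E}[|T|^{2}]\ll Y^{2\theta-1+\epsilon}$, so $\mathbb{E}[|T|^{2}]^{1/2}\ll Y^{-(1/2-\theta-\epsilon)}$, and after relabelling $\epsilon$ the claimed bound follows.

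\textbf{Main obstacle.} The technical work is concentrated in the variance calculation; the range condition $|u|+|v|\leq Y^{1/2-(\theta+\epsilon)}$ is natural here, since it just ensures that the resulting error term is $O(1)$, consistent with both characteristic functions being bounded by $1$. The only subtlety is the bookkeeping over all pairs of prime powers, but the non-diagonal and higher prime-power contributions are genuinely smaller than the $p^{2}$ diagonal once one tracks that the condition $n,m>Y$ forces the corresponding prime to be at least $Y^{1/\max(a,b)}$. No input beyond the hypothesis $|\lambda_{\pi}(p)|\ll p^{\theta}$ with $\theta<1/2$ and the prime number theorem is needed; in particular, the stronger restriction $\theta<1/4$ used in Theorem~\ref{main} is not required at this stage.
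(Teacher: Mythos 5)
Your argument is correct, but it is a genuinely different route from the paper's. You reduce everything to the $L^2$ norm of the tail $T=F-G$ via the elementary bound $|e^{iu\Re F+iv\Im F}-e^{iu\Re G+iv\Im G}|\leq (|u|+|v|)|T|$ followed by Cauchy--Schwarz, and then compute $\mathbb{E}[|T|^2]$ directly using the orthogonality of $\{\mathbb{X}_n\}$ on non-square products. The paper instead splits the tail into the prime part ($p>Y$, $m=1$) and the higher prime-power part ($p^m>Y$, $m\geq 2$), bounds the latter pointwise almost surely by $O(Y^{\theta+\epsilon-\frac12})$, and then exploits the independence of $\{\mathbb{X}_p\}_{p>Y}$ to factor the characteristic function over large primes, expanding each factor $\mathbb{E}[\exp(iu\Re(\tfrac{(\log p)\lambda_\pi(p)\mathbb{X}_p}{p^{1+it}}))]=1+O(\tfrac{(\log p)|u|^2}{p^{2-2\theta}})$ (the linear term dies because $\mathbb{E}[\mathbb{X}_p]=0$), and finally sums the logarithms of these factors. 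Both approaches need only $|\lambda_\pi(p)|\ll p^\theta$ with $\theta<\frac12$, and both produce the stated error after relabelling $\epsilon$; your remark that $\theta<\frac14$ plays no role here, and that the range constraint on $|u|+|v|$ merely keeps the error $O(1)$, is accurate. The trade-off is roughly this: your $L^2$ route is shorter and entirely elementary (one Lipschitz bound, one Cauchy--Schwarz, one variance sum), but folds all the prime-power bookkeeping into a single moment computation, whose off-diagonal and higher-prime-power terms you correctly identify as strictly subdominant to the $p^2$ diagonal. The paper's multiplicative route keeps the characteristic function factored over primes, which mirrors the moment calculations used elsewhere in the paper (Lemma \ref{lem3.2-random}, Proposition \ref{prop2.3}) and also makes the use of the constraint $|u|+|v|\leq Y^{\frac12-(\theta+\epsilon)}$ transparent: it guarantees that each per-prime Taylor expansion and the pointwise tail bound stay $o(1)$. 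For this particular lemma the two arguments give bounds of identical strength, so your version is a clean simplification.
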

\begin{proof}

To simplify the exposition, we demonstrate the argument for $\mathbb{E}\left[\exp\left(iu\Re\left(-\ld(1+it,\pi, \mathbb{X})\right)\right)\right]$ rather than $\mathbb{E}\left[\exp\left(iu\Re\left(-\ld(1+it,\pi, \mathbb{X})\right)+iv\Im\left(-\ld(1+it,\pi, \mathbb{X})\right)\right)\right]$, for otherwise the expressions would become quite lengthy. 
We have 
\begin{align*}
&\mathbb{E}\left[\exp\left(iu\Re\left(-\ld(1+it,\pi, \mathbb{X})\right)\right)\right]
\\&=\mathbb{E}\left[\exp\left(iu\Re\left(\sum_{p^m\leq Y}\frac{(\log p)\lambda_{\pi}(p^m)\mathbb{X}_{p^m}}{p^{m(1+it)}}+\sum_{p>Y}\frac{(\log p)\lambda_{\pi}(p)\mathbb{X}_p}{p^{1+it}}+\sum_{\substack{m\geq2\\p^m> Y}}\frac{(\log p)\lambda_{\pi}(p^m)\mathbb{X}_{p^m}}{p^{m(1+it)}}\right)\right)\right].
\end{align*}
 It follows that, for $\epsilon>0$, \begin{align*}
\sum_{\substack{m\geq2\\p^m> Y}}\frac{(\log p)\lambda_{\pi}(p^m)\mathbb{X}_{p^m}}{p^{m(1+it)}}&\ll\sum_{m\geq2}\frac{1}{m}\sum_{p>Y^{\frac{1}{m}}}\frac{1}{p^{m(1-\theta-\epsilon)}}\\&\ll\sum_{m\geq2}\frac{1}{m}\left(\frac{Y^{\frac{1}{m}-(1-\theta-\epsilon)}}{m(1-\theta-\epsilon)-1}\right)\\&\ll Y^{\theta+\epsilon-1}Y^{\frac12}\sum_{m\geq2}\frac{1}{m^2}\ll \frac{Y^{\theta+\epsilon}}{Y^{\frac12}}.
\end{align*}
 Hence,
\begin{equation}
\label{eqn:lem3.3-1}
\begin{split}
&\mathbb{E}\left[\exp\left(iu\Re\left(-\ld(1+it,\pi,\mathbb{X})\right)\right)\right]\\&=\mathbb{E}\left[\exp\left(iu\Re\left(\sum_{p^m\leq Y}\frac{(\log p)\lambda_{\pi}(p^m)\mathbb{X}_{p^m}}{p^{m(1+it)}}+\sum_{p>Y}\frac{(\log p)\lambda_{\pi}(p)\mathbb{X}_p}{p^{1+it}}\right)+O\left(\frac{|u|}{Y^{\frac12-\theta-\epsilon}}\right)\right)\right].
\end{split}
\end{equation}
Now if $|u|\leq Y^{\frac{1}{2}-\theta-\epsilon}$ and $p>Y$, then 
\begin{equation}
\label{eqn:lem3.3-2}
\begin{split}\mathbb{E}\left[\exp\left(iu\Re\left(\frac{(\log p)\lambda_{\pi}(p) \mathbb{X}_p}{p^{1+it}}\right)\right)\right]&=\mathbb{E}\left[1+iu\Re\left(\frac{(\log p)\lambda_{\pi}(p) \mathbb{X}_p}{p^{1+it}}\right)+\sum_{m=2}^{\infty}\frac{(iu)^m\Re^m\left(\frac{(\log p)\lambda_{\pi}(p) \mathbb{X}_p}{p^{1+it}}\right)}{m!}\right]\\&=1+O\left(\frac{(\log p)|u|^2}{p^{2-2\theta}}\right).
\end{split}
\end{equation}
It follows from \eqref{eqn:lem3.3-1} and \eqref{eqn:lem3.3-2}  that
\begin{equation}
\label{eqn:lem3.3-3}
\begin{split}
&\mathbb{E}\left[\exp\left(iu\Re\left(-\ld(1+it,\pi, \mathbb{X})\right)\right)\right]
\\&=\mathbb{E}\left[\exp\left(iu\Re\left(\sum_{p^m\leq Y}\frac{(\log p)\lambda_{\pi}(p^m)\mathbb{X}_{p^m}}{p^{m(1+it)}}\right)+\sum_{p>Y}\log\left(1+O\left(\frac{(\log p)|u|^2}{p^{2-2\theta}}\right)\right)+O\left(\frac{|u|}{Y^{\frac12-\theta-\epsilon}}\right)\right)\right].
\end{split}
\end{equation}
Now since 
\[\sum_{p>Y}\log\left(1+O\left(\frac{(\log p)|u|^2}{p^{2-2\theta}}\right)\right)\ll |u|^2Y^{-1+2\theta+\epsilon}\ll |u|Y^{-\frac12+\theta},\]
 by \eqref{eqn:lem3.3-3}, we get
\begin{equation}\label{eqn:real}
\begin{split}
&\mathbb{E}\left[\exp\left(iu\Re\left(-\ld(1+it,\pi, \mathbb{X})\right)\right)\right]\\&=\mathbb{E}\left[\exp\left(iu\Re\left(\sum_{n\leq Y}\frac{\Lambda(n)\lambda_{\pi}(n)\mathbb{X}_n}{n^{1+it}}\right)+O\left(\frac{|u|}{Y^{\frac12-\theta-\epsilon}}\right)\right)\right]\\&=\mathbb{E}\left[\exp\left(iu\Re\left(\sum_{n\leq Y}\frac{\Lambda(n)\lambda_{\pi}(n)\mathbb{X}_n}{n^{1+it}}\right)\right)\right]+O\left(\frac{|u|}{Y^{\frac12-\theta-\epsilon}}\right).
\end{split}
\end{equation}

One can easily check that the above argument applied to \[\mathbb{E}\left[\exp\left(iu\Re\left(-\ld(1+it,\pi, \mathbb{X})\right)+iv\Im\left(-\ld(1+it,\pi, \mathbb{X})\right)\right)\right]\] yields the desired result.
\end{proof}

\section{Bridging Lemmas}\label{bridging}
In this section, we prove two results which serve a crucial role as a bridge between the arithmetic setting and the probabilistic random setting developed in the previous sections.
\begin{lem}\label{lem3.5}
Let $A\geq1$ be fixed and set $Y=(\log N)^A$. Then for any positive integers $j,\ell$ such that $j+\ell\leq\frac{\log N}{6A\log\log N}$, we have 
\begin{align*}
&\frac{1}{\left|\mathcal{F}(N)\right|}\sum_{D\in\mathcal{F}(N)}\left(\sum_{n\leq Y}\frac{\Lambda(n)\lambda_{\pi}(n)\chi_{D}(n)}{n^{1+it}}\right)^j\left(\sum_{n\leq Y}\frac{\Lambda(n)\overline{\lambda_{\pi}(n)}\chi_{D}(n)}{n^{1-it}}\right)^{\ell}\\&=\mathbb{E}\left[\left(\sum_{n\leq Y}\frac{\Lambda(n)\lambda_{\pi}(n)\mathbb{X}_n}{n^{1+it}}\right)^j\left(\sum_{n\leq Y}\frac{\Lambda(n)\overline{\lambda_{\pi}(n)}\mathbb{X}_n}{n^{1-it}}\right)^{\ell}\right] +O\left(\frac{(Y^{\frac14}\log Y)^{j+\ell}(\log N)^{\frac12}}{N^{\frac12}}\right).
\end{align*}
\end{lem}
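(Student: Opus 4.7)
The plan is to expand both products in the LHS and RHS as multiple sums over tuples $(n_1,\ldots,n_{j+\ell})$ with each $n_i\leq Y$, and to exploit the parallel multiplicative identities $\chi_D(n_1)\cdots\chi_D(n_{j+\ell}) = \chi_D(n_1\cdots n_{j+\ell})$ and $\mathbb{X}_{n_1}\cdots\mathbb{X}_{n_{j+\ell}} = \mathbb{X}_{n_1\cdots n_{j+\ell}}$. This collapses each summand to a single $\chi_D(n)$ or $\mathbb{X}_n$ with $n := n_1 n_2\cdots n_{j+\ell}$. Setting
\begin{equation*}
c(\vec{n}) := \prod_{i=1}^{j}\frac{\Lambda(n_i)\lambda_\pi(n_i)}{n_i^{1+it}} \prod_{i=j+1}^{j+\ell}\frac{\Lambda(n_i)\overline{\lambda_\pi(n_i)}}{n_i^{1-it}}, \qquad \Delta(n) := \frac{1}{|\mathcal{F}(N)|}\sum_{D\in\mathcal{F}(N)}\chi_D(n) - \mathbb{E}[\mathbb{X}_n],
\end{equation*}
the difference (LHS $-$ RHS) equals $\sum_{\vec{n}} c(\vec{n})\,\Delta(n_1\cdots n_{j+\ell})$.

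Next I would split the outer sum according to whether $n$ is a perfect square. For non-square $n$, \eqref{eq-E(X(n))} gives $\mathbb{E}[\mathbb{X}_n]=0$ and the P\'olya--Vinogradov type bound \eqref{eqn:Polya-Vinog} yields $|\Delta(n)| \ll n^{1/4}(\log n)^{1/2}/N^{1/2}$. For square $n$, $\chi_D(n) = \mathbf{1}_{(D,n)=1}$ and $\mathbb{E}[\mathbb{X}_n]= \prod_{p\mid n}\frac{p}{p+1}$, so a standard sieve count for fundamental discriminants coprime to $n$ gives $|\Delta(n)| \ll \tau(n)/N^{1/2}$; this is the key reason we chose the random model to have $\mathbb{P}(\mathbb{X}_p = 0) = 1/(p+1)$ in the first place.

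The non-square case is dominant. Using $n\leq Y^{j+\ell}$ with $j+\ell\leq \frac{\log N}{6A\log\log N}$, we have $\log n\ll \log N$, and factoring $n^{1/4} = \prod_i n_i^{1/4}$, the non-square contribution is
\begin{equation*}
\ll \frac{(\log N)^{1/2}}{N^{1/2}}\left(\sum_{n\leq Y}\frac{\Lambda(n)|\lambda_\pi(n)|}{n^{3/4}}\right)^{j+\ell}.
\end{equation*}
To estimate the inner Dirichlet polynomial as $\ll Y^{1/4}\log Y$, I would apply Lemma \ref{C-S} to replace $|\lambda_\pi(n)|$ by $\tfrac{1}{2}(1+\lambda_{\pi\times\tilde\pi}(n))$, invoke the prime number theorem for $\pi\times\tilde\pi$ from Theorem \ref{thm-PNT}, and conclude by Abel summation. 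This produces exactly the stated error. The square-case contribution is bounded crudely by $(\log Y)^{O(j+\ell)}/N^{1/2}$ using $\tau(n_i)\ll \log Y$ for prime powers $n_i\leq Y$, and is easily absorbed.

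The main obstacle will be routing around the weak Ramanujan bound: since $|\alpha_{j,\pi}(p)|\leq p^\theta$ with $\theta$ possibly close to $\tfrac{1}{4}$, a direct pointwise estimate of $|\lambda_\pi(n)|$ would inflate the Dirichlet polynomial above to a size $Y^{(j+\ell)(1/4+\theta)}$, spoiling the error term. Passing through the Rankin--Selberg $L$-function $L(s,\pi\times\tilde\pi)$, whose prime coefficients satisfy a Chebyshev-type estimate on average via Theorem \ref{thm-PNT}, is therefore essential. A secondary technical point is verifying the sieve estimate for square $n$ carefully enough to confirm that its contribution is dominated by the non-square case.
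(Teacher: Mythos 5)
Your proposal is correct and follows essentially the same route as the paper: expand the multiple sum, split according to whether the product $n_1\cdots n_{j+\ell}$ is a perfect square, match the square contribution to $\mathbb{E}[\mathbb{X}_n]$ via the fundamental-discriminant count \eqref{charactersum-squares}, and control the non-square contribution with the P\'olya--Vinogradov type bound \eqref{eqn:Polya-Vinog}. The only divergence is in the final estimate of $\sum_{n\leq Y}\Lambda(n)|\lambda_\pi(n)|/n^{3/4}$: the paper applies Cauchy--Schwarz to split the weight and then invokes $\sum_{n\le Y}\Lambda(n)|\lambda_\pi(n)|^2/n\ll\log Y$, whereas you pass through $|\lambda_\pi(n)|\le\tfrac12(1+\lambda_{\pi\times\tilde\pi}(n))$ and Abel-sum against $\sum_{n\le u}\Lambda(n)\lambda_{\pi\times\tilde\pi}(n)\ll u$; these are two faces of the same averaged bound (note that Theorem \ref{thm-PNT} only controls the prime sum, so for the $\psi$-type bound on $\pi\times\tilde\pi$ you should appeal to Lemma \ref{Sieve} or Hypothesis H to handle the higher prime powers, just as the paper implicitly does via the citation to \cite{A-S}).
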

\begin{proof}
We have 
\begin{equation}\label{eqn:jl}
\begin{split}
&\frac{1}{\left|\mathcal{F}(N)\right|}\sum_{D\in\mathcal{F}(N)}\left(\sum_{n\leq Y}\frac{\Lambda(n)\lambda_{\pi}(n)\chi_{D}(n)}{n^{1+it}}\right)^j\left(\sum_{n\leq Y}\frac{\Lambda(n)\overline{\lambda_{\pi}(n)}\chi_{D}(n)}{n^{1-it}}\right)^{\ell}\\&=\sum_{\substack{p_1^{m_1},\cdots, p_j^{m_j}\leq Y\\q_1^{n_1},\cdots, q_\ell^{n_\ell}\leq Y}}\frac{\left(\log p_1\right)\cdots\left(\log p_{j}\right)\left(\log q_1\right)\cdots\left(\log q_\ell\right)\lambda_{\pi}(p_{1}^{m_1})\cdots\lambda_{\pi}(p_j^{m_j})\overline{\lambda_{\pi}(q_1^{n_1})}\cdots\overline{\lambda_{\pi}(q_\ell^{n_\ell})}}{p_1^{m_1(1+it)}\cdots p_j^{m_j(1+it)}q_1^{n_1(1-it)}\cdots q_\ell^{n_\ell(1-it)}}\\&\hspace{5em}\times\frac{1}{\left|\mathcal{F}(N)\right|}\sum_{D\in\mathcal{F}(N)}\chi_{D}\left(p_1^{m_1}\cdots p_{j}^{m_j}q_1^{n_1}\cdots q_{\ell}^{n_\ell}\right).
\end{split}
\end{equation}

Note that   
\begin{equation}\label{charactersum-squares}\sum_{D \in \mathcal{F}(N)} \chi_D(m^2) =  \sum_{\substack{D \in \mathcal{F}(N) \\ (D,m) = 1}} 1 = \frac{6}{\pi^2} N \prod_{p | m} \bigg( \frac{p}{p+1} \bigg) + O \big( N^{\frac12} \tau(m) \big),\end{equation}
where $\tau(m)$ is the divisor function (see for example \cite[page~1017]{GS}). In view of \eqref{eq-E(X(n))} and \eqref{charactersum-squares}, we see that \eqref{eqn:jl} equals 
\begin{align*}&=\mathbb{E}\left[\left(\sum_{n\leq Y}\frac{\Lambda(n)\lambda_{\pi}(n)\mathbb{X}_n}{n^{1+it}}\right)^j\left(\sum_{n\leq Y}\frac{\Lambda(n)\overline{\lambda_{\pi}(n)}\mathbb{X}_n}{n^{1-it}}\right)^{\ell}\right] \\&\hspace{2em}+\sum_{\substack{p_1^{m_1},\cdots, p_j^{m_j}\leq Y\\q_1^{n_1},\cdots, q_\ell^{n_\ell}\leq Y\\p_1^{m_1}\cdots p_{j}^{m_j}q_1^{n_1}\cdots q_\ell^{n_\ell}\neq\square}}\frac{\prod_{s=1}^{j}\left(\log p_s\right)\lambda_{\pi}(p_{s}^{m_s})\prod_{s=1}^{\ell}\left(\log q_s\right)\overline{\lambda_{\pi}(q_s^{n_s})}}{p_1^{m_1(1+it)}\cdots p_j^{m_j(1+it)}q_1^{n_1(1-it)}\cdots q_\ell^{n_\ell(1-it)}}\\&\hspace{8em}\times\frac{1}{\left|\mathcal{F}(N)\right|}\sum_{D\in\mathcal{F}(N)}\chi_{D}\left(p_1^{m_1}\cdots p_{j}^{m_j}q_1^{n_1}\cdots q_{\ell}^{n_\ell}\right)+O\left(N^{-\frac12}\right).
\end{align*}

Let us now analyze the contribution of non-squares. By \eqref{eqn:Polya-Vinog}, we get 
\begin{equation}\label{eqn:jl1}
\begin{split}
&\sum_{\substack{p_1^{m_1},\cdots, p_j^{m_j}\leq Y\\q_1^{n_1},\cdots, q_k^{n_\ell}\leq Y\\p_1^{m_1}\cdots p_{j}^{m_j}q_1^{n_1}\cdots q_\ell^{n_\ell}\neq\square}}\frac{\prod_{s=1}^{j}\left(\log p_s\right)\lambda_{\pi}(p_{s}^{m_s})\prod_{s=1}^{\ell}\left(\log q_s\right)\overline{\lambda_{\pi}(q_s^{n_s})}}{p_1^{m_1(1+it)}\cdots p_j^{m_j(1+it)}q_1^{n_1(1-it)}\cdots q_\ell^{n_\ell(1-it)}}\\&\hspace{5em}\times\frac{1}{\left|\mathcal{F}(N)\right|}\sum_{D\in\mathcal{F}(N)}\chi_{D}\left(p_1^{m_1}\cdots p_{j}^{m_j}q_1^{n_1}\cdots q_{\ell}^{n_\ell}\right)\\&\ll \frac{((j+\ell)\log Y)^\frac{1}{2}}{N^{\frac12}}\sum_{\substack{p_1^{m_1}\cdots p_j^{m_j}\leq Y\\q_1^{n_1}\cdots q_\ell^{n_\ell}\leq Y\\p_1^{m_1}\cdots p_{j}^{m_j}q_1^{n_1}\cdots q_\ell^{n_\ell}\neq\square}}\frac{\prod_{s=1}^{j}\left(\log p_s\right)\left|\lambda_{\pi}(p_{s}^{m_s})\right|\prod_{s=1}^{\ell}\left(\log q_s\right)\left|\lambda_{\pi}(q_s^{n_s})\right|}{p_1^{\frac34m_1}\cdots p_j^{\frac34m_j}q_1^{\frac34n_1}\cdots q_\ell^{\frac34n_\ell}}.
\end{split}
\end{equation}

Observe that \eqref{eqn:jl1} is \begin{align*}&\ll\frac{((j+\ell)\log Y)^\frac{1}{2}}{N^{\frac12}}\left(\sum_{p^m\leq Y}\frac{\log p\left|\lambda_{\pi}(p^m)\right|}{p^{\frac34m}}\right)^{j+\ell}\\&\ll\frac{((j+\ell)\log Y)^\frac{1}{2}}{N^{\frac12}}\left(\sum_{p^m\leq Y}\frac{\log p\left|\lambda_{\pi}(p^m)\right|^2}{p^{m}}\right)^{\frac{j+\ell}{2}}\left(\sum_{p^m\leq Y}\frac{\log p}{p^{\frac{m}{2}}}\right)^{\frac{j+\ell}{2}}.\end{align*}
Since $\sum_{p^m\leq Y}\frac{\log p\left|\lambda_{\pi}(p^m)\right|^2}{p^{m}}=O(\log Y)$ (see \cite[Equation~(3)]{A-S}) and $\sum_{p^m\leq Y}\frac{\log p}{p^{\frac{m}{2}}}=O(Y^{\frac12}\log Y)$, we conclude that  \eqref{eqn:jl1} is
\begin{align*}&\ll (Y^{\frac14}\log Y)^{j+\ell}\frac{((j+\ell)\log Y)^\frac{1}{2}}{N^{\frac12}}\\&\ll \frac{(Y^{\frac14}\log Y)^{j+\ell}(\log N)^{\frac12}}{N^{\frac12}}.
\end{align*}
\end{proof}
\begin{prop}\label{prop2.3}
Let $A\geq1$ and $Y=(\log N)^{A}$. Assume Hypothesis H. Then there exist positive constants $b_{0}=b_0(A)$ and $c_0=c_0(A)$ such that for all complex numbers $z_1, z_2$ with $|z_1|,|z_2|\leq c_0\frac{\log N}{(\log\log N)^2}$, we have 
\begin{align*}
&\frac{1}{\left|\mathcal{F}(N)\right|}\sum_{D\in\mathcal{F}(N)}\exp\left(z_1\sum_{n\leq Y}\frac{\Lambda(n)\lambda_{\pi}(n)\chi_{D}(n)}{n^{1+it}}+z_2\sum_{n\leq Y}\frac{\Lambda(n)\overline{\lambda_{\pi}(n)}\chi_{D}(n)}{n^{1-it}}\right)\\&=\mathbb{E}\left[\exp\left(z_{1}\sum_{n\leq Y}\frac{\Lambda(n)\lambda_{\pi}(n)\mathbb{X}_n}{n^{1+it}}+z_2\sum_{n\leq Y}\frac{\Lambda(n)\overline{\lambda_{\pi}(n)}\mathbb{X}_n}{n^{1-it}}\right)\right]+O\left(\exp\left(-b_0\frac{\log N}{\log\log N}\right)\right).
\end{align*}
\end{prop}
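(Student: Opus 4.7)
The strategy is to Taylor-expand both exponentials and compare term by term up to a threshold $K$, then bound the two tails separately via moment estimates. Set
\[P_D:=\sum_{n\leq Y}\frac{\Lambda(n)\lambda_\pi(n)\chi_D(n)}{n^{1+it}},\qquad Q_D:=\overline{P_D}=\sum_{n\leq Y}\frac{\Lambda(n)\overline{\lambda_\pi(n)}\chi_D(n)}{n^{1-it}},\]
and let $P_{\mathbb{X}}$, $Q_{\mathbb{X}}=\overline{P_{\mathbb{X}}}$ be the corresponding random partial sums (the conjugation identities are legitimate since $\chi_D$ and $\mathbb{X}_n$ are real-valued). Take $K:=\lfloor\log N/(6A\log\log N)\rfloor$, exactly the range in which Lemma \ref{lem3.5} is effective. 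Expanding gives
\[\exp(z_1P_D+z_2Q_D)=\sum_{j,\ell\geq 0}\frac{z_1^j z_2^\ell}{j!\,\ell!}P_D^j Q_D^\ell,\]
and likewise for the random side, so the quantity to estimate decomposes as a double sum that we split at $j+\ell=K$.

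For the main range $j+\ell\leq K$, Lemma \ref{lem3.5} matches each mixed moment with its random counterpart up to an error of size $(Y^{1/4}\log Y)^{j+\ell}(\log N)^{1/2}N^{-1/2}$. Weighting by $|z_1|^j|z_2|^\ell/(j!\,\ell!)$, using the identity $\sum_{j+\ell=m}|z_1|^j|z_2|^\ell/(j!\,\ell!)=(|z_1|+|z_2|)^m/m!$ together with Stirling's inequality $m!\geq(m/e)^m$, the cumulative error is bounded by $(\log N)^{1/2}N^{-1/2}\sum_{m\leq K}(eW/m)^m$, where $W:=(|z_1|+|z_2|)Y^{1/4}\log Y\ll(\log N)^{1+A/4}/\log\log N$. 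Since $W\gg K$, the summand is increasing on $[0,K]$, and a direct computation gives $(eW/K)^K\ll\exp(\log N/24+O(\log N/\log\log N))$. Multiplying by $N^{-1/2}$ leaves a main-range contribution of $\exp(-(11/24)\log N+o(\log N))$, which is far smaller than the target.

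For the tail $j+\ell>K$, the key observation is that $|P_D^j Q_D^\ell|=|P_D|^{j+\ell}$ and similarly on the random side. Both $|P_D|$ and $|P_{\mathbb{X}}|$ are pointwise bounded by $\sum_{n\leq Y}\Lambda(n)|\lambda_\pi(n)|/n$, which by Cauchy--Schwarz applied to the prime sum, together with Theorem \ref{thm-PNT} and Hypothesis H to absorb composite prime powers, is $O(\log Y)=O(\log\log N)$. Combining this sup bound with the binomial theorem and Stirling reduces the arithmetic tail to $\sum_{m>K}(ec_1(|z_1|+|z_2|)\log\log N/m)^m$ for some constant $c_1=c_1(\pi)$. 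For $m>K$ the ratio inside parentheses is at most $12eAc_0c_1$, so choosing $c_0=c_0(A)$ small enough forces this ratio to be $\leq 1/2$, whereupon the tail telescopes to a geometric series of first term $\ll 2^{-K}\ll\exp(-b_0\log N/\log\log N)$ with $b_0=(\log 2)/(7A)$. The random tail is controlled by exactly the same argument (alternatively using Lemma \ref{lem3.3-random} for slightly tighter constants). The principal technical point is the tight interplay between the admissible range of $|z_1|,|z_2|$ and the range of validity of Lemma \ref{lem3.5}, both tied to the common scale $K$; widening either requires either a stronger Pólya--Vinogradov-type input in Lemma \ref{lem3.5} or a longer Euler product approximation, which is exactly the bottleneck flagged after the proposition's statement.
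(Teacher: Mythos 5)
Your proposal is correct and follows essentially the same route as the paper: Taylor-expand the exponentials, truncate at a threshold $\asymp \log N/(A\log\log N)$, invoke Lemma \ref{lem3.5} to match mixed moments in the main range, and bound the two tails and the cumulative Lemma \ref{lem3.5} error by Stirling plus a pointwise/moment estimate on the Dirichlet polynomial. The only cosmetic deviation is that you control the random-model tail by the pointwise bound $\lvert\mathbb{X}_n\rvert\leq 1$ rather than by the moment bound of Lemma \ref{lem3.3-random} (which is what the paper uses, together with Cauchy--Schwarz); both give the same $e^{-cK}$ estimate.
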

\begin{proof}
Let $k=\max(|z_1|, |z_2|)$ and $M=\left[\frac{\log N}{c\log\log N}\right]$, where $c$ is a suitably large positive constant. We have
\begin{equation}\label{eqn:prop4.2-1}\begin{split}
&\frac{1}{\left|\mathcal{F}(N)\right|}\sum_{D\in\mathcal{F}(N)}\exp\left(z_1\sum_{n\leq Y}\frac{\Lambda(n)\lambda_{\pi}(n)\chi_{D}(n)}{n^{1+it}}+z_2\sum_{n\leq Y}\frac{\Lambda(n)\overline{\lambda_{\pi}(n)}\chi_{D}(n)}{n^{1-it}}\right)\\&=\sum_{j+\ell\leq M}\frac{z_1^jz_2^\ell}{j!\ell!}\frac{1}{\left|\mathcal{F}(N)\right|}\sum_{D\in\mathcal{F}(N)}\left(\sum_{n\leq Y}\frac{\Lambda(n)\lambda_{\pi}(n)\chi_{D}(n)}{n^{1+it}}\right)^j\left(\sum_{n\leq Y}\frac{\Lambda(n)\overline{\lambda_{\pi}(n)}\chi_{D}(n)}{n^{1-it}}\right)^\ell +E_1,
\end{split}\end{equation}
where \begin{align*}
E_1&= \sum_{j+\ell>M}\frac{z_1^jz_2^\ell}{j!\ell!}\frac{1}{\left|\mathcal{F}(N)\right|}\sum_{D\in\mathcal{F}(N)}\left(\sum_{n\leq Y}\frac{\Lambda(n)\lambda_{\pi}(n)\chi_{D}(n)}{n^{1+it}}\right)^j\left(\sum_{n\leq Y}\frac{\Lambda(n)\overline{\lambda_{\pi}(n)}\chi_{D}(n)}{n^{1-it}}\right)^\ell.\end{align*}

Observe that for $Y=(\log N)^A$ we have
\begin{align*}
\left|\sum_{n\leq Y}\frac{\Lambda(n)\lambda_{\pi}(n)\chi_{D}(n)}{n^{1+it}}\right|&\leq \sum_{n\leq Y}\frac{\Lambda(n)\left|\lambda_{\pi}(n)\right|}{n}\\&\leq\left(\frac12 \sum_{n\leq Y}\frac{\Lambda(n)}{n}+\frac12 \sum_{n\leq Y}\frac{\Lambda(n)\left|\lambda_{\pi}(n)\right|^2}{n}\right)\\&\leq C_0\log\log N,
\end{align*}
where $C_0$ is a positive constant that depends only on A and $\pi$.
Hence,
\begin{align*}
E_1&\ll\sum_{j+\ell> M}\frac{k^{j+\ell}}{j!\ell!}\left(C_0\log\log N\right)^{j+\ell}\\&\leq\sum_{n>M}\frac{(2C_0k\log\log N)^n}{n!},\end{align*} where we used the identity $\sum_{j=0}^n\binom{n}{j}=2^n$. By Stirling's inequality $\frac{1}{n!}\leq \left(\frac{e}{n}\right)^n$ , we get
\begin{align*}E_1&\leq \sum_{n>M}\left(\frac{2C_0ek\log\log N}{n}\right)^{n}.\end{align*}
Assuming $k\leq c_0\frac{\log N}{(\log \log N)^2}$, where we choose $c_0\leq \frac{e^{-1}}{6cC_0}$, we deduce that
\begin{align}
\label{eqn:prop4.2-2}
E_1\ll\sum_{n>M}(6cc_0C_0)^n\ll e^{-M}.
\end{align}

To analyze the main term of \eqref{eqn:prop4.2-1}, note that, by Lemma \ref{lem3.5}, we have
\begin{align*}
&\frac{1}{\left|\mathcal{F}(N)\right|}\sum_{D\in\mathcal{F}(N)}\left(\sum_{n\leq Y}\frac{\Lambda(n)\lambda_{\pi}(n)\chi_{D}(n)}{n^{1+it}}\right)^j\left(\sum_{n\leq Y}\frac{\Lambda(n)\overline{\lambda_{\pi}(n)}\chi_{D}(n)}{n^{1-it}}\right)^\ell\\&= \mathbb{E}\left[\left(\sum_{n\leq Y}\frac{\Lambda(n)\lambda_{\pi}(n)\mathbb{X}_n}{n^{1+it}}\right)^j\left(\sum_{n\leq Y}\frac{\Lambda(n)\overline{\lambda_{\pi}(n)}\mathbb{X}_n}{n^{1-it}}\right)^{\ell}\right] +O\left(\frac{(Y^{\frac14}\log Y)^{j+\ell}(\log N)^{\frac12}}{N^{\frac12}}\right)
\end{align*}
for $j+\ell\leq M$. Hence, we have 
\begin{align*}
&\frac{1}{\left|\mathcal{F}(N)\right|}\sum_{D\in\mathcal{F}(N)}\exp\left(z_1\sum_{n\leq Y}\frac{\Lambda(n)\lambda_{\pi}(n)\chi_{D}(n)}{n^{1+it}}+z_2\sum_{n\leq Y}\frac{\Lambda(n)\overline{\lambda_{\pi}(n)}\chi_{D}(n)}{n^{1-it}}\right)\\&=\sum_{j+\ell\leq M}\frac{z_1^jz_2^\ell}{j!\ell!} \mathbb{E}\left[\left(\sum_{n\leq Y}\frac{\Lambda(n)\lambda_{\pi}(n)\mathbb{X}_n}{n^{1+it}}\right)^j\left(\sum_{n\leq Y}\frac{\Lambda(n)\overline{\lambda_{\pi}(n)}\mathbb{X}_n}{n^{1-it}}\right)^{\ell}\right] \\&\hspace{1em}+O\left(\sum_{j+\ell\leq M}\frac{z_1^jz_2^\ell}{j!\ell!}\frac{(Y^{\frac14}\log Y)^{j+\ell}(\log N)^{\frac12}}{N^{\frac12}}\right).
\end{align*}
Observe that 
\begin{align*}
\sum_{j+\ell\leq M}\frac{z_1^jz_2^\ell}{j!\ell!}\frac{(Y^{\frac14}\log Y)^{j+\ell}(\log N)^{\frac12}}{N^{\frac12}}&\leq\frac{(Y^{\frac14}\log Y)^{M}(\log N)^{\frac12}}{N^{\frac12}}\sum_{n\leq M}\frac{(2k)^{n}}{n!}\\&\ll Y^{-\epsilon M}\exp(2k)\\&\ll \exp\left(-\frac{\epsilon A}{c}\log N+2c_0\frac{\log N}{(\log\log N)^2}\right),
\end{align*}
provided that $N^{\frac12}\gg Y^{(1+\epsilon)M}$ which is possible by choosing $c$ suitably large. It follows that 
\begin{equation}\label{eqn:truncated-main}
\begin{split}
&\frac{1}{\left|\mathcal{F}(N)\right|}\sum_{D\in\mathcal{F}(N)}\exp\left(z_1\sum_{n\leq Y}\frac{\Lambda(n)\lambda_{\pi}(n)\chi_{D}(n)}{n^{1+it}}+z_2\sum_{n\leq Y}\frac{\Lambda(n)\overline{\lambda_{\pi}(n)}\chi_{D}(n)}{n^{1-it}}\right)\\&=\sum_{j+\ell\leq M}\frac{z_1^jz_2^\ell}{j!\ell!} \mathbb{E}\left[\left(\sum_{n\leq Y}\frac{\Lambda(n)\lambda_{\pi}(n)\mathbb{X}_n}{n^{1+it}}\right)^j\left(\sum_{n\leq Y}\frac{\Lambda(n)\overline{\lambda_{\pi}(n)}\mathbb{X}_n}{n^{1-it}}\right)^{\ell}\right]\\&\hspace{1em}+O\left(\exp\left(-\frac{\epsilon A}{c}\log N+2c_0\frac{\log N}{(\log\log N)^2}\right)\right).
\end{split}
\end{equation}

Now the main term in \eqref{eqn:truncated-main} can be written as 
\begin{equation}\label{eqn:prop4.2-3}
\begin{split}
&\sum_{j+\ell\leq M}\frac{z_1^jz_2^\ell}{j!\ell!} \mathbb{E}\left[\left(\sum_{n\leq Y}\frac{\Lambda(n)\lambda_{\pi}(n)\mathbb{X}_n}{n^{1+it}}\right)^j\left(\sum_{n\leq Y}\frac{\Lambda(n)\overline{\lambda_{\pi}(n)}\mathbb{X}_n}{n^{1-it}}\right)^{\ell}\right]\\&=
 \mathbb{E}\left[\exp\left(z_1\left(\sum_{n\leq Y}\frac{\Lambda(n)\lambda_{\pi}(n)\mathbb{X}_n}{n^{1+it}}\right)+z_2\left(\sum_{n\leq Y}\frac{\Lambda(n)\overline{\lambda_{\pi}(n)}\mathbb{X}_n}{n^{1-it}}\right)\right)\right] +E_2,
 \end{split}
 \end{equation}
 where \begin{align*}E_2&=-\sum_{j+\ell> M}\frac{z_1^jz_2^\ell}{j!\ell!} \mathbb{E}\left[\left(\sum_{n\leq Y}\frac{\Lambda(n)\lambda_{\pi}(n)\mathbb{X}_n}{n^{1+it}}\right)^j\left(\sum_{n\leq Y}\frac{\Lambda(n)\overline{\lambda_{\pi}(n)}\mathbb{X}_n}{n^{1-it}}\right)^{\ell}\right].\end{align*}
 Observe that \begin{align*}E_2&\ll\sum_{j+\ell> M}\frac{k^{j+\ell}}{j!\ell!} \mathbb{E}\left[\left|\sum_{n\leq Y}\frac{\Lambda(n)\lambda_{\pi}(n)\mathbb{X}_n}{n^{1+it}}\right|^{j+\ell}\right]\\&\ll\sum_{j+\ell> M}\frac{k^{j+\ell}}{j!\ell!} \left(\mathbb{E}\left[\left|\sum_{n\leq Y}\frac{\Lambda(n)\lambda_{\pi}(n)\mathbb{X}_n}{n^{1+it}}\right|^{2(j+\ell)}\right]\right)^{\frac12}.
 \end{align*}
Using Lemma \ref{lem3.3-random} we get \[E_2\ll \sum_{m> M}\frac{(2k)^{m}}{m!} \left(C\log m\right)^{m}.\] 
By Stirling's formula and our choices for $k$ and $M$, we deduce that  \begin{equation}\label{eqn:prop4.2-4}E_2\ll \sum_{m>M}\left(\frac{6Ck\log m}{m}\right)^m\ll\sum_{m>M}\left(\frac{6Cc_0\log N\log M}{M(\log\log N)^2}\right)^m\ll \sum_{m>M}(6Ccc_0)^m\ll e^{-M},\end{equation} where the last inequality follows from choosing $c_0\leq \frac{e^{-1}}{6cC}$. 

From \eqref{eqn:prop4.2-1}, \eqref{eqn:prop4.2-2}, \eqref{eqn:truncated-main}, \eqref{eqn:prop4.2-3}, and \eqref{eqn:prop4.2-4} we obtain the desired result.
 \end{proof}

\section{The Characteristic Function}\label{char-function}

We set \[\Phi_{\mathcal{F}(N)}(u,v):=\frac{1}{\left|\mathcal{F}(N)\right|}\sum_{D\in\mathcal{F}(N)}\exp\left(iu\Re\left(-\frac{L'}{L}(1+it,\pi\otimes\chi_{D})\right)+iv\Im\left(-\frac{L'}{L}(1+it,\pi\otimes\chi_{D})\right)\right)\] and
\[\Phi_{\mathcal{A}(N)}(u,v):=\frac{1}{\left|\mathcal{A}(N)\right|}\sum_{D\in\mathcal{A}(N)}\exp\left(iu\Re\left(-\frac{L'}{L}(1+it,\pi\otimes\chi_{D})\right)+iv\Im\left(-\frac{L'}{L}(1+it,\pi\otimes\chi_{D})\right)\right),\] where $\mathcal{A}(N)$ is the set of fundamental discriminants introduced after Proposition \ref{prop:short}.
We also define \[\Phi_{\mathrm{rand}}(u,v):=\mathbb{E}\left[\exp\left(iu\Re\left(-\ld(1+it,\pi, \mathbb{X})\right)+iv\Im\left(-\ld(1+it,\pi, \mathbb{X})\right)\right)\right].\]

\begin{thrm}
\label{FourierTransform}
Let $A\geq 1$ be fixed. Under the assumption of the Hypothesis H, there exists a positive constant $c_0=c_0(A)$  such that for $|u|,|v|\leq c_0\frac{\log N}{(\log\log N)^2}$, we have 
\begin{align}\label{eqn:arith-prob}
\Phi_{\mathcal{F}(N)}(u,v)
&=\Phi_{\mathrm{rand}}(u,v)
+  O\left(\frac{1}{(\log\log{N})^2(\log N)^{A}}\right).
\end{align}
Moreover, the asymptotic formula \eqref{eqn:arith-prob} holds if we replace $\Phi_{\mathcal{F}(N)}(u,v)$ by $\Phi_{\mathcal{A}(N)}(u,v)$.
\end{thrm}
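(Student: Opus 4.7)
The plan is to chain together the main preparatory tools: the short Dirichlet polynomial approximation from Proposition \ref{prop:short}, the arithmetic-to-random bridge of Proposition \ref{prop2.3}, and the truncation estimate for the random characteristic function in Lemma \ref{lem4.1}. Throughout I fix $Y = (\log N)^B$ for a large parameter $B = B(A)$ to be chosen at the end (at least $B \geq \eta_\pi$ so that Proposition \ref{prop:short} applies, and $B \geq 1$ for Proposition \ref{prop2.3}).

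Writing $P_D := \sum_{n\leq Y}\Lambda(n)\lambda_\pi(n)\chi_D(n)/n^{1+it}$, Proposition \ref{prop:short} yields $-\frac{L'}{L}(1+it, \pi\otimes\chi_D) = P_D + O(Y^{-\delta_\pi})$ for every $D \in \mathcal{A}(N)$, whence the elementary inequality $|e^{i\alpha}-1| \leq |\alpha|$ gives
\begin{equation*}
\exp\bigl(iu\Re(-\tfrac{L'}{L}) + iv\Im(-\tfrac{L'}{L})\bigr) = \exp(iu\Re P_D + iv\Im P_D) + O\bigl((|u|+|v|)Y^{-\delta_\pi}\bigr).
\end{equation*}
Averaging over $D \in \mathcal{A}(N)$ and using $|\mathcal{E}(N)| \ll N^{3/4}$ against $|\mathcal{F}(N)| \asymp N$, I can pass between the $\mathcal{F}(N)$- and $\mathcal{A}(N)$-averages, as well as between the $L'/L$- and $P_D$-averages, at a total cost of $O(N^{-1/4}) + O((|u|+|v|)Y^{-\delta_\pi})$. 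This reduces both versions of the theorem to comparing $|\mathcal{F}(N)|^{-1}\sum_{D\in\mathcal{F}(N)}\exp(iu\Re P_D + iv\Im P_D)$ with $\Phi_{\mathrm{rand}}(u,v)$.

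Next, since $\chi_D$ is real, $\overline{P_D} = \sum_{n\leq Y}\Lambda(n)\overline{\lambda_\pi(n)}\chi_D(n)/n^{1-it}$, and the identity $iu\Re P_D + iv\Im P_D = z_1 P_D + z_2\overline{P_D}$ holds with $z_1 = (iu+v)/2$ and $z_2 = (iu-v)/2$. As $|z_1|,|z_2| \leq (|u|+|v|)/2$, declaring the $c_0$ in the theorem to be at most half the constant supplied by Proposition \ref{prop2.3} (applied with parameter $B$) puts us in its admissible range, so that
\begin{equation*}
\frac{1}{|\mathcal{F}(N)|}\sum_{D\in\mathcal{F}(N)} \exp(z_1 P_D + z_2 \overline{P_D}) = \mathbb{E}\bigl[\exp(z_1 P_\mathbb{X} + z_2 \overline{P_\mathbb{X}})\bigr] + O\bigl(e^{-b_0\log N/\log\log N}\bigr),
\end{equation*}
where $P_\mathbb{X} := \sum_{n\leq Y}\Lambda(n)\lambda_\pi(n)\mathbb{X}_n/n^{1+it}$. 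Reversing the change of variables puts the right-hand side in the form $\mathbb{E}[\exp(iu\Re P_\mathbb{X} + iv\Im P_\mathbb{X})]$, and Lemma \ref{lem4.1} identifies this with $\Phi_{\mathrm{rand}}(u,v) + O((|u|+|v|)Y^{-(1/2-\theta-\epsilon)})$; its hypothesis $|u|+|v| \leq Y^{1/2-\theta-\epsilon}$ is automatic once $B$ is large enough.

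Collecting the errors produces
\begin{equation*}
O\bigl((|u|+|v|)(Y^{-\delta_\pi} + Y^{-(1/2-\theta-\epsilon)})\bigr) + O(N^{-1/4}) + O\bigl(e^{-b_0\log N/\log\log N}\bigr).
\end{equation*}
With $|u|+|v| \ll \log N/(\log\log N)^2$ and $Y = (\log N)^B$, the leading term is $\ll (\log N)^{1 - B\kappa}/(\log\log N)^2$, where $\kappa := \min(\delta_\pi,\,1/2-\theta-\epsilon) > 0$. Choosing $B \geq (A+1)/\kappa$, compatible with the earlier lower bounds on $B$, reduces this to $O((\log N)^{-A}/(\log\log N)^2)$, which dominates the remaining errors; the assertion for $\Phi_{\mathcal{A}(N)}$ follows from the same chain since the two averages differ by $O(N^{-1/4})$. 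No individual step is hard; the delicate bookkeeping is tracking the $(|u|+|v|)$-dependence through each approximation and selecting $B$ large enough to absorb the polylogarithmic growth of the range of $u, v$.
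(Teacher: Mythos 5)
Your proof is correct and follows essentially the same route as the paper's: apply the short-Dirichlet-polynomial approximation (Proposition~\ref{prop:short}), reduce via the bridging Proposition~\ref{prop2.3} with the change of variables $iu\Re P + iv\Im P = z_1 P + z_2\overline P$, then complete the random tail with Lemma~\ref{lem4.1}, choosing the exponent of $Y$ large enough in terms of $A$, $\delta_\pi$, and $\tfrac12-\theta-\epsilon$ so that all power-saving errors sit below $(\log\log N)^{-2}(\log N)^{-A}$. Your $z_1 = (iu+v)/2$, $z_2 = (iu-v)/2$ is actually the correct resolution of the identity (the paper's stated $z_1 = \tfrac{i}{2}(u+iv)$, $z_2 = \tfrac{i}{2}(u-iv)$ has the roles swapped), though this is an immaterial sign convention that does not affect the argument.
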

\begin{proof}
From Proposition \ref{prop:short} we know that for all but $O(N^{\frac34})$ elements $D\in\mathcal{F}(N)$, there are positive constants $\delta_\pi$ and  $\eta_\pi$ such that for $(\log N)^{\eta_\pi}\ll Y\ll N^{3d^2}$, we have
\begin{equation}\label{eqn:dirichlet-poly}
-\frac{L'}{L}(1+it,\pi\otimes\chi_{D})=\sum_{n\leq Y}\frac{\Lambda(n)\lambda_{\pi}(n)\chi_D(n)}{n^{1+it}} +O(Y^{-\delta_\pi}).
\end{equation}
 Recall that $\mathcal{A}(N)$ is the set of elements for which \eqref{eqn:dirichlet-poly} holds. Let $0\leq\theta<\frac12$ be such that $|\lambda_{\pi}(p)|\ll p^{\theta}$ (such $\theta$ exists because of \eqref{RS-bound}). Let $0<\epsilon<\frac{1}{2}-\theta$, and choose  $0<\delta<\min(\frac12-\theta-\epsilon,\delta_{\pi})$ such that $\frac{A+1}{\delta}\geq \eta_\pi$, and set $Y=(\log N)^{\frac{A+1}{\delta}}$. Applying \eqref{eqn:dirichlet-poly} and the inequality $|e^{ia}-e^{ib}|\leq |b-a|$, we get
\begin{equation}\label{eqn:thm5.1-1}\begin{split}
\Phi_{\mathcal{F}(N)}(u,v)
&=\frac{1}{\left|\mathcal{F}(N)\right|}\sum_{D\in\mathcal{A}(N)}\exp\left(iu\Re\left(-\frac{L'}{L}(1+it,\pi\otimes\chi_{D})\right)+iv\Im\left(-\frac{L'}{L}(1+it,\pi\otimes\chi_{D})\right)\right)\\&\hspace{2em}+O(N^{-\frac14})\\&=\frac{1}{\left|\mathcal{F}(N)\right|}\sum_{D\in\mathcal{A}(N)}\exp\left(iu\Re\left(\sum_{n\leq Y}\frac{\Lambda(n)\lambda_{\pi}(n)\chi_D(n)}{n^{1+it}}\right)+iv\Im\left(\sum_{n\leq Y}\frac{\Lambda(n)\lambda_{\pi}(n)\chi_D(n)}{n^{1+it}}\right)\right)\\&\hspace{2em}+O\left((|u|+|v|)Y^{-\delta}\right)+O(N^{-\frac14}).
\end{split}\end{equation}
Notice that \[\exp\left(O\left((|u|+|v|)\right)Y^{-\delta}\right)
=1+O\left((|u|+|v|)(\log N)^{-(A+1)}\right)\] whenever $|u|+|v|<(\log N)^{A+1}$. If we assume further that $|u|+|v|\leq c_0\frac{\log N}{(\log\log N)^2}$ for some $c_0>0$, from \eqref{eqn:thm5.1-1}, we get 
\begin{align*}
\Phi_{\mathcal{F}(N)}(u,v)&=
\frac{1}{\left|\mathcal{F}(N)\right|}\sum_{D\in\mathcal{F}(N)}\exp\left(iu\Re\left(\sum_{n\leq Y}\frac{\Lambda(n)\lambda_{\pi}(n)\chi_D(n)}{n^{1+it}}\right)+iv\Im\left(\sum_{n\leq Y}\frac{\Lambda(n)\lambda_{\pi}(n)\chi_D(n)}{n^{1+it}}\right)\right)\\&\hspace{2em}+O\left(\frac{1}{(\log\log N)^2(\log N)^{A}}\right).
\end{align*}
Here we used the fact that $\left|\mathcal{F}(N)\setminus\mathcal{A}(N)\right|=O(N^{\frac34})$ by Proposition \ref{prop:short}. 

Next, since $|u|,|v|\leq c_0\frac{\log N}{(\log\log N)^2}$, Proposition \ref{prop2.3} with $z_1=\frac{i}{2}(u+iv)$ and $z_2=\frac{i}{2}(u-iv)$ gives
\begin{equation}\label{eqn:phi-1}
\begin{split}
\Phi_{\mathcal{F}(N)}(u,v)&= \mathbb{E}\left[\exp\left(z_{1}\sum_{n\leq Y}\frac{\Lambda(n)\lambda_{\pi}(n)\mathbb{X}_n}{n^{1+it}}+z_2\sum_{n\leq Y}\frac{\Lambda(n)\overline{\lambda_{\pi}(n)}\mathbb{X}_n}{n^{1-it}}\right)\right]\\&\hspace{2em}+O\left(\frac{1}{(\log\log N)^2(\log N)^{A}}\right)+O\left(\exp\left(-b_0\frac{\log N}{\log\log N}\right)\right)
\end{split}
\end{equation}
for some $b_0>0$. 
Now employing Lemma \ref{lem4.1} in \eqref{eqn:phi-1} yields 
\begin{align*}
\Phi_{\mathcal{F}(N)}(u,v)&=\mathbb{E}\left[\exp\left(iu\Re\left(-\ld(1+it,\pi,\mathbb{X})\right)+iv\Im\left(-\ld(1+it,\pi, \mathbb{X})\right)\right)\right]\\&\hspace{2em}+ O\left(\frac{1}{(\log\log N)^2(\log N)^{\frac{A+1}{\delta}(\frac12-\theta-\epsilon)-1}}\right)+ O\left(\frac{1}{(\log\log N)^2(\log N)^{A}}\right).
\end{align*}
The desired error term is achieved since we assume that $\delta<\frac12-\theta-\epsilon$. 

The same asymptotic formula holds for $\Phi_{\mathcal{A}(N)}(u,v)$ since $\Phi_{\mathcal{F}(N)}(u,v)=\Phi_{\mathcal{A}(N)}(u,v)+O(N^{-\frac14}).$
\end{proof}

\section{The Exponential Decay}\label{decay}

In this section we prove a decay bound for $\Phi_{\mathrm{rand}}(u,v)$. Recall that we are working under the assumption that the distributions are 2-dimensional, hence, $\pi\not\cong\tilde{\pi}\otimes\left|\mathrm{det}\right|^{2it}$. 
The following proposition implies among other things that the distribution function associated with our random series admits a smooth density function. 

\begin{prop}\label{exponential-decay} 
Let $z=u+iv$ and assume that $\left|\lambda_{\pi}(p)\right|\ll p^{\theta}$ for some $0\leq\theta<\frac14$. Let $0<\epsilon<2-8\theta$ be given. Then there exist positive constants $C_{\epsilon}$ and $a_{\epsilon}$ depending only on $\epsilon$ such that for $|z|$ large enough we have
\[\Phi_{\mathrm{rand}}(u,v)\ll \exp\left(-C_{\epsilon}|z|^{a_{\epsilon}}\right).\]

\end{prop}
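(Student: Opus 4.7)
The plan is to factor $\Phi_{\mathrm{rand}}(u,v)$ as a product over primes using independence of the $\{\mathbb{X}_p\}$, bound each local factor via a cosine inequality adapted to $\mathbb{X}_p\in\{0,\pm 1\}$, and then evaluate the resulting prime second-moment sum using the two tail estimates from Section~\ref{arithemtic-model}. Writing $z=u+iv$, the identity $iu\Re(w)+iv\Im(w)=i\Re(\bar z\,w)$ gives
\[\Phi_{\mathrm{rand}}(u,v)=\mathbb{E}\bigl[\exp\bigl(i\Re(\bar z\cdot(-\ld(1+it,\pi,\mathbb{X})))\bigr)\bigr].\]
By the Euler-factor identity displayed just before the statement, $-\ld(1+it,\pi,\mathbb{X})=\sum_p g_p(\mathbb{X}_p)$ with $g_p(x)=\sum_{j=1}^{d}(\log p)\alpha_{j,\pi}(p)x/(p^{1+it}-\alpha_{j,\pi}(p)x)$ depending only on $\mathbb{X}_p$, so independence yields
\[\Phi_{\mathrm{rand}}(u,v)=\prod_{p}\varphi_p(z),\qquad \varphi_p(z):=\mathbb{E}\bigl[\exp\bigl(i\Re(\bar z\,g_p(\mathbb{X}_p))\bigr)\bigr].\]

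Setting $a_p=(\log p)\lambda_\pi(p)/p^{1+it}$, a geometric-series expansion of $g_p$ using $|\alpha_{j,\pi}(p)/p^{1+it}|\leq p^{\theta-1}<1$ yields $g_p(\pm 1)=\pm a_p+O((\log p)p^{2\theta-2})$ uniformly. Combining this with the trinomial law of $\mathbb{X}_p$ gives
\[|\varphi_p(z)|\leq 1-\frac{p}{p+1}\bigl(1-\cos(\Re(\bar z\,a_p))\bigr)+O\bigl(|z|(\log p)p^{2\theta-2}\bigr).\]
I then choose a cutoff $X\asymp(|z|\log|z|)^{1/(1-\theta)}$ with implicit constant depending on $\pi$ so that $|\bar z\,a_p|\leq\pi/2$ for every $p\geq X$. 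The factors for $p<X$ contribute at most $1$ in modulus, and on the range $p\geq X$ the elementary inequality $1-\cos\psi\geq 2\psi^2/\pi^2$ applies; taking logarithms and summing produces
\[\log|\Phi_{\mathrm{rand}}(u,v)|\leq -c_0\!\sum_{p\geq X}\Re(\bar z\,a_p)^2+O\bigl(|z|X^{2\theta-1}\log X\bigr).\]

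The main sum is evaluated via $\Re(\bar z\,a_p)^2=\tfrac{1}{4}(\bar z^2 a_p^2+2|z|^2|a_p|^2+z^2\overline{a_p}^{\,2})$, giving
\[\sum_{p\geq X}\Re(\bar z\,a_p)^2=\frac{|z|^2}{2}\!\sum_{p>X}|a_p|^2+\frac{1}{2}\Re\Bigl(\bar z^{\,2}\!\sum_{p>X}a_p^2\Bigr).\]
Lemma~\ref{lem:pnt1H} furnishes $\sum_{p>X}|a_p|^2=(\log X)/X+O(1/X)$; crucially, the standing hypothesis $\pi\not\cong\tilde{\pi}\otimes|\mathrm{det}|^{2it}$ places us in the setting of Lemma~\ref{lem:pnt2H}(ii) and gives $|\sum_{p>X}a_p^2|\ll(\log X)^{\alpha}/X$ for some $\alpha<1$. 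Thus the cross term is strictly lower order, and
\[\sum_{p\geq X}\Re(\bar z\,a_p)^2\;\gg\; |z|^2\frac{\log X}{X}\;\asymp\; |z|^{(1-2\theta)/(1-\theta)}(\log|z|)^{-\theta/(1-\theta)}.\]
Since $\theta<1/4<1/3$, the error term $|z|X^{2\theta-1}\log X\ll |z|^{\theta/(1-\theta)+o(1)}$ is of strictly smaller order, so the bound $\Phi_{\mathrm{rand}}(u,v)\ll\exp(-C_\epsilon|z|^{a_\epsilon})$ holds for any $a_\epsilon$ strictly below $(1-2\theta)/(1-\theta)$, a quantity that remains bounded away from zero on the full range $\theta<1/4$ guaranteed by $\epsilon<2-8\theta$.

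The main obstacle is the uniform bookkeeping of the composite prime-power corrections $g_p(\mathbb{X}_p)-a_p\mathbb{X}_p$: these have to be shown to contribute only a strictly sub-leading additive perturbation to the exponent across the entire range $p\geq X$, and the comparison of exponents $\theta/(1-\theta)<(1-2\theta)/(1-\theta)$ is what ultimately forces the hypothesis $\theta<1/4$ once all losses are accounted for. The structural reason the argument succeeds is the non-alignment hypothesis $\pi\not\cong\tilde{\pi}\otimes|\mathrm{det}|^{2it}$: it converts the oscillatory sum $\sum a_p^2$ from a possible competitor of the Gaussian main term $|z|^2\sum|a_p|^2$ into genuine lower-order noise via Lemma~\ref{lem:pnt2H}(ii). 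The aligned (one-dimensional) case would require a different treatment and is precisely the situation excluded at the start of the section.
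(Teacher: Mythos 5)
Your overall scheme tracks the paper's quite closely: factor $\Phi_{\mathrm{rand}}$ over primes by independence, discard factors with $p\leq X$, approximate each factor by its leading term $a_p=(\log p)\lambda_\pi(p)/p^{1+it}$, and bound the resulting product via the prime second-moment sum computed from Lemma~\ref{lem:pnt1H} and Lemma~\ref{lem:pnt2H}. Your cosine inequality $1-\cos\psi\geq 2\psi^2/\pi^2$ in place of the paper's Taylor expansion of $\log M_p$ is a fine cosmetic variant, and your different cutoff $X\asymp(|z|\log|z|)^{1/(1-\theta)}$ also works since $\theta<1/4<1/3$. The trouble lies in one specific, load-bearing claim.

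You write that ``the standing hypothesis $\pi\not\cong\tilde{\pi}\otimes|\mathrm{det}|^{2it}$ places us in the setting of Lemma~\ref{lem:pnt2H}(ii)'' and hence that $\sum_{p>X}a_p^2$ is genuinely lower order. This is not correct. Lemma~\ref{lem:pnt2H}(ii) requires $\pi\not\cong\tilde{\pi}\otimes|\mathrm{det}|^{i\tau}$ for \emph{every} $\tau\in\mathbb{R}$, whereas the two-dimensionality hypothesis only excludes $\tau=2t$. It is entirely possible that $\pi\cong\tilde{\pi}\otimes|\mathrm{det}|^{i\tau_0}$ for some $\tau_0\neq 2t$ (e.g.\ $\pi$ self-dual and $t\neq 0$, so $\tau_0=0$). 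In that regime Lemma~\ref{lem:pnt2H}(i) applies and $\sum_{p>X}a_p^2$ is of size $\log X/X$, the same order as $\sum_{p>X}|a_p|^2$ — it is a genuine competitor, not lower-order noise. Your argument as written silently assumes away this case, so the key lower bound $\sum_{p\geq X}\Re(\bar z\,a_p)^2\gg|z|^2\log X/X$ is not established in general.

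The paper closes this gap by carrying out the computation in both regimes. When $\pi\cong\tilde{\pi}\otimes|\mathrm{det}|^{i\tau_0}$, the sum evaluates to
\[
\frac{1}{2}|z|^2\frac{\log X}{X}\left(1+\Re\Bigl(\frac{\bar z}{z}\cdot\frac{X^{2it-i\tau_0}}{1-2it+i\tau_0}\Bigr)\right)+O\Bigl(\frac{|z|^2}{X}\Bigr),
\]
and the point is that $\bigl|\tfrac{X^{2it-i\tau_0}}{1-2it+i\tau_0}\bigr|=|1+i(\tau_0-2t)|^{-1}<1$ precisely because $\tau_0\neq 2t$, so the parenthesized factor is bounded away from $0$ uniformly in $z$ and $X$. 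This is where the two-dimensionality hypothesis actually enters — not by diverting to Lemma~\ref{lem:pnt2H}(ii), but by preventing perfect cancellation inside Lemma~\ref{lem:pnt2H}(i). Your proposal needs this second case added to be complete.
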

\begin{proof}

By employing \eqref{eqn:random-model}, we get that 
\[\Phi_{\mathrm{rand}}(u,v)=\prod_{p}M_{p}(u,v),\] where

\begin{align*}
M_{p}(u,v)&=\frac{1}{p+1}+\frac{p}{2(p+1)}\exp\left(i\left(\log p\right)\Re\left(\sum_{j=1}^d\frac{\overline{z}\alpha_{j,\pi}(p)}{p^{1+it}-\alpha_{j,\pi}(p)}\right)\right)\\&\hspace{2em}+\frac{p}{2(p+1)}\exp\left(-i\left(\log p\right)\Re\left(\sum_{j=1}^d\frac{\overline{z}\alpha_{j,\pi}(p)}{p^{1+it}+\alpha_{j,\pi}(p)}\right)\right).\end{align*}
We observe that $\left| \prod_{p}M_{p}(u,v)\right|\leq \prod_{p>X}\left|M_p(u,v)\right|$ since $|M_{p}(u,v)|\leq1$ for all $p$. Moreover,
\begin{equation*}\label{eqn:approx}
\frac{\alpha_{j,\pi}(p)}{p^{1+it}-\alpha_{j,\pi}(p)}-\frac{\alpha_{j,\pi}(p)}{p^{1+it}}=\frac{\alpha^2_{j,\pi}(p)}{p^{1+it}(p^{1+it}-\alpha_{j,\pi}(p))}=O\left(\frac{|\alpha_{j,\pi}(p)|^2}{p^2}\right).
\end{equation*}
Hence,
\begin{align*}
M_{p}(u,v)&=\frac{1}{p+1}+\frac{p}{2(p+1)}\exp\left(i\left(\log p\right)\Re\left(\sum_{j=1}^d\frac{\overline{z}\alpha_{j,\pi}(p)}{p^{1+it}}\right)\right)\\&\hspace{2em}+\frac{p}{2(p+1)}\exp\left(-i\left(\log p\right)\Re\left(\sum_{j=1}^d\frac{\overline{z}\alpha_{j,\pi}(p)}{p^{1+it}}\right)\right) +O\left(\frac{(\log p)|z|}{p^2}\sum_{j=1}^d|\alpha_{j,\pi}(p)|^2\right).
\end{align*}

Next by using the Taylor expansion of the exponential function and simplifying the resulting expressions,  from the above we get
\begin{align*}
M_{p}(u,v)&=1-\frac{p}{2(p+1)}(\log p)^2\Re^2\left(\sum_{j=1}^d\frac{\overline{z}\alpha_{j,\pi}(p)}{p^{1+it}}\right)\\&\hspace{2em}+O\left(\frac{|z|^4(\log p)^4}{p^4}\sum_{j=1}^d|\alpha_{j,\pi}(p)|^4+\frac{(\log p)|z|}{p^2}\sum_{j=1}^d|\alpha_{j,\pi}(p)|^2\right),\end{align*}
provided that $\left(\log p\right)\left|\Re\left(\sum_{j=1}^d\frac{\overline{z}\alpha_{j,\pi}(p)}{p^{1+it}}\right)\right|<1$. Hence, for sufficiently large $X$, we have 
\begin{align*}
\prod_{p>X}M_p(u,v)&=\prod_{p>X}\exp\Bigg(\log\Bigg(1-\frac{p}{2(p+1)}(\log p)^2\Re^2\left(\sum_{j=1}^d\frac{\overline{z}\alpha_{j,\pi}(p)}{p^{1+it}}\right)\\&\hspace{2em}+O\left(\frac{|z|^4(\log p)^4}{p^4}\sum_{j=1}^d|\alpha_{j,\pi}(p)|^4+\frac{(\log p)|z|}{p^2}\sum_{j=1}^d|\alpha_{j,\pi}(p)|^2\right)\Bigg)\Bigg)
\\&\leq\exp\Bigg(-\sum_{p>X}\frac{p}{2(p+1)}(\log p)^2\Re^2\left(\frac{\overline{z}a_{\pi}(p)}{p^{1+it}}\right)\\&\hspace{2em}+O\left(|z|^4\sum_{p>X}\frac{(\log p)^4}{p^4}\sum_{j=1}^d|\alpha_{j,\pi}(p)|^4+|z|\sum_{p>X}\frac{\log p}{p^2}\sum_{j=1}^d|\alpha_{j,\pi}(p)|^2\right)\Bigg),
\end{align*}
where in the above estimations we  used $\log(1-x)\leq -x$ for all $0<x<1$. In order to estimate this expression, we need to first consider the sum $\displaystyle{\sum_{p>X}\frac{p}{2(p+1)}(\log p)^2\Re^2\left(\frac{\overline{z}a_{\pi}(p)}{p^{1+it}}\right)}$. To simplify exposition, we will consider instead the sum $\displaystyle{\sum_{p>X}\frac{(\log p)^2}{p^2}\Re^2\left(\frac{\overline{z}a_{\pi}(p)}{p^{it}}\right)}$. Observe that
\begin{align*}\sum_{p>X}\frac{(\log p)^2}{p^2}\Re^2\left(\frac{\overline{z}a_{\pi}(p)}{p^{it}}\right)&=\frac12|z|^2\sum_{p>X}\frac{(\log p)^2}{p^2}|a_\pi(p)|^2\\&\hspace{2em}+\frac{1}{4}\overline{z}^2\sum_{p>X}\frac{(\log p)^2}{p^{2+2it}}a_{\pi}(p)^2+\frac14z^2\sum_{p>X}\frac{(\log p)^2}{p^{2-2it}}\overline{a_\pi(p)}^2. \end{align*}
Note that by Lemma \ref{lem:pnt1H} we have \[\frac12|z|^2\sum_{p>X}\frac{(\log p)^2}{p^2}|a_\pi(p)|^2=\frac12|z|^2\left(\frac{\log X}{X}\right)+O\left(\frac{|z|^2}{X}\right).\] If $\pi\not\cong\tilde{\pi}\otimes\left|\mathrm{det}\right|^{i\tau}$ for any $\tau\in\mathbb{R}$, then  Lemma \ref{lem:pnt2H} gives \[\frac{1}{4}\overline{z}^2\sum_{p>X}\frac{(\log p)^2}{p^{2+2it}}a_{\pi}(p)^2+\frac14z^2\sum_{p>X}\frac{(\log p)^2}{p^{2-2it}}\overline{a_\pi(p)}^2\ll |z|^2\frac{(\log X)^{\alpha}}{X},\] for some $0<\alpha<1$, and so
\[\sum_{p>X}\frac{(\log p)^2}{p^2}\Re^2\left(\frac{\overline{z}a_{\pi}(p)}{p^{it}}\right)\asymp|z|^2\frac{\log X}{X}.\]
If $\pi\cong\tilde{\pi}\otimes\left|\mathrm{det}\right|^{i\tau_0}$ for some $\tau_0\in\mathbb{R}$, then Lemma \ref{lem:pnt2H} gives \begin{align*}\frac{1}{4}\overline{z}^2\sum_{p>X}\frac{(\log p)^2}{p^{2+2it}}a_{\pi}(p)^2+\frac14z^2\sum_{p>X}\frac{(\log p)^2}{p^{2-2it}}\overline{a_\pi(p)}^2&= \frac{1}{4}\overline{z}^2\left(\frac{\log X}{(1-2it+i\tau_0)X^{1-2it+i\tau_0}}\right)\\&\hspace{2em}+\frac{1}{4}z^2\left(\frac{\log X}{(1+2it-i\tau_0)X^{1+2it-i\tau_0}}\right)+O\left(\frac{|z|^2}{X}\right).\end{align*} 
In this case, we get 
\begin{align*}
\sum_{p>X}\frac{(\log p)^2}{p^2}\Re^2\left(\frac{\overline{z}a_{\pi}(p)}{p^{it}}\right)&=\frac{1}{2}|z|^2\frac{\log X}{X}\left(1+\Re\left(\frac{\overline{z}}{z}\cdot\frac{X^{2it-i\tau_0}}{1-2it+i\tau_0}\right)\right) +O\left(\frac{|z|^2}{X}\right).
\end{align*}
Note that $1+\Re\left(\frac{\overline{z}}{z}\cdot\frac{X^{2it-i\tau_0}}{1-2it+i\tau_0}\right)\asymp 1$ since $\tau_0\neq 2t$ (notice that if $\tau_0=2t$, then the distribution is 1-dimensional). Hence, in all cases, we have \begin{equation}\label{eqn:decay1}\begin{split}\sum_{p>X}\frac{(\log p)^2}{p^2}\Re^2\left(\frac{\overline{z}a_{\pi}(p)}{p^{it}}\right)& \asymp|z|^2\frac{\log X}{X}. \end{split}\end{equation}
Next, we note that
\begin{equation}\label{eqn:decay2}
\begin{split}
|z|^4\sum_{p>X}\frac{(\log p)^4}{p^4}\sum_{j=1}^d|\alpha_{j,\pi}(p)|^4&\ll |z|^4\sum_{p>X}\frac{(\log p)^4}{p^{4-4\theta}}\ll |z|^4 \frac{(\log X)^3}{X^{3-4\theta}}
\end{split}
\end{equation}
and 
\begin{equation}\label{eqn:decay3}
\begin{split}
|z|\sum_{p>X}\frac{\log p}{p^2}\sum_{j=1}^d|\alpha_{j,\pi}(p)|^2\ll |z|\sum_{p>X}\frac{(\log p)^4}{p^{2-2\theta}}\ll \frac{|z|}{X^{1-2\theta}}.
\end{split}
\end{equation}
Thus, choosing $X=|z|^{\frac{2+2\epsilon}{2-4\theta}}$  in \eqref{eqn:decay1}, \eqref{eqn:decay2} an \eqref{eqn:decay3} guarantees that

\[\prod_{p}M_{p}(u,v)\leq\prod_{p>X}M_p(u,v)\ll \exp(-C_{\epsilon}|z|^{\frac{2-8\theta-2\epsilon}{2-4\theta}}),\] for some positive constant $C_{\epsilon}$.  Since $0\leq \theta <\frac 1 4$, we have the desired result.
\end{proof}

\section{Proof of Theorem \ref{main}}\label{proof-main}

In this section we prove Theorem \ref{main} in the case when the distributions are 2-dimensional. Our proof of Theorem \ref{main} follows from Theorem \ref{FourierTransform} by an application of a 2-dimensional version of Berry-Esseen inequality  which we state in Proposition \ref{barry-esseen}. 
The proof of Theorem \ref{main} in the 1-dimensional case follows the same argument and uses the more common 1-dimensional version of Berry-Esseen inequality (see for example \cite{loeve}). We denote by $\mathcal{B}(S)$ the collection of the Borel sets of a topological space $S$.

\begin{prop}\label{barry-esseen}\cite[Theorem 1]{sadikova}
Let $\mu$ and $\nu$ be probability measures on $(\mathbb{R}^2,\mathcal{B}(\mathbb{R}^2))$ with distribution functions
\[F(x,y)=\mu\left((-\infty,x]\times(-\infty,y]\right)\quad\quad\text{and}\quad\quad G(x,y)=\nu\left((-\infty,x]\times(-\infty,y]\right).\]
Suppose that $G$ is partially differentiable, and put \[A_1=\sup_{(x,y)\in\mathbb{R}^2}G_{x}(x,y)\quad\text{ and}\quad A_2=\sup_{(x,y)\in\mathbb{R}^2}G_{y}(x,y).\] Denote by $f$ and $g$ the characteristic functions associated with $\mu$ and $\nu$ respectively. Let $\widehat{f}(u,v)=f(u,v)-f(u,0)f(0,v)$ and $\widehat{g}(u,v)=g(u,v)-g(u,0)g(0,v)$. Then we have
\begin{align*}\sup_{(x,y)\in\mathbb{R}^2}|F(x,y)-G(x,y)|&\leq \frac{2}{(2\pi)^2}\int_{-R}^R\int_{-R}^R\left|\frac{\widehat{f}(u,v)-\widehat{g}(u,v)}{uv}\right|\;du\;dv\\&\hspace{2em}+ \frac{2}{\pi}\int_{-R}^R\left|\frac{f(u,0)-{g}(u,0)}{u}\right|\;du+\frac{2}{\pi}\int_{-R}^R\left|\frac{f(0,v)-{g}(0,v)}{v}\right|\;dv\\&\hspace{2em}+\left(3\sqrt{2}+4\sqrt{3}+\frac{24}{\pi}\right)\frac{2(A_1+A_2)}{R}\end{align*}
for any $R>0$.
\end{prop}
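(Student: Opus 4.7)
The plan is to adapt the classical one-dimensional Berry-Esseen/Esseen smoothing argument to the bivariate setting. The argument in one dimension combines a smoothing inequality (replacing $F$ by its convolution with a kernel of compactly supported Fourier transform) with Fourier inversion (which expresses the smoothed difference as an integral of $(f-g)/u$). To obtain the form of Proposition \ref{barry-esseen}, one needs (i) an analogous bivariate smoothing inequality that produces the error $(A_1+A_2)/R$, and (ii) a representation of the ``joint'' part of $F-G$ in terms of $\widehat f - \widehat g$ rather than $f-g$, the point being that $1/(uv)$ is not locally integrable on the coordinate axes, whereas subtracting off the independent part removes precisely this singularity.

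Concretely, I would first write the algebraic decomposition
\[F(x,y)-G(x,y)=\bigl[F(x,y)-F_1(x)F_2(y)\bigr]-\bigl[G(x,y)-G_1(x)G_2(y)\bigr]+\bigl[F_1(x)F_2(y)-G_1(x)G_2(y)\bigr],\]
where $F_1,F_2,G_1,G_2$ denote the one-dimensional marginals. A final telescoping $F_1F_2-G_1G_2=(F_1-G_1)F_2+G_1(F_2-G_2)$ reduces the last bracket to differences of marginal distribution functions, each of which is estimated by the classical one-dimensional Berry-Esseen inequality. Since the characteristic function of $F_1$ is $f(u,0)$ and that of $F_2$ is $f(0,v)$, and the marginals of $G$ have densities bounded by $A_1$ and $A_2$, this step produces exactly the two single-integral terms appearing in the conclusion, plus a boundary contribution of size $(A_1+A_2)/R$.

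For the remaining ``joint minus product of marginals'' bracket, I would smooth by convolving $F$ and $G$ with a bivariate kernel $K_R(x,y)=K_R^{(1)}(x)K_R^{(2)}(y)$ of Fejér type, whose Fourier transform is supported in $[-R,R]^2$ and equal to $1$ at the origin. Bivariate Fourier inversion then represents the smoothed version of $F(x,y)-F_1(x)F_2(y)$, minus the same for $G$, as a double integral over $[-R,R]^2$ of $e^{-iux-ivy}(\widehat f(u,v)-\widehat g(u,v))/((iu)(iv))$ against the Fejér weights; taking absolute values and using $|\text{weight}|\le 1$ yields the leading double-integral term in the conclusion. The passage from the smoothed difference back to $\sup|F-G|$ costs another $(A_1+A_2)/R$ via the standard Esseen smoothing lemma, which uses the density bounds on $G$ to control how much convolution with $K_R$ moves the distribution function.

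The main obstacle I expect is the second step: the standard bivariate Fourier inversion formula for distribution functions does not converge absolutely, so one has to carefully regularise (e.g.\ via a pre-smoothing or by inserting and then removing truncation factors) and track that the marginal terms $f(u,0)f(0,v)$ are exactly what is needed to cancel the non-integrable behaviour of $1/(uv)$ at $u=0$ and at $v=0$. The constant $3\sqrt{2}+4\sqrt{3}+24/\pi$ arises from optimising explicit constants in the Fejér-type smoothing lemma applied in each variable and combining with the two marginal smoothing contributions; I would not try to recover this constant optimally, only to verify that the method yields some absolute constant times $(A_1+A_2)/R$.
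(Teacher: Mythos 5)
The paper does not prove this proposition; it is quoted directly from Sadikova's 1966 paper and cited as \cite[Theorem 1]{sadikova}, with no internal proof to compare against. Your sketch is a faithful reconstruction of the overall structure of Sadikova's own argument: split off the ``independent part'' $F_1F_2$ (and $G_1G_2$), treat the dependence term $F-F_1F_2$ by bivariate Esseen smoothing with a product Fej\'er kernel (which works precisely because passing to $\widehat f(u,v)=f(u,v)-f(u,0)f(0,v)$ removes the non-integrable $1/(uv)$ singularity along the coordinate axes), and control the marginal piece $F_1F_2-G_1G_2$ by telescoping and the one-dimensional Esseen inequality. The observation that $A_1$ and $A_2$ bound the marginal densities (since $G_1'(x)=G_x(x,\infty)\le A_1$, similarly for $G_2$) is the correct link between the two-dimensional hypothesis and the one-dimensional step. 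The only acknowledged gap is that you do not recover the explicit constant $3\sqrt2+4\sqrt3+24/\pi$, which requires the specific choice and optimization of smoothing kernels in Sadikova's paper; since the surrounding application in Section \ref{proof-main} only uses the inequality up to an absolute constant, this omission is harmless for the purposes of the paper.
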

Recall that $\mathcal{A}(N)$ is the subset of $\mathcal{F}(N)$ for which \eqref{eqn:short} holds.  We define the probability space  $(\mu_{\mathcal{A}(N)},\mathbb{C},\mathcal{B}(\mathbb{C}))$ where the probability measure $\mu_{\mathcal{A}(N)}(A)$ is given by \[\mu_{\mathcal{A}(N)}(A)=\frac{1}{\left|\mathcal{A}(N)\right|}\sum_{D\in\mathcal{A}(N)}\mathbf{1}_{A}\left(-\frac{L'}{L}(1+it,\pi\otimes\chi_{D})\right).\]  We also define a probability space 
$(\mu_{\mathrm{rand}} ,\mathbb{C},\mathcal{B}(\mathbb{C}))$, where the probability measure $\mu_{\mathrm{rand}}$ is given by 
\[\mu_{\mathrm{rand}}(A)=\mathbb{P}\left(-\ld(1+it,\pi, \mathbb{X})\in A\right).\] 
Let $F_{\mathcal{A}(N)}(x,y)$ and $G_{\mathrm{rand}}(x,y)$ be the distribution functions associated with $\mu_{\mathcal{A}(N)}$ and $\mu_{\mathrm{rand}}$ respectively. Their characteristic functions are given by 
\[\Phi_{\mathcal{A}(N)}(u,v)=\frac{1}{|\mathcal{A}(N)|}\sum_{D\in\mathcal{A}(N)}\exp\left(iu\Re\left(-\frac{L'}{L}(1+it,\pi\otimes\chi_{D})\right)+iv\Im\left(-\frac{L'}{L}(1+it,\pi\otimes\chi_{D})\right)\right)\] and \[\Phi_{\mathrm{rand}}(u,v)=\mathbb{E}\left[\exp\left(iu\Re\left(-\ld(1+it,\pi,\mathbb{X})\right)+iv\Im\left(-\ld(1+it,\pi, \mathbb{X})\right)\right)\right].\]
It follows from Proposition \ref{exponential-decay} that the distribution function $G_{\mathrm{rand}}(x,y)$ admits a smooth density function $M_{\mathrm{rand}}(u,x)$ (see \cite[Theorem~2.1]{AH}) such that \[G_{\mathrm{rand}}(x,y)=\int_{-\infty}^x\int_{-\infty}^yM_{\mathrm{rand}}(u,v)\;du\;dv.\] Hence, $G_{\mathrm{rand}}(x,y)$ is partially differentiable. Moreover, $\displaystyle{A_1=\sup_{(x,y)\in\mathbb{R}^2}G_{x}(x,y)}$ and $\displaystyle{A_2=\sup_{(x,y)\in\mathbb{R}^2}G_{y}(x,y)}$ are finite in view of the identity \[\int\int_{\mathbb{R}^2}M_{\mathrm{rand}}(u,v)\;du\;dv=1.\] 

Now we have all the tools to prove our main theorem.

\begin{proof}[Proof of Theoem \ref{main}] Applying Proposition \ref{barry-esseen} with $\mu=\mu_{\mathcal{A}(N)}$ and $\nu=\mu_{\mathrm{rand}}$ by identifying $\mathbb{C}$ with $\mathbb{R}^2$, we get
\begin{equation}\label{eqn:barry-esseen}
\begin{split}
\sup_{(x,y)\in\mathbb{R}^2}\left|F_{\mathcal{A}(N)}(x,y)-G_{\mathrm{rand}}(x,y)\right|&\ll \int_{-R}^R\int_{-R}^R\left|\frac{\widehat{\Phi_{\mathcal{A}(N)}}(u,v)-\widehat{\Phi_{\mathrm{rand}}}(u,v)}{uv}\right|\;du\;dv\\&\hspace{2em}+ \int_{-R}^R\left|\frac{\Phi_{\mathcal{A}(N)}(u,0)-{\Phi_{\mathrm{rand}}}(u,0)}{u}\right|\;du\\&\hspace{2em}+\int_{-R}^R\left|\frac{\Phi_{\mathcal{A}(N)}(0,v)-\Phi_{\mathrm{rand}}(0,v)}{v}\right|\;dv+\frac{1}{R}
\end{split}
\end{equation}
for any $R>0$. We take $R=c_0\frac{\log N}{(\log\log N)^2}$ as in Theorem \ref{FourierTransform}. We have \begin{equation}\label{eqn:I1def}\begin{split}I_1&=\int_{-R}^R\int_{-R}^R\left|\frac{\widehat{\Phi_{\mathcal{A}(N)}}(u,v)-\widehat{\Phi_{\mathrm{rand}}}(u,v)}{uv}\right|\;du\;dv\\&=\int\int_{[-R,R]^2\setminus C(r)}\left|\frac{\widehat{\Phi_{\mathcal{A}(N)}}(u,v)-\widehat{\Phi_{\mathrm{rand}}}(u,v)}{uv}\right|\;du\;dv\\&\hspace{1em}+ \int\int_{C(r)}\left|\frac{\widehat{\Phi_{\mathcal{A}(N)}}(u,v)-\widehat{\Phi_{\mathrm{rand}}}(u,v)}{uv}\right|\;du\;dv,
\end{split}\end{equation}
where we take $r=\frac{1}{(\log N)^B}$ for some $B>1$ and $C(r)=\{(u,v)\in[-R,R]^2:|u|\leq r\;\text{or}\; |v|\leq r\}$. 

We set
\begin{align*}I_{11}&=\int\int_{[-R,R]^2\setminus C(r)}\left|\frac{\widehat{\Phi_{\mathcal{A}(N)}}(u,v)-\widehat{\Phi_{\mathrm{rand}}}(u,v)}{uv}\right|\;du\;dv.\end{align*}
Observe that \begin{align*}I_{11}&\ll \left(\log\left(\frac{R}{r}\right)\right)^2\sup_{(u,v)\in[-R,R]^2}\left|\widehat{\Phi_{\mathcal{A}(N)}}(u,v)-\widehat{\Phi_{\mathrm{rand}}}(u,v)\right|.\end{align*}
Moreover, we have
\begin{align*}&\left|\widehat{\Phi_{\mathcal{A}(N)}}(u,v)-\widehat{\Phi_{\mathrm{rand}}}(u,v)\right|\\&=\left|\Phi_{\mathcal{A}(N)}(u,v)-\Phi_{\mathcal{A}(N)}(u,0)\Phi_{\mathcal{A}(N)}(0,v)-\Phi_{\mathrm{rand}}(u,v)+\Phi_{\mathrm{rand}}(u,0)\Phi_{\mathrm{rand}}(0,v)\right|\\&\leq\left|\Phi_{\mathcal{A}(N)}(u,v)-\Phi_{\mathrm{rand}}(u,v)\right|+\left|\Phi_{\mathcal{A}(N)}(u,0)\Phi_{\mathcal{A}(N)}(0,v)-\Phi_{\mathrm{rand}}(u,0)\Phi_{\mathrm{rand}}(0,v)\right|\\&\leq\left|\Phi_{\mathcal{A}(N)}(u,v)-\Phi_{\mathrm{rand}}(u,v)\right|+\left|\Phi_{\mathcal{A}(N)}(u,0)-\Phi_{\mathrm{rand}}(u,0)\right|+\left|\Phi_{\mathcal{A}(N)}(0,v)-\Phi_{\mathrm{rand}}(0,v)\right|\\&\ll \frac{1}{(\log N)^A}, \end{align*}
where the last estimate follows from Theorem \ref{FourierTransform}. Hence, 
\begin{equation}\label{eqn:I11}I_{11}\ll\frac{1}{(\log N)^A}\left(\log\left(\frac{c_0\frac{\log N}{(\log\log N)^2}}{(\log N)^B}\right)\right)^2\ll (\log N)^{-A}(\log\log N)^2.\end{equation}
Next we set
\begin{align*}
I_{12}&=\int\int_{C(r)}\left|\frac{\widehat{\Phi_{\mathcal{A}(N)}}(u,v)-\widehat{\Phi_{\mathrm{rand}}}(u,v)}{uv}\right|\;du\;dv
\end{align*}
in \eqref{eqn:I1def}. We have 
\begin{align*}
\widehat{\Phi_{\mathcal{A}(N)}}(u,v)&=(\Phi_{\mathcal{A}(N)}(u,v)-\Phi_{\mathcal{A}(N)}(u,0)-\Phi_{\mathcal{A}(N)}(0,v)+1)-(\Phi_{\mathcal{A}(N)}(u,0)-1)(\Phi_{\mathcal{A}(N)}(0,v)-1)\\&=\int\int_{\mathbb{R}^2}(e^{ixu}-1)(e^{iyv}-1)\;d\mu_{\mathcal{A}(N)}(x,y)\\&\hspace{2em}-\left(\int\int_{\mathbb{R}^2}(e^{ixu}-1)\;d\mu_{\mathcal{A}(N)}(x,y)\right)\left(\int\int_{\mathbb{R}^2}(e^{iyv}-1)\;d\mu_{\mathcal{A}(N)}(x,y)\right).
\end{align*}
Notice that $e^{i\theta}-1\ll|\theta|$ for any $\theta\in\mathbb{R}$. Thus,
\begin{align*}
\widehat{\Phi_{\mathcal{A}(N)}}(u,v)&\ll |uv|\int\int_{\mathbb{R}^2}|xy|\;d\mu_{\mathcal{A}(N)}(x,y)+|uv|\left(\int\int_{\mathbb{R}^2}|x|\;d\mu_{\mathcal{A}(N)}(x,y)\right)\left(\int\int_{\mathbb{R}^2}|y|\;d\mu_{\mathcal{A}(N)}(x,y)\right)\\&\ll |uv|\int\int_{\mathbb{R}^2}(x^2+y^2)\;d\mu_{\mathcal{A}(N)}(x,y)+|uv|\left(\frac{1}{|\mathcal{A}(N)|}\sum_{D\in\mathcal{A}(N)}\left|\frac{L'}{L}(1+it,\pi\otimes \chi_D)\right|^2\right)\\&\ll |uv|,
\end{align*}
where the last estimate follows from Lemma \ref{2ndmoment}. Similarly, we have $\Phi_{\mathrm{rand}}(u,v)\ll |uv|$. It follows that \begin{equation}\label{eqn:I12}I_{12}\ll \mathrm{meas}(C(r))\ll rR=(\log N)^{-B}\frac{\log N}{(\log\log N)^2}\ll (\log N)^{-B+1}.\end{equation} Hence, by \eqref{eqn:I11} and \eqref{eqn:I12}, we have 
\begin{equation}\label{eqn:I1}I_1\ll\max((\log N)^{-B+1},(\log N)^{-A}(\log\log N)^2).\end{equation}

Let us now estimate 
\begin{align*}
I_2&=\int_{-R}^R\left|\frac{\Phi_{\mathcal{A}(N)}(u,0)-{\Phi_{\mathrm{rand}}}(u,0)}{u}\right|\;du.
\end{align*}
We have 
\begin{align*}
I_2&=\int_{-R}^{-r}\left|\frac{\Phi_{\mathcal{A}(N)}(u,0)-{\Phi_{\mathrm{rand}}}(u,0)}{u}\right|\;du +\int_{r}^R\left|\frac{\Phi_{\mathcal{A}(N)}(u,0)-{\Phi_{\mathrm{rand}}}(u,0)}{u}\right|\;du\\&\hspace{2em}+\int_{-r}^r\left|\frac{\Phi_{\mathcal{A}(N)}(u,0)-{\Phi_{\mathrm{rand}}}(u,0)}{u}\right|\;du.
\end{align*}
Observe that 
\begin{align*}
&\int_{-R}^{-r}\left|\frac{\Phi_{\mathcal{A}(N)}(u,0)-{\Phi_{\mathrm{rand}}}(u,0)}{u}\right|\;du +\int_{r}^R\left|\frac{\Phi_{\mathcal{A}(N)}(u,0)-{\Phi_{\mathrm{rand}}}(u,0)}{u}\right|\;du\\&\ll \log\left(\frac{R}{r}\right)\sup_{u\in[-R,R]}\left|\Phi_{\mathcal{A}(N)}(u,0)-\Phi_{\mathrm{rand}}(u,0)\right|\\&\ll(\log N)^{-A}\log\log N.
\end{align*}
Also notice that 
\begin{align*}
\Phi_{\mathcal{A}(N)}(u,0)-{\Phi_{\mathrm{rand}}}(u,0)&=\int\int_{\mathbb{R}^2}(e^{ixu}-1)\;d\mu_{\mathcal{A}(N)}(x,y)-\int\int_{\mathbb{R}^2}(e^{ixu}-1)\;d\mu_{\mathrm{rand}}(x,y)\\&\ll |u|\left(\int\int_{\mathbb{R}^2}x^2\;d\mu_{\mathcal{A}(N)}(x,y)\right)^{\frac12}+|u|\left(\int\int_{\mathbb{R}^2}x^2\;d\mu_{\mathrm{rand}}(x,y)\right)^{\frac12}\\&\ll |u|.
\end{align*}
For the last estimate, we used the bound 
\[\frac{1}{|\mathcal{A}(N)|}\sum_{D\in\mathcal{A}(N)}\Re^{2}\left(\frac{L'}{L}(1+it,\pi\otimes\chi_{D})\right)\ll 1,\] which follows from Lemma \ref{2ndmoment}. Therefore, 
\begin{align*}
\int_{-r}^r\left|\frac{\Phi_{\mathcal{A}(N)}(u,0)-{\Phi_{\mathrm{rand}}}(u,0)}{u}\right|\;du\ll r=(\log N)^{-B},
\end{align*}
and so \begin{equation}\label{eqn:I2}I_2\ll \max((\log N)^{-B},(\log N)^{-A}\log\log N).\end{equation} 

Similarly, we have  \begin{equation}\label{eqn:I3}I_3\ll \max((\log N)^{-B},(\log N)^{-A}\log\log N).\end{equation}

Taking $A=B=2$ and applying the estimates \eqref{eqn:I1}, \eqref{eqn:I2}, and \eqref{eqn:I3}  in  \eqref{eqn:barry-esseen} give
\[\sup_{(x,y)\in\mathbb{R}^2}|F_{\mathcal{A}(N)}(x,y)-G_{\mathrm{rand}}(x,y)|\ll \frac{(\log\log N)^2}{\log N}.\]

Next let $\mathcal{R}=[a,b]\times[c,d]$, then we can write
\begin{align*}
|\mu_{\mathcal{A}(N)}(\mathcal{R})-\mu_{\mathrm{rand}}(\mathcal{R})|&\leq |F_{\mathcal{A}(N)}(b,d)-G_{\mathrm{rand}}(b,d)| -|F_{\mathcal{A}(N)}(a,d)-G_{\mathrm{rand}}(a,d)| \\&\hspace{2em}-|F_{\mathcal{A}(N)}(b,c)-G_{\mathrm{rand}}(b,c)| +|F_{\mathcal{A}(N)}(a,c)-G_{\mathrm{rand}}(a,c)|.
\end{align*}
It follows that 
\begin{equation}\label{eqn:disc-excep}
\begin{split}
\sup_{\mathcal{R}\subset \mathbb{C}}|\mu_{\mathcal{A}(N)}(\mathcal{R})-\mu_{\mathrm{rand}}(\mathcal{R})|&\ll \frac{(\log\log N)^2}{\log N},
\end{split}
\end{equation}
where $\mathcal{R}$ varies over all rectangles in $\mathbb{C}$ with sides parallel to the axes.

Finally since $\mathcal{A}(N)=\mathcal{F}(N) \setminus \mathcal{E}(N)$ and, by Theorem \ref{zero-density}, $\mathcal{E}(N) \ll N^{\frac34}$, we have 
\begin{equation}\label{eqn:main-thm-1}
\begin{split}
\sup_{\mathcal{R}\subset \mathbb{C}}|\mu_{\mathcal{F}(N)}(\mathcal{R})-\mu_{\mathrm{rand}}(\mathcal{R})|&
=\sup_{\mathcal{R}\subset \mathbb{C}}\left|\frac{1}{|\mathcal{F}(N)|}\sum_{D\in\mathcal{E}(N)}\mathbf{1}_{\mathcal{R}}\left(-\frac{L'}{L}(1+it,\pi\otimes\chi_{D})\right)\right|\\&+\sup_{\mathcal{R}\subset \mathbb{C}}\left|\frac{|\mathcal{A}(N)|/|\mathcal{F}(N)|}{|\mathcal{A}(N)|}\sum_{D\in\mathcal{A}(N)}\mathbf{1}_{\mathcal{R}}\left(-\frac{L'}{L}(1+it,\pi\otimes\chi_{D})\right)-\mu_{\mathrm{rand}}(\mathcal{R})\right|
\\&\ll N^{-\frac14}+\sup_{\mathcal{R}\subset \mathbb{C}}\left|\frac{1}{|\mathcal{A}(N)|}\sum_{D\in\mathcal{A}(N)}\mathbf{1}_{\mathcal{R}}\left(-\frac{L'}{L}(1+it,\pi\otimes\chi_{D})\right)-\mu_{\mathrm{rand}}(\mathcal{R})\right|.
\end{split}
\end{equation}
Now from \eqref{eqn:disc-excep} and \eqref{eqn:main-thm-1}, we conclude that
\begin{align*}\sup_{\mathcal{R}\subset \mathbb{C}}|\mu_{\mathcal{F}(N)}(\mathcal{R})-\mu_{\mathrm{rand}}(\mathcal{R})|\ll \frac{(\log\log N)^{2}}{\log N}.
\end{align*}
\end{proof}


\section{Proof of Theorem \ref{min-values}}\label{sec:min-values}

In this section, we prove Theorem \ref{min-values}. We need the following lemmas, the second of which (Lemma \ref{lem-positivity}) asserts that the smooth density function $M_{\mathrm{rand}}$ associated with $\mu_{\mathrm{rand}}$ is positive. For the proof of Lemma \ref{lem-positivity}, we follow in parts the idea sketched in the remark on \cite[page~60]{JW}. We use the notion of convolution and infinite convolution of distribution functions, the definitions of which can be found in \cite[Sections~2,~4]{JW} or \cite[Section 2]{AH} among other references.

\begin{lem}\label{series-density}
Let $\sum x_n$ be a conditionally convergent series of real numbers. Let $\alpha$ and $\epsilon>0$ be fixed. Then there exists a series $\sum y_n$, where $y_n\in\{0,x_n,-x_n\}$ such that \[|\sum y_n-\alpha|<\epsilon.\]
\end{lem}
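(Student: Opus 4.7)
The plan is to establish this by a greedy selection algorithm, a mild variant of the construction behind the Riemann rearrangement theorem. The hypothesis that $\sum x_n$ is \emph{conditionally} convergent will enter through its two standard consequences: $x_n \to 0$, and $\sum_n |x_n| = \infty$. The extra flexibility here, relative to Riemann's setup, is that we are allowed to drop terms (setting $y_n = 0$) rather than being forced to use each term exactly once; this only makes the target easier to hit, and in fact I would aim to prove the stronger statement that $\sum y_n = \alpha$ exactly.

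First I would define $(y_n)$ inductively via a greedy rule. Writing $S_{n-1} := y_1 + \cdots + y_{n-1}$ with $S_0 := 0$, set $y_n := |x_n|$ if $S_{n-1} < \alpha$ and $y_n := -|x_n|$ if $S_{n-1} \ge \alpha$. Since $|x_n| \in \{x_n, -x_n\}$ (with equality to $0$ precisely when $x_n = 0$), the required condition $y_n \in \{0, x_n, -x_n\}$ is automatic, and one has $|y_n| \le |x_n|$. By design, each step pushes the partial sum toward $\alpha$.

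I would then prove $S_n \to \alpha$. Fix $\delta > 0$ and choose $N$ so that $|x_n| < \delta$ for all $n \ge N$. The argument has two parts. \emph{Trapping:} if $|S_m - \alpha| < \delta$ for some $m \ge N$, then $|S_n - \alpha| < \delta$ for every $n \ge m$; indeed, if $S_m \ge \alpha$ then the greedy rule yields $S_{m+1} = S_m - |x_{m+1}| \in [\alpha - \delta,\, \alpha + \delta)$, and one iterates (the other case is symmetric). \emph{Reaching the trap:} $S_n$ must enter $(\alpha - \delta, \alpha + \delta)$ at some $n \ge N$. For suppose instead that $S_n \ge \alpha + \delta$ for all $n \ge N' \ge N$; then the greedy rule forces $y_n = -|x_n|$ for every such $n$, whence
\[
S_n = S_{N'} - \sum_{j = N'+1}^{n} |x_j| \longrightarrow -\infty
\]
by $\sum |x_j| = \infty$, contradicting $S_n \ge \alpha + \delta$. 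The symmetric case $S_n \le \alpha - \delta$ is handled the same way, and jumping across the interval $(\alpha - \delta, \alpha + \delta)$ in a single step is ruled out by $|S_n - S_{n-1}| = |y_n| \le |x_n| < \delta$. Combining the two parts gives $|S_n - \alpha| < \delta$ for all sufficiently large $n$; since $\delta > 0$ was arbitrary, $S_n \to \alpha$.

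The only step where real work is required is \emph{Reaching the trap}, and this is precisely where the divergence $\sum_n |x_n| = \infty$, i.e.\ conditional rather than absolute convergence of $\sum x_n$, is used essentially. The remainder is a straightforward bookkeeping exercise in the greedy rule, and if one is content with the weaker bound $|\sum y_n - \alpha| < \epsilon$ actually stated, then running the argument once with $\delta = \epsilon$ suffices, without any iterative sharpening in $\delta$.
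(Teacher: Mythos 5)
Your proof is correct, and in fact establishes the stronger conclusion $\sum y_n = \alpha$ exactly. The approach is genuinely different from the paper's. You run a dynamic greedy rule (choose $y_n = \pm |x_n|$ according to which side of $\alpha$ the current partial sum sits on), and prove convergence via a trapping argument plus the observation that one cannot stay on one side of $\alpha$ forever because $\sum|x_n|$ diverges. The paper instead gives a static, three-block construction: it zeroes out an initial segment, chooses a middle block $N\le n\le k$ on which it sets $y_n=|x_n|$ and which is sized so that $\sum_{n=N}^{k}|x_n|$ lands within $\epsilon/2$ of $\alpha$ (possible since $\sum|x_n|$ diverges and $|x_n|\to 0$), and then leaves the tail $n>k$ unchanged, $y_n=x_n$, which contributes at most $\epsilon/2$ by the convergence of $\sum x_n$. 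One noteworthy contrast: the paper's proof invokes the convergence of $\sum x_n$ (to control the tail block), whereas your argument uses only $x_n\to 0$ and $\sum|x_n|=\infty$ and never needs $\sum x_n$ to converge at all; so your version proves the statement under a formally weaker hypothesis. The price you pay is a slightly longer bookkeeping argument (trapping plus reaching the trap), versus the paper's one-shot choice of $N$ and $k$. One small imprecision at the very end: running the argument once with $\delta=\epsilon$ shows $|S_n - \alpha|<\epsilon$ for large $n$, hence $|\sum y_n - \alpha|\le\epsilon$ with a non-strict inequality; but since your greedy sequence is fixed and independent of $\delta$, the full argument already gives $\sum y_n=\alpha$, so the stated strict bound is recovered anyway.
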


\begin{proof} Let $\alpha>0$ and $0<\epsilon<2\alpha$ be given. By our assumptions, there exist positive integers $k$ and $N$ large enough such that \[\alpha-\frac{\epsilon}{2}\leq \sum_{n=N}^{k} |x_n|\leq \alpha+\frac{\epsilon}{2}\] and 
\[-\frac{\epsilon}{2}\leq\sum_{n=k+1}^{\infty}x_n\leq \frac{\epsilon}{2}.\] We get the desired result by choosing \[y_n=\begin{cases}0&\text{if}\; 1\leq n\leq N-1,\\
|x_n|&\text{if}\; N\leq n\leq k,\\
x_n & \text{if}\; n\geq k+1.\end{cases}\]
If $\alpha<0$, we get the result by applying the previous argument to $-\alpha$. Finally, if $\alpha=0$ we set \[y_n=\begin{cases}0&\text{if}\; 1\leq n\leq N-1,\\
x_n & \text{if}\; n\geq N,\end{cases}\] where $N$ is chosen so that $-\epsilon\leq\sum_{n=N}^{\infty}x_n\leq \epsilon.$
\end{proof}

\begin{lem}\label{lem-positivity} Let $\mathbb{X}_p$ be a sequence of discrete random variables such that 
\[\mathbb{P}(\mathbb{X}_p=a)=\begin{cases}a_p&\text{if}\; a=\pm1,\\b_p&\text{if}\; a=0,\end{cases}\]
where $0<a_p,b_p<1$ and $2a_p+b_p=1$. Let $f(z)=\sum_{k=1}^{\infty}l_kz^k$ be an analytic function in a disk $|z|<\rho$. Consider the random series \[\sum_{p}\lambda_pf(r_p\mathbb{X}_p),\] where the sequences $\{\lambda_p\}$ and $\{r_p\}$ are such that $0<|r_p|<\rho$ and $\lambda_pf(r_p\mathbb{X}_p)\in\mathbb{R}$. Assume that this random series is almost surely convergent with distribution function $F$. 
In addition, assume that \[F=*_pF_p=\left(*_{p\in P_1}F_p\right)\left(*_{p\in P_2}F_p\right)\] is the infinite convolution of the distribution functions attached to the random variables $\lambda_pf(r_p\mathbb{X}_p)$, and $P_1$ and $P_2$ are two disjoint subsets of primes such that $P_1\cup P_2$ is the set of all prime numbers. Assume that $F$, $*_{p\in P_1}F_p$, and $*_{p\in P_2}F_p$ are absolutely continuous  with continuous density functions $h$, $h_1$ and $h_2$ respectively. If $\sum_{p}\sum_{k=1}^{\infty}|l_k\lambda_pr_p^k|$ is divergent, then $h(x)>0$ for all $x\in\mathbb{R}$.\end{lem}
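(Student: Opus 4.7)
The plan is to derive positivity of $h$ from the convolution $h = h_1 \ast h_2$ by reducing to the claim $\mathrm{supp}(F_1) = \mathbb{R}$, and then verifying that claim through Lemma \ref{series-density}.

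For any $x \in \mathbb{R}$ we have
\[
h(x) = \int_{-\infty}^{\infty} h_1(y)\, h_2(x-y)\, dy,
\]
so if some $y_0$ satisfies $h_1(y_0) > 0$ and $h_2(x - y_0) > 0$ then, by continuity of both densities, the integrand is uniformly bounded below by a positive constant on a common neighborhood of $y_0$, whence $h(x) > 0$. Since $h_2$ is a continuous probability density, it is strictly positive on some nonempty open interval $(a,b)$, so it suffices to show that $\{h_1 > 0\}$ is dense in $\mathbb{R}$, equivalently $\mathrm{supp}(F_1) = \mathbb{R}$. The hypothesis $\sum_p\sum_k |l_k \lambda_p r_p^k| = \infty$ must hold when restricted to at least one of $P_1$ or $P_2$; after swapping their roles if necessary, I assume it holds on $P_1$.

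To prove $\mathrm{supp}(F_1) = \mathbb{R}$, I use the standard fact that, for an almost surely convergent sum of independent finitely-supported random variables, $\mathrm{supp}(F_1)$ equals the closure of the set of convergent realizations $\sum_{p \in P_1} z_p$ with $z_p \in \{0,\, \lambda_p f(r_p),\, \lambda_p f(-r_p)\}$. Given $x \in \mathbb{R}$ and $\epsilon > 0$, I therefore aim to construct $\xi_p \in \{-1, 0, 1\}$ so that $\sum_{p \in P_1} \lambda_p f(r_p \xi_p)$ converges to within $\epsilon$ of $x$. Splitting $f = f_{\mathrm{odd}} + f_{\mathrm{even}}$ into its odd and even Taylor parts, and using $\xi_p^{2m+1} = \xi_p$, $\xi_p^{2m} = \xi_p^2$ for $\xi_p \in \{-1, 0, 1\}$, one has
\[
\lambda_p f(r_p \xi_p) = \xi_p\, \alpha_p + \xi_p^2\, \beta_p, \qquad \alpha_p := \lambda_p f_{\mathrm{odd}}(r_p),\quad \beta_p := \lambda_p f_{\mathrm{even}}(r_p),
\]
and the divergence hypothesis yields $\sum_{p \in P_1}(|\alpha_p| + |\beta_p|) = \infty$. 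I then partition $P_1$ into two disjoint infinite subsets so that the divergence is inherited on one of them; on that subset Lemma \ref{series-density} is applied, with sign choices $\xi_p \in \{-1, 1\}$ if $\sum |\alpha_p|$ carries the divergence, or with indicator choices $\xi_p \in \{0, 1\}$ if $\sum |\beta_p|$ does, to realize the $\epsilon$-approximation; the remaining primes are used to absorb the fixed constant contributed by the convergent part.

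The main obstacle is the coupling between the sign of $\xi_p$ and the option $\xi_p = 0$: at a single prime, the odd and even contributions cannot be tuned independently. The partition of $P_1$ into two halves decouples these degrees of freedom, with one half carrying the divergence and used to build the approximation via Lemma \ref{series-density}, and the other contributing a controlled constant. The most delicate technical point is extracting from the divergence of $\sum_p |\lambda_p|\sum_k |l_k r_p^k|$ a conditionally convergent subseries to which Lemma \ref{series-density} applies verbatim; this is achieved by combining the divergence with the almost sure convergence of the full random series, which constrains the sign patterns of the summands and ensures the rearrangement is compatible with admissible realizations.
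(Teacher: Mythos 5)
Your overall plan matches the paper's: reduce to showing $\mathrm{supp}(*_{p\in P_1}F_p)=\mathbb{R}$ via Lemma \ref{series-density}, then use the convolution $h=h_1\ast h_2$ together with continuity of $h_1,h_2$ to conclude $h>0$. The paper also first establishes this density of realizations, then applies \cite[Proposition~B.10.8]{kowalski} to get full support, and finishes with the same convolution argument.

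Where you go beyond the paper is in noticing a genuine subtlety that the paper's proof passes over in silence: the admissible values of $\lambda_p f(r_p\xi_p)$ for $\xi_p\in\{-1,0,1\}$ are $\{0,\ \lambda_pf(r_p),\ \lambda_pf(-r_p)\}$, which are not of the form $\{0,x_p,-x_p\}$ unless $f$ is odd, so Lemma \ref{series-density} does not apply verbatim. The paper simply cites the lemma without addressing this; your decomposition $\lambda_pf(r_p\xi_p)=\xi_p\alpha_p+\xi_p^2\beta_p$ is the right way to frame the obstruction. However, your fix is not yet a proof. First, the step ``the divergence hypothesis yields $\sum_{p\in P_1}(|\alpha_p|+|\beta_p|)=\infty$'' does not follow: by the triangle inequality $|\alpha_p|+|\beta_p|\le |\lambda_p|\sum_k|l_kr_p^k|$, so the hypothesis $\sum_p|\lambda_p|\sum_k|l_kr_p^k|=\infty$ controls the wrong side, and cancellation inside $f_{\mathrm{odd}}(r_p)$ and $f_{\mathrm{even}}(r_p)$ could in principle make $\sum(|\alpha_p|+|\beta_p|)$ converge (in the intended application $r_p\to 0$ and $l_1\neq 0$ prevents this, but that information is not in your argument). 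Second, the closing paragraph, which carries the heaviest burden of the proof (decoupling the sign choice from the on/off choice, and producing a conditionally convergent subseries to feed Lemma \ref{series-density}), is stated as a hope rather than an argument: ``combining the divergence with the almost sure convergence \ldots ensures the rearrangement is compatible'' does not identify the mechanism. You would need to invoke, e.g., the three-series theorem to extract from a.s.\ convergence that $\sum\beta_p$ converges and $\sum\alpha_p^2<\infty$, and then argue that the $\beta_p$-contribution is an absolutely summable perturbation that Lemma \ref{series-density} can absorb. As written, both the paper's proof and yours leave a gap at this point; yours has the virtue of naming the gap but also introduces an incorrect inequality in trying to fill it.
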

\begin{remark}
Note that a sufficient condition for the divergence of $\sum_{p,k\geq1}|l_k\lambda_pr_p^k|$ is that $l_1\neq0$, $\sum_{p}|
\lambda_pr_p|=\infty$, and $\sum_{p,k\geq2}|l_k\lambda_pr_p^k|<\infty$.
\end{remark}
\begin{proof}[Proof of Lemma \ref{lem-positivity}]
Since $\sum_{p,k\geq1}|l_k\lambda_pr_p^k|$ is divergent and $\sum_p\lambda_pf(r_p\mathbb{X}_p)$ is almost surely convergent, then \[\left\{\sum_p\lambda_pf(r_p\mathbb{X}_p)<\infty,\; \mathbb{X}_p\in\{-1,0,1\}\right\}\] is dense in $\mathbb{R}$ by Lemma \ref{series-density}. Thus, by \cite[Proposition~B.10.8]{kowalski}, the support of $\sum_p\lambda_pf(r_p\mathbb{X}_p)$ is all of $\mathbb{R}$. 
Therefore, $\int_{a}^{b}h(x)\;dx>0$ for all $a,b\in\mathbb{R}$ with $a<b$. Hence, $h(u)$ is not identically zero on any interval $(a,b)$. Now since $\sum_{p}\sum_{k=1}^{\infty}|l_k\lambda_pr_p^k|$ is divergent, we can assume without loss of generality that $\sum_{p\in P_1}\sum_{k=1}^{\infty}|l_k\lambda_pr_p^k|$ is divergent as well. By another application of Lemma \ref{series-density} and \cite[Proposition~B.10.8]{kowalski}, we have that $h_1(u)$ is not identically zero on any interval $(a,b)$. By our assumption, we have \[h(x)=\int_{\mathbb{R}}h_{1}(x-u)h_2(u)\; du.\]
Since $\int_{\mathbb{R}} h_2(u)\;du=1$ and $h_2$ is continuous, there is an interval $(c,d)$ such that $h_2(u)>0$ on $(c,d)$. Since also $h_1(x-u)>0$ on some subinterval $(c',d')$ of $(c,d)$, then
\[h(x)=\int_{\mathbb{R}}h_1(x-u)h_2(u)\;du\geq \int_{c'}^{d'}h_1(x-u)h_2(u)\;du>0.\]
Thus, $h(x)>0$ for any $x\in\mathbb{R}$.
\end{proof}
Applying this lemma to the random series \[-\mathrm{Ld}(1+it,\pi,\mathbb{X})=\sum_{p}\log p\sum_{j=1}^d\frac{\frac{\alpha_{j,\pi}(p)}{p^{1+it}}\mathbb{X}_{p}}{1-\frac{\alpha_{j,\pi}(p)}{p^{1+it}}\mathbb{X}_{p}}\]  yields:
\begin{cor}\label{positivity} Let $t\in\mathbb{R}$ be fixed, and let $\pi$ be a cuspidal automorphic representation of $\mathrm{GL}_{d}(\mathbb{A}_{\mathbb{Q}})$ with unitary central character such that $\pi\cong\tilde{\pi}\otimes\left|\mathrm{det}\right|^{2it}$.  Then $-\mathrm{Ld}(1+it,\pi,\mathbb{X})$ has an absolutely continuous distribution with a positive density function.
\end{cor}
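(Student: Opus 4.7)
The plan is to establish the two assertions of Corollary~\ref{positivity}---absolute continuity of the distribution of $-\mathrm{Ld}(1+it,\pi,\mathbb{X})$ and positivity of its density---as separate consequences of Proposition~\ref{exponential-decay} and Lemma~\ref{lem-positivity}, respectively. Since $\pi\cong\tilde{\pi}\otimes|\mathrm{det}|^{2it}$, the observation made after Corollary~\ref{GL2case} shows that each summand $(\log p)\lambda_{\pi}(p^{k})\mathbb{X}_{p}^{k}/p^{k(1+it)}$ is real-valued, so $-\mathrm{Ld}(1+it,\pi,\mathbb{X})$ is real almost surely and its law $\mu_{\mathrm{rand}}$ is a Borel probability measure on $\mathbb{R}$.

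For absolute continuity, the one-dimensional version of Proposition~\ref{exponential-decay} (specialized to $v=0$) gives $|\Phi_{\mathrm{rand}}(u,0)|\ll \exp(-C_{\epsilon}|u|^{a_{\epsilon}})$ for positive constants $C_{\epsilon},a_{\epsilon}$. This exponential decay furnishes, via \cite[Theorem~2.1]{AH}, the absolute continuity of $\mu_{\mathrm{rand}}$ with respect to Lebesgue measure together with a smooth density $M_{\mathrm{rand}}$ on $\mathbb{R}$. The identical argument, applied to the Euler sub-product indexed by any subset $P$ of primes of positive relative density, produces a continuous density for the corresponding partial convolution $*_{p\in P}F_{p}$, which will be needed when invoking Lemma~\ref{lem-positivity}.

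For positivity, we apply Lemma~\ref{lem-positivity} with $f(z)=z/(1-z)=\sum_{k\geq 1}z^{k}$ (so every $l_{k}=1$ and in particular $l_{1}\neq 0$), $\lambda_{p}=\log p$, and Satake data encoded via
\[
-\mathrm{Ld}(1+it,\pi,\mathbb{X})=\sum_{p}\log p\sum_{j=1}^{d}f\!\left(\frac{\alpha_{j,\pi}(p)}{p^{1+it}}\mathbb{X}_{p}\right).
\]
The remark following Lemma~\ref{lem-positivity} reduces the required divergence of $\sum_{p,k\geq 1}|l_{k}\lambda_{p}r_{p}^{k}|$ to two conditions: (i) absolute convergence of the tail $\sum_{p,k\geq 2}(\log p)|r_{p}|^{k}$, which is immediate from $|\alpha_{j,\pi}(p)|\leq p^{\theta}$ with $\theta<\tfrac14$; and (ii) divergence of the leading sum $\sum_{p}(\log p)\sum_{j}|\alpha_{j,\pi}(p)|/p$, which follows from the Rankin--Selberg PNT $\theta_{\pi\times\tilde{\pi}}(u)\sim u$ of Theorem~\ref{thm-PNT}, combined with the inequality $\sum_{j}|\alpha_{j,\pi}(p)|\geq\sqrt{\lambda_{\pi\times\tilde{\pi}}(p)}$ and a restriction to a positive-density subset of primes on which $\lambda_{\pi\times\tilde{\pi}}(p)$ is bounded below. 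Partitioning the primes into any disjoint union $P_{1}\sqcup P_{2}$ on which both partial convolutions inherit continuous densities from the absolute-continuity step, Lemma~\ref{lem-positivity} concludes $M_{\mathrm{rand}}(x)>0$ for every $x\in\mathbb{R}$.

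The main obstacle is isolating the positive-density subset of primes required in condition (ii): the pointwise bound $|\lambda_{\pi}(p)|\leq dp^{\theta}$ with $\theta>0$ is too weak to deduce the divergence mechanically from $\sum_{p}(\log p)|\lambda_{\pi}(p)|^{2}/p\asymp\log x$ (itself obtained from Theorem~\ref{thm-PNT} with $\pi'=\pi$), so one must argue directly that $\lambda_{\pi\times\tilde{\pi}}(p)\asymp 1$ on a set of primes of positive density via a Chebyshev-type pigeonhole on the Rankin--Selberg PNT. A secondary point is that Lemma~\ref{lem-positivity} is stated for a single analytic $f$, while our summand genuinely depends on $p$ through the Satake parameters; however, the proof of that lemma---whose engine is the prime-by-prime rearrangement argument of Lemma~\ref{series-density}---extends transparently to this mildly generalized setting.
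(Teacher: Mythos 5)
You take the same route as the paper: apply Lemma \ref{lem-positivity} to the product form of $-\mathrm{Ld}(1+it,\pi,\mathbb{X})$, with absolute continuity of the relevant (partial) laws supplied by the exponential decay of Proposition \ref{exponential-decay} and \cite[Theorem~2.1]{AH}. Your observation that Lemma \ref{lem-positivity} as stated handles a single ratio $r_p$ per prime, whereas the Satake data contributes $d$ ratios per prime (so the $p$-th summand is really $\log p\sum_j f(r_{p,j}\mathbb{X}_p)$, with power-series coefficient $\lambda_\pi(p^k)/p^{k(1+it)}$ in $\mathbb{X}_p^k$), is correct and is elided in the paper's one-sentence proof; the rearrangement mechanism of Lemma \ref{series-density} does carry over as you say.

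The genuine gap is in your verification of the divergence hypothesis. After the reduction supplied by the remark following Lemma \ref{lem-positivity}, what is needed is $\sum_p(\log p)|\lambda_\pi(p)|/p=\infty$ (the $k=1$ part; note $\sqrt{\lambda_{\pi\times\tilde\pi}(p)}=|\lambda_\pi(p)|$ at unramified $p$, so your $\sum_j|\alpha_{j,\pi}(p)|$ lower bound is just an equality with $|\lambda_\pi(p)|$ and does not buy anything). You correctly flag that the pointwise bound $|\lambda_\pi(p)|\ll p^\theta$ with $\theta>0$ is too weak, but the proposed remedy — a ``Chebyshev-type pigeonhole on the Rankin--Selberg PNT'' to get $\lambda_{\pi\times\tilde\pi}(p)\asymp 1$ on a positive-density set — does not close on its own. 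The second-moment PNT $\sum_{p\le u}(\log p)|\lambda_\pi(p)|^2\sim u$ only controls the average of $\lambda_{\pi\times\tilde\pi}(p)$; to deduce a positive-density lower bound one also needs a compensating \emph{upper} bound (GRC, or a fourth-moment estimate for $\lambda_\pi(p)$), since without it the entire mass of the second moment could in principle be carried by a density-zero set of primes where $|\lambda_\pi(p)|$ is as large as $p^\theta$. In that scenario $\sum_p(\log p)|\lambda_\pi(p)|/p$ could converge, and the hypothesis of Lemma \ref{lem-positivity} would fail. Your proposal leaves this step asserted rather than proved; the paper's one-line ``Applying this lemma\ldots'' elides the same point.
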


We are now ready to prove Theorem \ref{min-values}.

\begin{proof}[Proof of Theorem \ref{min-values}]

Let $\eta=\eta(N)$ be a positive parameter which will be chosen so that $\eta(N)\to0$ as $N\to\infty$. Let \[\Psi_N(\eta)=\left|\left\{D\in\mathcal{F}(N):\left|\frac{L'}{L}(1+it,\pi\otimes\chi_{D})\right|\leq \eta\right\}\right|.\] By Theorem \ref{main}, we have \[\frac{\Psi_N(\eta)}{|\mathcal{F}(N)|}= \mu_{\mathrm{rand}}\left(\left(-\eta,\eta\right)\right)+O\left( \frac{(\log \log N)^2}{\log N} \right).\]
 Let $M_{\mathrm{rand}}$ be the smooth density function associated with $\mu_{\mathrm{rand}}$. By Corollary \ref{positivity}, we know that $M_{\mathrm{rand}}(x)>0$ for all $x\in\mathbb{R}$. It follows that
\[\mu_{\mathrm{rand}}\left(\left(-\eta,\eta\right)\right)=\int_{-\eta}^{\eta}M_{\mathrm{rand}}(x)\;dx\gg \eta.\] Choosing $\eta=C \frac{(\log \log N)^2}{\log N}$ for some large enough positive constant $C$ yields 
  \[\frac{\Psi_N(\eta)}{|\mathcal{F}(N)|}\gg \frac{(\log \log N)^2}{\log N}. \] Hence, we get $m_{N}\ll \frac{(\log \log N)^2}{\log N}$ as desired.
  \end{proof}

\begin{rezabib}

\bib{AH}{article}{
   author={Akbary, A.},
   author={Hamieh, A.},
   title={Two dimensional value-distribution of cubic Hecke $L$-functions},
   journal={Proc. Amer. Math. Soc.},
   volume={149},
   date={2021},
   number={11},
   pages={4669--4684},
}


\bib{A-S}{article}{
   author={ Avdispahi\'c, M.},
   author={Smajlovi\'c, L.},
   title={On the {S}elberg orthogonality for automorphic {$L$}-functions},
   journal={Archiv der Mathematik},
   volume={94},
   date={2010},
   pages={147--154},
}



%
%

%
\bib{BJ}{article}{
   author={Bohr, H.},
   author={Jessen, B.},
   title={\"{U}ber die Werteverteilung der {R}iemannschen Zetafunktion},
   language={German},
  journal={Acta Math.},
   volume={58},
   date={1932},
   number={1},
   pages={1--55},
}

\bib{CM04}{article}{
   author={Cogdell, J.},
   author={Michel, P.},
   title={On the complex moments of symmetric power $L$-functions at $s=1$},
   journal={Int. Math. Res. Not.},
   date={2004},
   number={31},
   pages={1561--1617},
  }

%

%
%
%
%
%
%

%
%
%
%
%
%
%
%
%

\bib{G06}{article}{
   author={Golubeva, E. P.},
   title={Distribution of the values of {H}ecke $L$-functions at the point 1},
   language={Russian, with Russian summary},
   journal={Zap. Nauchn. Sem. S.-Peterburg. Otdel. Mat. Inst. Steklov.
   (POMI)},
   volume={314},
   date={2004},
   number={Anal. Teor. Chisel i Teor. Funkts. 20},
   pages={15--32, 285},
   issn={0373-2703},
   translation={
      journal={J. Math. Sci. (N.Y.)},
      volume={133},
      date={2006},
      number={6},
      pages={1611--1621},}
     
   }

\bib{GS}{article}{
   author={Granville, A.},
   author={Soundararajan, K.},
   title={The distribution of values of $L(1,\chi_d)$},
   journal={Geom. Funct. Anal.},
   volume={13},
   date={2003},
   number={5},
   pages={992--1028},
  }
  
  \bib{Hamieh-Mcclenagan}{article}{
   author={Hamieh, A.},
   author={Mcclenagan, R.},
   title={The distribution of values of $L'/L(1/2+\epsilon,\chi_D)$},
   journal={J. Number Theory},
   volume={238},
   date={2022},
   pages={1044-1062},
}

\bib{Harman-Mat}{article}{
   author={Harman, G.},
   author={Matsumoto, K.},
   title={Discrepancy estimates for the value-distribution of the Riemann
   zeta-function. IV},
   journal={J. London Math. Soc. (2)},
   volume={50},
   date={1994},
   number={1},
   pages={17--24},
}

\bib{HM}{article} {
     author={Hattori, T.},
   author={Matsumoto, K.},
   title={A limit theorem for Bohr-Jessen's probability measures of the
   Riemann zeta-function},
   journal={J. Reine Angew. Math.},
   volume={507},
   date={1999},
   pages={219--232},
}

\bib{heath-brown}{article}{
AUTHOR = { Heath-Brown, R. D.},
     TITLE = {A mean value estimate for real character sums},
   JOURNAL = {Acta. Arith.},
    VOLUME = {72},
    NUMBER = {3},
      YEAR = {1995},
     PAGES = {235--275},
     
     }

%

		
\bib{H-T}{article}{
   author={Humphries, P.},
   author={Thorner, J.},
   title={Towards a ${\rm GL}_n$ variant of the Hoheisel phenomenon},
   journal={Trans. Amer. Math. Soc.},
   volume={375},
   date={2022},
   number={3},
   pages={1801--1824},
}

\bib{H-T1}{article}{
author={Humphries, P.},
 author={Thorner, J.},
 title={Zeros in {R}ankin-{S}elberg $L$-functions in Families},
journal={preprint},
pages={54 pages},
year={2021},	
note={\url{https://arxiv.org/abs/2103.05634}},
	}

%
%
%

\bib{IK}{book}{
   author={Iwaniec, H.},
   author={Kowalski, E.},
   title={Analytic number theory},
   series={American Mathematical Society Colloquium Publications},
   volume={53},
   publisher={American Mathematical Society, Providence, RI},
   date={2004},
   pages={xii+615},
}

\bib{K-S}{article}{author={Kim, H.},
author={Shahidi, F.},
title={Functorial products for ${\rm GL}_2\times{\rm GL}_3$ and the
symmetric cube for ${\rm GL}_2$},
note={With an appendix by C. J. Bushnell and G. Henniart},
journal={Ann. of Math. (2)},
volume={155},
date={2002},
number={3},
pages={837--893},
}
	
\bib{kowalski}{misc}{
author={Kowalski, E.},
title={Lecture Notes: An Introduction to Probabilistic Number Theory},
journal={lecture Notes},
year={ Version of March 13, 2020},
note={\url{https://people.math.ethz.ch/~kowalski/probabilistic-number-theory.pdf}},
institution={ETH Zurich},}

\bib{JW}{article}{
  author={Jessen, B.},
   author={Wintner, A.},
   title={Distribution functions and the Riemann zeta function},
   journal={Trans. Amer. Math. Soc.},
   volume={38},
   date={1935},
   number={1},
   pages={48--88},
}

\bib{K-T}{article}{
author={Kaneko, I.},
author={Thorner, J.},
title={Highly uniform prime number theorems}
journal={arXiv:2203.09515}
date={2022},
pages={14 pages}
}

\bib{lamzouri2}{article}{
   author={Lamzouri, Y.},
   title={On the distribution of extreme values of zeta and $L$-functions in
   the strip $\frac12<\sigma<1$},
   journal={Int. Math. Res. Not. IMRN},
   date={2011},
   number={23},
   pages={5449--5503},
 
}

\bib{lamzouri3}{article}{
   author={Lamzouri, Y.},
   title={The distribution of Euler-Kronecker constants of quadratic fields},
   journal={J. Math. Anal. Appl.},
   volume={432},
   date={2015},
   number={2},
   pages={632--653}
}

\bib{LL21}{article}{
  author={Lamzouri, Y.},
   author={Languasco, A.},
   title={Small Values of $|L^\prime/L(1, \chi)|$},
  journal={Experimental Mathematics},
   date={2021},
   pages={15 pages},
  
}

\bib{Lf}{article}{
  author={Lamzouri, Y.},
   author={Lester, S.},
  author={Radziwi\l \l , M.},
   title={Discrepancy bounds for the distribution of the Riemann
  zeta-function and applications},
  journal={J. Anal. Math.},
   volume={139},
   date={2019},
   number={2},
   pages={453--494},
 
}

\bib{L-Y}{article}{
   author={Liu, J.},
   author={Ye, Y.},
   title={Perron's formula and the prime number theorem for automorphic
   $L$-functions},
   journal={Pure Appl. Math. Q.},
   volume={3},
   date={2007},
   number={2, Special Issue: In honor of Leon Simon.},
   pages={481--497},
 
}

\bib{loeve}{book}{
    AUTHOR = {Lo\`eve, M.},
     TITLE = {Probability theory. {I}},
    SERIES = {Graduate Texts in Mathematics, Vol. 45},
   EDITION = {Fourth},
 PUBLISHER = {Springer-Verlag, New York-Heidelberg},
      YEAR = {1977},
     PAGES = {xvii+425},
}

%
%
%


\bib{M20}{article}{
   author={Mine, M.},
   title={Probability density functions attached to random Euler products
   for automorphic $L$-functions},
   journal={Q. J. Math.},
   volume={73},
   date={2022},
   number={2},
   pages={397--442},
}

\bib{M-M}{article}{
   author={Mourtada, M.},
   author={Murty, V. Kumar},
   title={Distribution of values of $L'/L(\sigma,\chi_D)$},
   language={English, with English and Russian summaries},
   journal={Mosc. Math. J.},
   volume={15},
  date={2015},
  number={3},
   pages={497--509, 605},
}

\bib{Perelli-Pomykala}{article}{
   author={Perelli, A.},
   author={Pomyka\l a, J.},
   title={Averages of twisted elliptic $L$-functions},
   journal={Acta Arith.},
   volume={80},
   date={1997},
   number={2},
   pages={149--163},
}
\bib{R-S}{article}{
   author={Rudnick, Z.},
   author={Sarnak, P.},
   title={Zeros of principal $L$-functions and random matrix theory},
   note={A celebration of John F. Nash, Jr.},
   journal={Duke Math. J.},
   volume={81},
   date={1996},
   number={2},
   pages={269--322},
}

\bib{sadikova}{article}{
   author={Sadikova, S. M.},
   title={On two-dimensional analogs of an inequality of {E}sseen and their application
to the central limit theorem},
  JOURNAL = {Teor. Verojatnost. i Primenen.},
  FJOURNAL = {Akademija Nauk SSSR. Teorija Verojatnoste\u{\i} i ee Primenenija},
    VOLUME = {11},
   date={1966},
   pages={369--380},
}

\bib{S}{article}{
   author={Soundararajan, K.},
  title={Moments of the Riemann zeta function},
   journal={Ann. of Math. (2)},
   volume={170},
   date={2009},
  number={2},
  pages={981--993},
  
}

\bib{S-T}{article}{
   author={Soundararajan, K.},
   author={Thorner, J.},
   title={Weak subconvexity without a Ramanujan hypothesis},
   note={With an appendix by F. Brumley},
   journal={Duke Math. J.},
   volume={168},
   date={2019},
   number={7},
   pages={1231--1268},
}

\bib{Wu-Ye}{article}{
   author={Wu, J.},
   author={Ye, Y.},
   title={Hypothesis H and the prime number theorem for automorphic
   representations},
   journal={Funct. Approx. Comment. Math.},
   volume={37},
   date={2007},
   number={part 2},
   part={part 2},
   pages={461--471},
}

\bib{XZ21}{article}{
   author={Xiao, X.},
   author={Zhai, S.},
   title={Discrepancy bounds for distribution of automorphic $L$-functions},
   journal={Lith. Math. J.},
   volume={61},
   date={2021},
   number={4},
   pages={550--563},
 
}

%
%
%
%
%

\end{rezabib}

\end{document}